\def\@fpheader{\relax}
\newcommand\arXiv[1]{\href{http://arxiv.org/abs/#1}{\nolinkurl{arXiv:#1}}}
\newcommand\MRnumber[1]{\href{http://www.ams.org/mathscinet-getitem?mr=#1}{\nolinkurl{MR#1}}}
\newcommand\DOI[1]{\href{http://dx.doi.org/#1}{\nolinkurl{DOI:#1}}}
\newcommand\myurl[1]{\href{http://#1}{\texttt{#1}}}
\newtheorem{dummy}{Dummy}[section]
\newtheorem{theorem}[dummy]{Theorem}
\newtheorem{proposition}[dummy]{Proposition}
\newtheorem{conjecture}[dummy]{Conjecture}
\newtheorem{corollary}[dummy]{Corollary}
\newtheorem{definition}[dummy]{Definition}
\newtheorem{lemma}[dummy]{Lemma}
\newtheorem{question}[dummy]{Question}
\renewcommand\mathbb\mathds
\newcommand\bB{\mathbb B}
\newcommand\bC{\mathbb C}
\newcommand\bF{\mathbb F}
\newcommand\bK{\mathbb K}
\newcommand\bQ{\mathbb Q}
\newcommand\bR{\mathbb R}
\newcommand\bX{\mathbb X}
\newcommand\bZ{\mathbb Z}
\newcommand\cC{\mathcal C}
\newcommand\cD{\mathcal D}
\newcommand\cE{\mathcal E}
\newcommand\cF{\mathcal F}
\newcommand\cH{\mathcal H}
\newcommand\cJ{\mathcal J}
\newcommand\cM{\mathcal M}
\newcommand\cN{\mathcal N}
\newcommand\cO{\mathcal O}
\newcommand\cP{\mathcal P}
\newcommand\cS{\mathcal S}
\newcommand\cV{\mathcal V}
\newcommand\cX{\mathcal X}
\newcommand\rA{\mathrm A}
\newcommand\rB{\mathrm B}
\newcommand\rH{\mathrm H}
\newcommand\rJ{\mathrm J}
\newcommand\rL{\mathrm L}
\newcommand\rM{\mathrm M}
\newcommand\rO{\mathrm O}
\newcommand\rU{\mathrm U}
\DeclareMathOperator\homology{H}
\renewcommand\H{\homology}
\renewcommand\d{\mathrm d}
\newcommand\MF{\mathrm{MF}}
\newcommand\TMF{\mathrm{TMF}}
\newcommand\Tmf{\mathrm{Tmf}}
\newcommand\tmf{\mathrm{tmf}}
\newcommand\mf{\mathrm{mf}}
\newcommand\ku{\mathrm{ku}}
\newcommand\SQM{\mathrm{SQM}}
\newcommand\pt{\mathrm{pt}}
\newcommand\GL{\mathrm{GL}}
\newcommand\SL{\mathrm{SL}}
\DeclareMathOperator\SH{SH}
\DeclareMathOperator\tr{tr}
\newcommand\longto\longrightarrow
\newcommand\mono\hookrightarrow
\newcommand\epi\twoheadrightarrow
\newcommand\isom{\overset\sim\to}
\newcommand\<\langle
\renewcommand\>\rangle
\newcommand\sminus\smallsetminus
\DeclareMathOperator\Fer{Fer}
\DeclareMathOperator\im{im}
\DeclareMathOperator\Sq{Sq}
\DeclareMathOperator\Aut{Aut}
\DeclareMathOperator\SQFT{SQFT}
\newcommand\Perm{\mathbf{24}}
\newcommand\Co{\mathrm{Co}}
\newcommand\SO{\mathrm{SO}}
\newcommand\Spin{\mathrm{Spin}}
\newcommand\define[1]{\emph{#1}}
\newcommand\cat[1]{\mathbf{#1}}
\title{Topological Mathieu Moonshine}
\author{Theo Johnson-Freyd}
\affiliation{Department of Mathematics, Dalhousie University, Halifax, NS, CANADA \\Perimeter Institute for Theoretical Physics, Waterloo, ON, CANADA}
\emailAdd{tjohnsonfreyd@perimeterinstitute.ca}
\abstract{We explore the Atiyah--Hirzebruch spectral sequence for the  $\operatorname{tmf}^\bullet[\frac12]$-cohomology of the classifying space $B\mathrm{M}_{24}$ of the largest Mathieu group $\mathrm{M}_{24}$, twisted by a class $\omega \in \mathrm{H}^4(B\mathrm{M}_{24};\mathbb{Z}[\frac12]) \cong \bZ_3$. Our exploration includes detailed computations of the $\mathbb{F}_3$-cohomology of $\mathrm{M}_{24}$ and of the first few differentials in the AHSS. We are specifically interested in the value of $\operatorname{tmf}^\bullet_\omega(B\mathrm{M}_{24})[\frac12]$ in cohomological degree~$-27$. Our main computational result is that $\operatorname{tmf}^{-27}_\omega(B\rM_{24})[\frac12] = 0$ when $\omega \neq 0$. For comparison, the restriction map $\operatorname{tmf}^{-3}_\omega(B\mathrm{M}_{24})[\frac12]\to \operatorname{tmf}^{-3}(\mathrm{pt})[\frac12] \cong \bZ_3$ is surjective for one of the two nonzero values of $\omega$.
 
Our motivation comes from Mathieu Moonshine. Assuming a well-studied conjectural relationship between $\operatorname{TMF}$ and supersymmetric quantum field theory, there is a canonically-defined $\mathrm{Co}_1$-twisted-equivariant lifting $[\overline{V^{f\natural}}]$ of the class $\{24\Delta\} \in \operatorname{TMF}^{-24}(\mathrm{pt})$, for a specific value $\omega$ of the twisting, where $\mathrm{Co}_1$ denotes Conway's largest sporadic group. We conjecture that the product $[\overline{V^{f\natural}}] \nu$, where $\nu \in \operatorname{TMF}^{-3}(\mathrm{pt})$ is the image of the generator of  $\operatorname{tmf}^{-3}(\mathrm{pt}) \cong \mathbb{Z}_{24}$, does not vanish $\mathrm{Co}_1$-equivariantly, but that its restriction to $\mathrm{M}_{24}$-twisted-equivariant $\mathrm{TMF}$ does vanish. We explain why this conjecture answers some of the questions in Mathieu Moonshine: it implies the existence of a minimally supersymmetric quantum field theory with $\mathrm{M}_{24}$ symmetry, whose twisted-and-twined partition functions have the same mock modularity as in Mathieu Moonshine. Our AHSS calculation establishes this conjecture ``perturbatively'' at odd primes.
 
An appendix included mostly for entertainment purposes discusses ``$\ell$-complexes,'' in which the differential $D$ satisfies $D^\ell=0$ rather than $D^2=0$, and their relation to $\mathrm{SU}(2)$ Verlinde rings. The case $\ell=3$ is used in our AHSS calculations.}
\keywords{supersymmetry, topological modular forms, mock modular forms, sporadic groups, moonshine, group cohomology, Mathieu group, Steenrod powers, higher complexes.}
\preprint{}
\begin{document}

\maketitle



\section{Introduction}\label{sec.intro}

By writing the elliptic genus of an $\cN=(4,4)$ K3 sigma model in terms of characters of the chiral $\cN=4$ superalgebra, Eguchi,  Ooguri, and  Tachikawa~\cite{MR2802725} discovered a specific weight-$\frac12$ mock modular form (for $\Gamma = \mathrm{SL}_2(\bZ)$) with shadow $24 \eta(\tau)^3$:
$$ H(\tau) = 2q^{-1/8}\bigl( -1 + 45q + 231q^2 + 770q^3 + 2277q^4 +\dots\bigr)$$
Physics readily explains the mock modularity and integrality of $H$. It does not, however, explain why the coefficients of $H$ are dimensions of representations of Mathieu's largest group $\rM_{24}$ \cite{MR3539377}, and more generally raises the following mysteries:
\begin{question}\label{question.modularity}
Whenever a finite group $G$ acts on a K3 sigma model preserving $\cN=(4,4)$ supersymmetry, the elliptic genus can be twisted and twined by a commuting pair of elements $g,h \in G$. This produces twisted-twined versions $H_{g,h}(\tau)$ of $H(\tau)$ with interesting (mock) modularity properties, with multiplier that depends on the 't~Hooft anomaly of $G$. The group $G = \rM_{24}$ {does not} act nontrivially on any K3 sigma model~\cite{MR2955931}, but nevertheless the functions $H_{g,h}(\tau)$ exist for all commuting pairs $g,h \in \rM_{24}$~\cite{MR3108775}. Why?
\end{question}
\begin{question}\label{question.positivity}
 A priori, the supertrace in the elliptic genus allows for a large cancelation of bosonic and fermionic modes. In particular, the coefficients of $g \mapsto H_g(\tau) = H_{e,g}(\tau)$ are automatically virtual characters of $G$, but have no reason to be honest characters. Nevertheless, except for the constant term $-1$, these coefficients are honest characters~\cite{MR3539377}. Why?
 \end{question}
\begin{question} \label{question.growth}
The functions $H_{g,h}(\tau)$ enjoy a mock-modular analogue of the ``genus-zero property'' from monstrous moonshine~\cite{MR3021323}. Why?
\end{question}
The goal of this note is to suggest a solution to Question~\ref{question.modularity}. We will not provide a complete solution---some calculations are too hard---but our suggestion will at least answer what type of quantum field theory it is that can produce the functions $H_{g,h}(\tau)$.
 We will have nothing to say about Question~\ref{question.positivity}. We will briefly comment in Conjecture~\ref{conjecture.growth} about Question~\ref{question.growth}.

The first step is to recast the problem as a question in stable homotopy theory, and in particular in elliptic cohomology. This follows the spirit of \cite{MR2500561,MR2681787} to explain aspects of moonshine in elliptic cohomological terms, but we believe that many aspects of the specific approach here are new.

As explained in Section~\ref{sec.sqft}, compact minimally supersymmetric $(1{+}1)$-dimensional quantum field theories are the cocycles for an extraordinary cohomology theory $\SQFT^\bullet$. This statement is not mathematically rigorous: even the set of ``$(1{+}1)$-dimensional quantum field theories'' is not mathematically defined (although~\cite{MR2742432} comes close), and topologizing this set will surely be subtle, but the construction is physically straightforward. This cohomology theory connects directly with mock modularity~\cite{GJFmock}: if $\cS$ is an SQFT of cohomological degree $1-4k$ representing the trivial class in $\SQFT^{1-4k}(\pt)$, then any nullhomotopy of $\cS$ determines a (generalized) mock modular form with shadow determined by $\cS$. We will call the theory $\cS = \partial \cF$ the \define{boundary} of its nullhomotopy $\cF$. (Note that this is not a ``boundary condition,'' where the boundary is on the worldsheet. Rather, it should be thought of as a boundary in ``field space'' or ``target space,'' because if $\cF$ is a sigma model with target $M$, then $\partial \cF$ is a sigma model with target $\partial M$.)

If the boundary SQFT $\cS$ furthermore admits an action by a finite group $G$ of flavour symmetries, and if the nullhomotopy is $G$-equivariant, then the same construction produces mock modular forms depending on commuting pairs $(g,h)$. 
The level structure depends on the orders of $g$ and $h$, and the multiplier system depends on the 't~Hooft anomaly $\omega \in \H^3(\rM_{24}; \rU(1)) \cong \H^4(\rM_{24}; \bZ)$ of the $G$-action. 
(For the purposes of this introduction, we will ignore the fact that 't~Hooft anomalies for fermionic QFTs live in ``supercohomology'' and not in ordinary cohomology.)
In algebrotopological language, the fact that it makes sense to talk about deformations of SQFTs with $G$-flavour symmetry and anomaly $\omega$ means that the
cohomology theory $\SQFT^\bullet$ has a \define{twisted equivariant} enhancement, allowing us to define twisted equivariant cohomology groups $\SQFT^\bullet_\omega(\bB G)$ for any finite group $G$ and anomaly $\omega \in \H^4(G;\bZ)$. Here and throughout, we will write $\bB G$ for the classifying stack of $G$; a more standard name for $\SQFT^\bullet_\omega(\bB G)$ is $\SQFT^\bullet_{G,\omega}(\pt)$.

For example, the direct sum $\overline{\Fer(3)}{}^{\oplus 24}$ of $24$ copies of the antiholomorphic superconformal field theory $\overline{\Fer(3)}$ (three antichiral Majorana--Weyl fermions, with supersymmetry encoding the structure constants of $\mathfrak{su}(2)$) is nullhomotopic~\cite{WittenTMF}, and the corresponding mock modular form is $H(\tau)$. We can let $\rM_{24}$ act on $\overline{\Fer(3)}{}^{\oplus 24}$ by permuting the summands. Writing $\Perm$ for the standard degree-$24$ permutation representation of $\rM_{24}$, we will call the corresponding $\rM_{24}$-equivariant SCFT $\Perm \otimes \overline{\Fer(3)}$.
Because the  $\rM_{24}$-symmetry spontaneously breaks to $\rM_{23}$, and because $\H^4(\rM_{23};\bZ) = 0$, we can think of the $\rM_{24}$ action on $\Perm \otimes \overline{\Fer(3)}$as having any 't~Hooft anomaly that we want (see \S\ref{subsec.anomalies}). Thus we have classes $[\Perm \otimes \overline{\Fer(3)}] = [\Perm] \otimes [\overline{\Fer(3)}] \in \SQFT^{-3}_\omega(\bB \rM_{24})$ for every $\omega$. If one of them were nullhomotopic, then the nullhomotopy, with its  corresponding mock modular forms, might explain Mathieu Moonshine.

Unfortunately, we will show in Proposition~\ref{prop.permDNW} that $\Perm \otimes \overline{\Fer(3)}$ is not $\rM_{24}$-equivariantly nullhomotopic (for any value of the 't~Hooft anomaly). Rather, the boundary SQFT that we will use is $\cS = \overline{V^{f\natural}} \otimes \overline{\Fer(3)}$, where $V^{f\natural}$ is the holomorphic SCFT constructed in~\cite{MR2352133}. The automorphism group of $V^{f\natural}$ is Conway's largest sporadic group $\Co_1$, which contains $\rM_{24}$ as a subgroup; the computations in~\cite{JFT} show that the anomaly $\omega$ of the corresponding $\rM_{24}$-action on $\cS$ agrees with the anomaly for Mathieu Moonshine computed in~\cite{MR3108775}. Cohomological degrees in $\SQFT^\bullet$ are determined by the central charges of the representing QFTs, and this $\cS$ represents a class in cohomological degree $-27$. Our suggested answer to Question~\ref{question.modularity} is:

\begin{conjecture}\label{conjecture.main}
  The antiholomorphic SCFT $\cS = \overline{V^{f\natural}} \otimes \overline{\Fer(3)}$ represents the trivial class $[\cS] = 0$ in $\SQFT^{-27}_{\omega}(\bB \rM_{24})$.
\end{conjecture}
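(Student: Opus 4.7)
The plan is to reduce Conjecture~\ref{conjecture.main} to a computation in twisted equivariant $\tmf$ and then attack that computation via the Atiyah--Hirzebruch spectral sequence. Under the conjectural relationship between $\SQFT^\bullet$ and (a suitable enhancement of) $\TMF^\bullet$ invoked in the introduction, the class $[\cS]$ corresponds to the product $[\overline{V^{f\natural}}] \cdot \nu \in \TMF^{-27}_\omega(\bB \rM_{24})$. I would first try to prove the stronger statement that $\tmf^{-27}_\omega(B\rM_{24})[\tfrac12]$ is identically zero; this then forces the image of $[\cS]$ in $\TMF^{-27}_\omega(\bB\rM_{24})[\tfrac12]$ to vanish and establishes the conjecture away from the prime~$2$.

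To compute $\tmf^{-27}_\omega(B\rM_{24})[\tfrac12]$, I would run the AHSS
$$ E_2^{p,q} = \H^p\bigl(B\rM_{24};\, \tmf^q(\pt)[\tfrac12]\bigr) \;\Longrightarrow\; \tmf^{p+q}_\omega(B\rM_{24})[\tfrac12], $$
with differentials deformed by the twist $\omega \in \H^4(B\rM_{24};\bZ[\tfrac12]) \cong \bZ_3$. Since $\omega$ is $3$-torsion and $2$ is inverted, the only contributions come from the $3$-primary part, so the central input is $\H^\bullet(B\rM_{24};\bF_3)$ through roughly degree~$27$. For this I would combine Quillen's detection theorem, the known $3$-local subgroup structure of $\rM_{24}$ (whose Sylow $3$-subgroup is $\bZ/3\wr \bZ/3$), and the action of the mod-$3$ Steenrod algebra. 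The total-degree $-27$ diagonal of the $E_2$-page can then be read off, and each of its summands must be shown either hit by, or supporting, a nontrivial (twisted) differential.

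The principal obstacle is identifying the higher differentials. At the prime~$3$, the first nontrivial untwisted $\tmf$-AHSS differential is built from the Steenrod power $P^1$, and the $\omega$-twist adds an extra $\omega$-cup-product piece. Understanding the interplay of these is where the appendix on $\ell$-complexes is deployed in the case $\ell=3$: the algebra generated by $P^1$ and by $\omega$-multiplication, modulo lower-order terms, naturally organizes not as a chain complex but as an object with $D^3 = 0$, and this structure controls which survivors at one page are actually killed at the next. I would combine this algebraic framework with restriction to detecting subgroups (principally $\bZ/3 \wr \bZ/3$) to dispatch every class contributing to total degree~$-27$.

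There are two principal caveats. First, higher differentials may contribute corrections beyond what the $\ell$-complex model captures directly, and these would need to be ruled out by \emph{ad hoc} arguments using universal examples such as $B\bZ/3$ or subgroups of $\rM_{24}$ where the relevant classes originate. Second, and more fundamentally, even a successful $\tmf[\tfrac12]$ computation only establishes the conjecture away from~$2$; the $2$-primary piece, where the genuine modular-forms content sits and $\omega$ acts trivially, requires entirely different techniques and remains out of reach. This is why the paper describes its result as establishing Conjecture~\ref{conjecture.main} only ``perturbatively'' at odd primes.
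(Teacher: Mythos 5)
Your proposal takes the same route as the paper: reduce the conjecture to a computation in twisted $\tmf^\bullet[\frac12]$ for $B\rM_{24}$ via the Atiyah--Hirzebruch spectral sequence, control the differentials via the mod-$3$ Steenrod cube $\cP$ twisted by $\omega$, organize those differentials using the $\ell=3$ higher-complex structure, and detect in abelian subgroups. That is precisely Sections~\ref{sec.HM24}--\ref{sec.AHSS} of the paper, and the honest punchline (this is only ``perturbative'' evidence, not a proof) is also the paper's own.

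Two concrete issues, though. First, you misidentify the Sylow $3$-subgroup of $\rM_{24}$: it is the extraspecial group $3^{1+2}_+$ of order $27$ and exponent $3$, not $\bZ/3 \wr \bZ/3$ (which has order $81$). This is not cosmetic: the detecting subgroups you need are the maximal elementary abelian subgroups of $3^{1+2}_+$ (the two conjugacy classes $\bZ_{3\rA}\times\bZ_{3\rA}$ and $\bZ_{3\rA}\times\bZ_{3\rB}$ of Proposition~\ref{prop.MT} and~\ref{prop.restrictions}, with Weyl groups $\GL_2(\bF_3)$ and $D_{12}$), and carrying out detection inside the wrong $3$-group would produce the wrong answer for $\H^\bullet(\rM_{24};\bF_3)$ and the wrong $\cP$-action.

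Second, the sentence ``this then forces the image of $[\cS]$ in $\TMF^{-27}_\omega(\bB\rM_{24})[\tfrac12]$ to vanish and establishes the conjecture away from the prime $2$'' overclaims. Vanishing of $\tmf^{-27}_\omega(B\rM_{24})[\frac12]$ lives on the \emph{Borel} side; the conjecture lives on the \emph{genuine}-equivariant side, and the comparison map $\TMF^\bullet_\omega(\bB\rM_{24}) \to \TMF^\bullet_\omega(B\rM_{24})$ is a (conjectural) completion whose injectivity in the relevant degree is exactly what one does not know. Separately, $[\cS]$ lives a priori in $\TMF$ rather than $\tmf$, and its lift to a connective or equivariant class is itself conjectural (Conjectures~\ref{conjecture.mainTMF} and~\ref{conjecture.growth}). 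This is why the paper describes Theorem~\ref{thm.main} as establishing Conjecture~\ref{conjecture.main} only ``perturbatively'' \emph{and} only at odd primes: ``perturbatively'' refers to the $\bB G$-versus-$B G$ gap, not just to the incompletely-computed higher differentials, and it persists even after inverting $2$. Your summary correctly flags the odd-prime restriction but folds the two distinct caveats into one.
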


Without further information about $\SQFT^\bullet$, it seems impossible to test this conjecture. But in fact there is a rather clear idea of the structure of $\SQFT^\bullet$, with evidence continuing to amass~\cite{MR992209,MR2101224,MR2079378,CheungThesis,MR2330519,MR2742432,DBE2015,1610.00747,BET1,GJFIII,BET2,WittenTMF,GJFmock} in favour of the following conjecture:

\begin{conjecture}\label{conjecture.ST}
  The spectrum $\SQFT^\bullet$ represents the universal elliptic cohomology theory $\TMF^\bullet$ of ``topological modular forms'' described in~\cite{MR2597740,MR3223024}. 
\end{conjecture}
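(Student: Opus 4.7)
The plan is to construct a natural map of spectra $\SQFT^\bullet \to \TMF^\bullet$ and then verify that it is a weak equivalence. First I would adopt a precise definition of $\SQFT^\bullet$ as an $\Omega$-spectrum built from $\infty$-categories of compact minimally supersymmetric $(1{+}1)$-dimensional QFTs of varying central charge, along the lines of the Stolz--Teichner program; here the spectrum structure arises from tensoring with free fermions, which shifts central charge and hence cohomological degree, matching the suspension operation. The map to $\TMF^\bullet$ is then the Segal--Witten elliptic-genus construction: to an SQFT one associates its supersymmetric partition function on families of elliptic curves, assembled into a section of the Goerss--Hopkins--Miller sheaf of $E_\infty$-rings on the moduli stack $\cM_{ell}$ that defines $\TMF^\bullet$. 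Promoting this assignment to a map of ring spectra requires nontrivial input from Goerss--Hopkins obstruction theory, but is essentially formal once the sheaf-level partition function is understood as $E_\infty$.

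With the comparison map in hand, the next step is to verify it is an equivalence. Rationally this reduces to realizing every (weakly holomorphic) modular form as a partition function, which can be checked on explicit generating examples such as lattice CFTs, free-fermion systems, and their orbifolds. Integrally, the natural strategy is to mirror the sheaf-theoretic construction of $\TMF$ itself: one would produce, locally on $\cM_{ell}$ and at each prime, families of vertex operator algebras and sigma models representing the local sections of the $\TMF$-sheaf, and then descend. Alternatively, one can attempt a Postnikov-tower comparison, matching $k$-invariants one stage at a time; verifying such matchings on small generators is precisely the flavor of the AHSS computation carried out in the body of this paper.

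The main obstacle is that this \emph{is} the Stolz--Teichner conjecture, open for more than two decades. The deepest technical issue is that the $\infty$-category of $(1{+}1)$-dimensional SUSY QFTs, and even its set of deformation classes, lacks a generally accepted mathematical definition; until that is in place there is not even an object on which to state the theorem precisely, let alone prove it. Beyond this, the $2$-primary homotopy of $\TMF$ is notoriously subtle, with classes such as $\eta$, $\nu$, $\kappa$, and the Hopkins--Mahowald element $24\Delta$ requiring very specific realizations, and producing honest SCFTs that witness these classes together with their Toda brackets is expected to demand new geometric input from string compactifications and their defects. A plausible and strictly weaker target, very much in the spirit of the present paper, is to prove the conjecture ``perturbatively'' in AHSS-style: rather than construct the spectrum equivalence, one verifies its consequences on individual twisted-equivariant cohomology groups, which is what our computation accomplishes for $\SQFT_\omega^{-27}(\bB \rM_{24})[\frac12]$ assuming the $\TMF$ side of the conjecture.
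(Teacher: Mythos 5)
You have correctly recognized that there is nothing to prove here: Conjecture~\ref{conjecture.ST} is stated as a conjecture precisely because it is the open Stolz--Teichner conjecture, and the paper offers no proof, only a summary of the surrounding evidence and references. Your sketch of what a proof would require --- a precise definition of the $\SQFT$ spectrum, a Segal--Witten partition-function map into the Goerss--Hopkins--Miller sheaf, and a comparison after rationalization and prime-by-prime --- is a reasonable account of how the community expects the problem to be attacked, and your listing of the obstructions (no accepted definition of the space of SQFTs, the delicacy of the $2$-primary homotopy of $\TMF$) is accurate. So your ``proof'' is not a proof, and neither does the paper claim one; you are in agreement on what the statement is and where the difficulty lies.

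One small correction to your closing paragraph: the computation actually carried out in the paper is not of $\SQFT_\omega^{-27}(\bB\rM_{24})[\frac12]$ but of the strictly weaker approximation $\tmf_\omega^{-27}(B\rM_{24})[\frac12]$, which differs in two ways --- the spectrum $\tmf$ in place of $\TMF$ (or $\SQFT$), and the classifying \emph{space} $B\rM_{24}$ in place of the classifying \emph{stack} $\bB\rM_{24}$, i.e.\ Borel-equivariant rather than genuine equivariant cohomology. Both substitutions are discussed explicitly in \S\ref{subsec.tmf}. This is worth getting right, because the whole point of the paper's disclaimer that the AHSS computation establishes the conjecture only ``perturbatively'' is that the passage from $\bB G$ to $BG$ is a nontrivial completion, and it is exactly the sort of information that an AHSS-style verification cannot see.
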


Under this equivalence, the class $[\overline{\Fer(3)}] \in \SQFT^{-3}(\pt)$ corresponds to the class usually denoted $\nu \in \TMF^{-3}(\pt) = \pi_3 \TMF$, the image under the Hurewicz map of the 3-sphere $S^3 = \mathrm{SU}(2)$ with its Lie group framing~\cite{WittenTMF}, and the class $[\overline{V^{f\natural}}] \in \SQFT^{-24}(\pt)$ is $\{24 \Delta\} \in \TMF^{-24}(\pt)$ \cite{GJFIII}, where $\Delta = (c_4^3 - c_6^2) / 1726$ is the usual modular discriminant. (The curly braces around $\{24\Delta\}$ are there because $\Delta$ itself is not a class in $\TMF^{-24}(\pt)$.)

Recently a complete definition of equivariant TMF has become available~\cite{EllipticIII,2004.10254}.
Assuming Conjecture~\ref{conjecture.ST}, Conjecture~\ref{conjecture.main} becomes:

\begin{conjecture}\label{conjecture.mainTMF}
  There is a distinguished refinement of $\{24\Delta\} \in \TMF^{-24}(\pt)$ to a class in $\TMF^{-24}_\omega(\bB \Co_1)$, and, after multiplying by $\nu$ and restricting along $\rM_{24} \subset \Co_1$, the class
  $\{24 \Delta\} \nu$ vanishes in $\TMF^{-27}_\omega(\bB \rM_{24})$.
\end{conjecture}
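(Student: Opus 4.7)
The plan is to split the conjecture into two pieces: construction of the canonical refinement $[\overline{V^{f\natural}}] \in \TMF^{-24}_\omega(\bB \Co_1)$, and vanishing of $[\overline{V^{f\natural}}]\nu$ after restriction along $\rM_{24}\subset \Co_1$. The first piece is essentially tautological once Conjecture~\ref{conjecture.ST} is granted: the antiholomorphic SCFT $\overline{V^{f\natural}}$ is $\Co_1$-equivariant with anomaly $\omega$ computed in~\cite{JFT}, so it represents a class in $\SQFT^{-24}_\omega(\bB\Co_1)$ whose image under $\SQFT^\bullet \simeq \TMF^\bullet$ refines $\{24\Delta\}$ by~\cite{GJFIII}.

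For the vanishing statement, the cleanest strategy is to show that the entire group $\TMF^{-27}_\omega(\bB\rM_{24})$ is zero, which a fortiori kills $[\overline{V^{f\natural}}]\nu$. I would attack this via the Atiyah--Hirzebruch spectral sequence with $E_2^{p,q} = \H^p(\rM_{24};\pi_{-q}\TMF)$ and differentials twisted by $\omega \in \H^4(\rM_{24};\bZ)$, and then decompose the computation by prime according to $|\rM_{24}|=2^{10}\cdot 3^3 \cdot 5\cdot 7\cdot 11\cdot 23$.

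At odd primes $\H^\bullet(\rM_{24};\bZ[\frac12])$ is concentrated at the prime $3$, and the anomaly lies in the $3$-torsion piece $\H^4(B\rM_{24};\bZ[\frac12])\cong\bZ_3$. The first substantive task is a detailed computation of $\H^\bullet(\rM_{24};\bF_3)$ together with its Steenrod algebra action, enough to pin down the leading AHSS differentials (which at the prime $3$ are built from $\beta$, $P^1$, and the $\omega$-twist) and to follow what can possibly survive into total degree $-27$. This is exactly what the paper's main computation carries out, yielding $\tmf^{-27}_\omega(B\rM_{24})[\frac12]=0$, and hence establishing the conjecture \emph{perturbatively} in connective $\tmf$ at odd primes; lifting to the full periodic $\TMF^{-27}_\omega(\bB\rM_{24})[\frac12]$ requires some additional control over the $\Delta$-periodicity.

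The main obstacle is the $2$-primary part. Since $\omega$ has order dividing $3$, it becomes trivial after $2$-localization, so one is reduced to studying $\tmf^{-27}(B\rM_{24})_{(2)}$, which is not expected to vanish: $\H^\bullet(\rM_{24};\bF_2)$ is large and rich, $\nu$ is itself $2$-torsion in $\pi_\bullet\TMF$, and many diagonals of the AHSS contribute in total degree $-27$ with nontrivial $\eta$- and $v_1$-periodic structure. Proving the conjecture at~$2$ therefore requires distinguishing the specific class $[\overline{V^{f\natural}}]\nu$ from other possible survivors. The most attractive line of attack is geometric: construct a compact $\rM_{24}$-equivariant SQFT $\cF$ of cohomological degree~$-26$ with $\partial \cF = \overline{V^{f\natural}} \otimes \overline{\Fer(3)}$ whose twisted-twined partition functions recover the $H_{g,h}(\tau)$ of Mathieu Moonshine. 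Such a nullhomotopy would prove the conjecture at all primes simultaneously and would directly answer Question~\ref{question.modularity}; short of this, one might attempt a transfer argument along $2$-local maximal subgroups, though the interaction between the $3$-primary anomaly and $2$-primary restriction requires care.
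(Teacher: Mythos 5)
This statement is a conjecture, not a theorem, so the paper contains no proof of it; what the paper supplies is evidence, namely Theorem~\ref{thm.main}, obtained exactly as you describe: run the Atiyah--Hirzebruch spectral sequence for $\tmf^\bullet_\omega(B\rM_{24})[\frac12]$ using a detailed computation of $\H^\bullet(\rM_{24};\bF_3)$ with its Steenrod action, and observe that the whole degree $-27$ group dies for nonzero~$\omega$. You have correctly identified the two halves (existence of the $\Co_1$-equivariant refinement from Conjecture~\ref{conjecture.ST} plus \cite{GJFIII,JFT}; vanishing at $\rM_{24}$), and you have correctly flagged the three gaps that the paper itself flags --- genuine versus Borel equivariance, $\tmf$ versus $\TMF$, and the $2$-primary part.

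One substantive error: you assert that ``$\omega$ has order dividing $3$, it becomes trivial after $2$-localization,'' so that the $2$-primary obstacle reduces to the \emph{untwisted} group $\tmf^{-27}(B\rM_{24})_{(2)}$. That is not correct. The $\omega$ of Conjecture~\ref{conjecture.mainTMF} is the Mathieu Moonshine anomaly $\alpha$, which is (up to sign) the restriction of the generator of $\H^4(\Co_1;\bZ)\cong\bZ_{24}$ along $\rM_{24}\subset\Co_1$, and this restriction generates $\H^4(\rM_{24};\bZ)\cong\bZ_{12}$ by~\cite{JFT}. So $\omega$ has a nontrivial $2$-primary component of order $4$, and the $2$-local problem is a genuinely twisted one, $\tmf^{-27}_\omega(B\rM_{24})_{(2)}$. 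The group $\H^4(\rM_{24};\bZ[\frac12])\cong\bZ_3$ quoted in the paper is the group \emph{after} inverting $2$, not the order of $\omega$.

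A smaller caveat: your ``cleanest strategy'' of killing the entire group $\TMF^{-27}_\omega(\bB\rM_{24})$ happens to work $3$-locally (that is what Theorem~\ref{thm.main} shows in the Borel/connective approximation) but is almost certainly too ambitious globally, particularly at $p=2$ where the paper explicitly declines to compute. The paper's own posture is more modest: show the specific class $\{24\Delta\}\nu$ dies, which your last paragraph also proposes via a direct geometric nullhomotopy; that geometric route is indeed the one the paper regards as the real answer to Question~\ref{question.modularity}.
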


Note that the $\rM_{24}$-action on $V^{f\natural}$, and hence on $\cS = \overline{V^{f\natural}} \otimes \overline{\Fer(3)}$, extends to a $\Co_1$-action. However, we do not believe that $[\cS] = \{24\Delta\}\nu$ vanishes $\Co_1$-equivariantly. It is worth emphasizing that, in order to define $\{24 \Delta\} \nu \in \TMF^{-27}_\omega(\bB \rM_{24})$, one would need to show that the nonequivariant class $\{24\Delta\} \in \TMF^{-24}(\pt)$ admits an equivariant refinement to a class in $\TMF^{-24}_\omega(\bB \Co_1)$. The existence of such a refinement is implied by Conjecture~\ref{conjecture.ST}, but it has not been shown mathematically rigorously.
Furthermore, in \S\ref{subsec.tmf} we will suggest that an answer to Question~\ref{question.growth} might come from proving:
\begin{conjecture}
  $\{24 \Delta\}$ refines to a class in $\operatorname{Tcf}^{-24}_\omega(\bB\Co_1)$, the space of (twisted) $\Co_1$-equivariant \define{topological cusp forms}, and the restriction of $\{24 \Delta\}\nu$ vanishes in $\operatorname{Tcf}^{-27}_\omega(\bB\rM_{24})$.
\end{conjecture}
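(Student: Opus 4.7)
The plan is to enhance Conjecture~\ref{conjecture.mainTMF} from $\TMF$ to $\operatorname{Tcf}$ by means of the defining fiber sequence $\operatorname{Tcf} \to \operatorname{Tmf} \to \operatorname{TMF}((q))$, in which $\operatorname{Tcf}$ is the fiber of the $q$-expansion map at the Tate curve. Both assertions of the conjecture then become compatibility statements with this cusp map: the refinement of $\{24\Delta\}$ to $\operatorname{Tcf}^{-24}_\omega(\bB\Co_1)$ exists iff the $\Co_1$-equivariant refinement granted by Conjecture~\ref{conjecture.mainTMF} has trivial $q$-expansion in a $\Co_1$-equivariant sense, and the product $\{24\Delta\}\nu$ vanishes in $\operatorname{Tcf}^{-27}_\omega(\bB\rM_{24})$ iff its image in $\TMF^{-27}_\omega(\bB\rM_{24})$ vanishes---already granted by Conjecture~\ref{conjecture.mainTMF}---and a nullhomotopy can be chosen to commute with the $q$-expansion map.

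For the lift of $\{24\Delta\}$ itself I would appeal to Conjecture~\ref{conjecture.ST}: the $\Co_1$-equivariant refinement is realized by the holomorphic SCFT $\overline{V^{f\natural}}$, whose nonequivariant elliptic genus is the cusp form $24\Delta$ and whose twisted-twined partition functions are the McKay--Thompson series of Conway moonshine, which should all be cusp forms at the appropriate level. This cusp-form condition is exactly the vanishing of the image under the Tate-curve $q$-expansion map, formally producing the desired lift to $\operatorname{Tcf}^{-24}_\omega(\bB\Co_1)$. Promoting this physical reasoning to a rigorous construction, however, requires first defining twisted equivariant $\operatorname{Tcf}$ for a finite group---by combining the twisted equivariant $\Tmf$ of \cite{EllipticIII,2004.10254} with a $G$-equivariant Tate-curve $q$-expansion map---a foundational step that, to my knowledge, has not yet been carried out.

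For the vanishing of $\{24\Delta\}\nu$ after restriction to $\rM_{24}$, I would re-run the Atiyah--Hirzebruch spectral sequence of the main computation with $\operatorname{Tcf}^\bullet(\pt)$ in place of $\tmf^\bullet(\pt)$ on the $E_2$ page. The two coefficient rings differ only by cusp-form classes such as $\Delta$ itself that are present in $\operatorname{Tcf}^\bullet(\pt)$ but absent from $\tmf^\bullet(\pt)$ and $\TMF^\bullet(\pt)$, and in the degree range relevant to $-27$ only finitely many of these contribute. The $\bF_3$-cohomology techniques of the body of the paper---including the Steenrod-power operations and the $\ell=3$ higher-complex machinery of the appendix---apply verbatim to kill the contributions already present in the $\tmf$-AHSS, and a direct check against the Conway-moonshine character data should dispose of the ``new'' classes introduced by passing to $\operatorname{Tcf}$.

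The main obstacle is therefore definitional rather than computational: twisted equivariant $\operatorname{Tcf}$ must first be constructed for $\Co_1$ and $\rM_{24}$ with the anomaly $\omega$, and the class $[\overline{V^{f\natural}}]$ must be shown to factor through $\operatorname{Tcf}^{-24}_\omega(\bB\Co_1) \to \TMF^{-24}_\omega(\bB\Co_1)$ in a way compatible with $\rM_{24}$-restriction. Once those foundations are in place, the vanishing in degree $-27$ should follow---by naturality of the cusp map together with the main computation of this paper---from the AHSS-based argument underlying the main theorem, supplemented by a controlled bookkeeping of the extra cusp-form contributions.
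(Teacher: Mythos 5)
The statement you are addressing is a \emph{conjecture} in the paper (it is a restatement of Conjecture~\ref{conjecture.growth}), and the paper offers no proof of it --- only a motivation in \S\ref{subsec.tmf} via the ``optimal growth condition'' of Cheng--Duncan--Harvey, together with the observation that the adjusted mock modular forms $H_{g,h}(\tau)\,\eta(\tau)^{27}$ are ``mock cusp forms.'' So there is no proof in the paper to compare against; the question is whether your sketch is internally consistent and whether it lines up with the paper's motivation. It mostly does in spirit, but it contains one genuine error that would derail the argument.

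Your defining fiber sequence $\operatorname{Tcf} \to \Tmf \to \TMF(\!(q)\!)$ is wrong. The paper (following Hill--Lawson) defines $\operatorname{Tcf}^\bullet$ as the homotopy fibre of the restriction-to-the-cusp map $\Tmf^\bullet \to \mathrm{KO}^\bullet$, which refines $\mf^\bullet \to \cO(\mathrm{cusp}) = \bZ$; this is \emph{not} the $q$-expansion / Tate-curve map, whose target would be something like $\mathrm{KO}^\bullet\llbracket q\rrbracket$. Accordingly, the criterion for $\{24\Delta\}$ to lift to $\operatorname{Tcf}^{-24}_\omega(\bB\Co_1)$ is that its image in $\mathrm{KO}^{-24}_\omega(\bB\Co_1)$ vanish --- i.e.\ the constant ($q^0$) term of every twisted-twined adjusted partition function vanish --- not that the ``full $q$-expansion'' be trivial, which would force the class itself to be zero. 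Your physical justification (the Conway-moonshine McKay--Thompson series being cusp forms at their level) is the right kind of evidence, and it \emph{does} support the correct criterion, but you have misstated what that criterion is, and the slip propagates into both halves of your argument.

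A secondary, related slip: your claim that $\operatorname{Tcf}^\bullet(\pt)$ and $\tmf^\bullet(\pt)$ ``differ only by cusp-form classes such as $\Delta$ itself'' is at odds with the paper's explicit warning that $\Delta$ does \emph{not} lift to $\Tmf^\bullet(\pt)$ and that $\operatorname{Tcf}^\bullet$ is not a suspension of $\Tmf^\bullet$ --- rather it represents a nontrivial (exotic, $24$-torsion) class in $\operatorname{Pic}(\Tmf)$. The difference in homotopy groups is governed by the long exact sequence of $\operatorname{Tcf} \to \Tmf \to \mathrm{KO}$, and any AHSS computation would need to use that coefficient ring; one cannot simply adjoin $\Delta$. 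That said, you are correct that the principal obstruction is foundational --- a twisted $G$-equivariant $\operatorname{Tcf}$ has not yet been constructed --- and the idea of feeding the correct $\operatorname{Tcf}^\bullet(\pt)$ coefficients through the paper's $p=3$ AHSS machinery is a reasonable direction, as is your appeal to the cusp-form property of the Conway-moonshine data, which is exactly the ``optimal growth'' reformulation the paper invokes.
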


Unfortunately, this author is not aware of techniques for computing twisted equivariant $\TMF^\bullet$ (let alone $\operatorname{Tcf}^\bullet$) groups. Instead, as evidence in favour of Conjecture~\ref{conjecture.mainTMF}, we will attempt to compute the related group $\tmf^{-27}_\omega(B \rM_{24})$. There are two changes involved. First, we have replaced the genuinely equivariant problem with the Borel-equivariant one. Any group $G$ has a classifying space $B G$, and for any cohomology theory $\cE^\bullet$,  \define{Borel-equivariant $\cE^\bullet$-cohomology} studies cohomology of $B G$ in place of $\bB G$.
As with Atiyah--Segal completion for K-theory \cite{MR2172633,MR2307274},
 one expects in general that $\cE^\bullet(BG)$ is an approximation of $\cE^\bullet(\bB G)$, but the latter may include more information than the former. (In fact, the ``completion'' story for TMF is subtle, and typically fails for Lie groups \cite{2004.10254}, but seems to hold for finite groups.)
Second, we have replaced the spectrum $\TMF^\bullet$ by the related spectrum $\tmf^\bullet$. Speaking very roughly (see \S\ref{subsec.tmf} for an important correction), $\tmf^\bullet$ corresponds to the modular forms which are bounded at the cusp $\tau = i\infty$ and $\TMF^\bullet$ corresponds to the modular forms which are meromorphic at the cusp; on homotopy groups, $\TMF^\bullet(\pt) = \tmf^\bullet[\Delta^{-24}]$, and if a class in $\tmf^\bullet$ vanishes, then its image in $\TMF^\bullet$ also vanishes.
There is no known physical description of $\tmf^\bullet$, and there is not expected to be one.

 Actually, computing $\tmf^{\bullet}_\omega(B\rM_{24})$ is still too hard, because the $2$-local structure of $\tmf^\bullet$ is complicated and the $2$-local cohomology of $\rM_{24}$ is not known. So we will attempt only $\tmf^{\bullet}_\omega(B \rM_{24})[\frac12]$. 
After further inverting $3$, the spectrum $\tmf^\bullet[\frac16]$ becomes the spectrum called ``$E\ell\ell$'' in~\cite{MR1263724}, where it is shown that $\tmf^\bullet_\omega(B\rM_{24})[\frac16]$ (which is independent of $\omega$) is supported only in even degrees. As such, our computation is interesting only at the prime~$3$. 
After a detailed study of $\H^\bullet(\rM_{24}; \bF_3)$ in Section~\ref{sec.HM24}, in Section~\ref{sec.AHSS} we investigate the Atiyah--Hirzebruch spectral sequence for
$\tmf^{\bullet}_\omega(B \rM_{24})[\frac12]$. 
Note that we are particularly interested in the groups $\tmf^{-27}_\omega(B\rM_{24})$, which houses the
image under the completion map $\tmf(\bB\rM_{24}) \to \tmf(B \rM_{24})$ of the equivariant enhancement of $\{24\Delta\}\nu$,
and $\tmf^{-3}_\omega(B \rM_{24})$, which houses the image under completion of $\Perm \nu$. Our main mathematical result is:

\begin{theorem}\label{thm.main}
  If $\omega \in \H^4(\rM_{24}; \bZ[\frac12]) \cong \bZ_3$ is nonzero, then $\tmf^{-27}_\omega(B\rM_{24}) = 0$.
   For comparison, for one of the two nonzero values of $\omega$, and not the other one, the restriction map $\tmf^{-3}_\omega(B \rM_{24})[\frac12] \to \tmf^{-3}(\pt)[\frac12]$ is nonzero. 
\end{theorem}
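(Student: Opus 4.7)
The plan is to run the twisted Atiyah--Hirzebruch spectral sequence (AHSS)
$$E_2^{p,q} = \H^p\bigl(B\rM_{24};\, \pi_{-q}\tmf[\tfrac12]\bigr) \;\Longrightarrow\; \tmf^{p+q}_\omega(B\rM_{24})[\tfrac12]$$
that is the subject of Section~\ref{sec.AHSS}. After inverting $2$ only the prime $3$ requires real work: at primes $\ge 5$ the spectrum $\tmf[\tfrac16]$ is Landweber--Ravenel--Stong's $E\ell\ell$, which is concentrated in even degrees and has $\omega$-independent $\rM_{24}$-cohomology, so the entire argument is $3$-local.

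First I would import the explicit description of $\H^\bullet(\rM_{24};\bF_3)$ from Section~\ref{sec.HM24}, together with its Bockstein and Steenrod-power structure, and enumerate the contributions on the antidiagonals $p+q=-27$ and $p+q=-3$. In negative total degree, $\pi_\bullet\tmf_{(3)}$ is a module over $\bZ_{(3)}[c_4,c_6,\Delta]$ generated by $1$, $\alpha\in\pi_3$, and $\beta\in\pi_{10}$; pairing these with $\H^\bullet(\rM_{24};\bF_3)$ produces a finite, explicit list of potential $E_2$-survivors in each total degree of interest.

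Next I would make the twisted differentials explicit. The untwisted $3$-local $\tmf$-AHSS differentials are built from multiplication by $\alpha$ and $\beta$ coupled with Milnor-style operations $Q_i$ (involving $P^1$ and the Bockstein), and $\omega$ deforms each $d_r$ by a cup-product correction. This is precisely the point at which the $\ell$-complex formalism of the appendix does its work: when $\omega\neq 0$ the cochain-level operator satisfies $D^3=0$ rather than $D^2=0$, and its cohomology is computed using the $\mathrm{SU}(2)$-Verlinde-ring identities developed there. With the $d_r^\omega$ in hand, the $(-27)$-antidiagonal reduces to a finite verification that every $E_2$-survivor is killed by, or kills, a neighbour. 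For the $(-3)$-antidiagonal the situation is much cleaner: the generator of $\pi_3\tmf[\tfrac12]\cong\bF_3$ sits at $E_2^{0,-3}$, and the restriction map to $\tmf^{-3}(\pt)[\tfrac12]$ is nonzero precisely when this generator survives to $E_\infty$. Computing the leading twisted differential on the class $1\otimes\alpha$ as a function of $\omega$ then isolates the single nonzero value of $\omega$ for which $\alpha$ survives.

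The hard part is the middle step. It requires complete Steenrod-power information on $\H^\bullet(\rM_{24};\bF_3)$ and the $3$-local $\tmf$-AHSS structure through at least $d_9$, together with a row-by-row verification that every contribution to the $(-27)$-antidiagonal is accounted for. The appendix on $\ell$-complexes is designed precisely to make this bookkeeping tractable, by translating the intricate tangle of twisted Steenrod relations into manipulations inside the $\mathrm{SU}(2)$-Verlinde ring.
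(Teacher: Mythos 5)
Your plan is the paper's (Sections~\ref{sec.HM24}--\ref{sec.AHSS}): $3$-localize, compute $\H^\bullet(\rM_{24};\bF_3)$ with its Steenrod action, establish the twisted AHSS differentials, and check the relevant antidiagonals, reading off the restriction statement from the $m=0$ column. But two conceptual slips need fixing. First, $\cD=\cP-\omega$ is a $3$-differential for \emph{every} $\omega$, including $\omega=0$: the Adem relation already gives $\cP^{\circ 3}=0$ (Lemma~\ref{lemma.Dis3dif}), so it is not that the twist replaces $D^2=0$ by $D^3=0$, and the $\ell$-complex bookkeeping is needed in the untwisted column too. Second, and more critically, $\d_8$ is $-\cD^{\circ 2}\otimes\langle\nu,\nu,-\rangle$, a genuine Massey product (Proposition~\ref{prop.AHSSdifs}): bare ``multiplication by $\alpha$ and $\beta$'' would give $\d_8=\cD^{\circ 2}\otimes\nu^2=0$ since $\nu^2=0$, the $(-27)$ antidiagonal would not close, and the restriction-map dichotomy would be invisible---$1\otimes\nu$ survives $\d_4$ for \emph{all} $\omega$ since it maps to $\cD(1)\otimes\nu^2=0$, and it is $\d_8(1\otimes\nu)=-\cD^{\circ 2}(1)\otimes\mu$ that singles out $\omega=-r$. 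What the $\ell$-complex appendix actually supplies is that $\d_4$ and $\d_8$ assemble into an ordinary complex with arrows alternating $\cD,\cD^2,\cD,\cD^2,\dots$ between the torsion lines of $\tmf^\bullet(\pt)[\frac12]$, whose cohomology is governed by the $\bZ_2$-grading that $\cat{Ver}_3\cong\cat{SVec}$ puts on $\cD$-cohomology (distinguishing summands $\{n\}$ from $\{n\to n+4\}$); the $\mathrm{SU}(2)$-Verlinde connection is decorative and does not enter the computation. Finally, you should address AHSS convergence over the infinite-dimensional $B\rM_{24}$; the paper closes this in Corollary~\ref{cor.main2} by showing no higher differential lands in the $m=0$ column, which requires the explicit $\cD$-cohomology computation and is not automatic when $\omega\neq 0$.
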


 Spectral sequences in general, and the Atiyah--Hirzebruch spectral sequence in particular, are the homotopy algebraist's version of perturbation theory. 
 Indeed, 
a physicist should think of the difference between $\TMF^\bullet_\omega(\bB G)$ and $\TMF^\bullet_\omega(B G)$ as the difference between nonperturbative and perturbative field theory. One can pull back along the map $BG \to \bB G$ to produce a map $\TMF^\bullet_\omega(\bB G) \to \TMF^\bullet_\omega(B G)$. The domain, hypothetically, encodes deformation classes of SQFTs with $G$-flavour symmetry, and in particular their behaviours on worldsheets equipped with arbitrary $G$-bundle, whereas the codomain remembers only the physics ``near the trivial $G$-bundle.''
(Since $G$ is finite, the stack of $G$-bundles has no perturbative structure ``over $\bC$,'' but it does have perturbative structure $p$-locally for any prime $p$ dividing the order of $G$.)

To end the paper, Appendix~\ref{appendix-3complex} describes some of the theory of ``chain complexes'' in which the ``differential'' does not satisfy $D^2=0$ but rather $D^\ell = 0$ for $\ell > 2$. Some of this theory, for $\ell=3$, is important in our calculations. The larger story connects in intriguing ways to the Verlinde ring for $\mathrm{SU}(2)$ at level $k=\ell-2$, and some readers may find it entertaining.

\subsection{Notation}\label{subsec.notation}

We will write $\bZ_n$ for the cyclic group of order $n$. This name is reasonably standard in the physics literature; mathematicians may prefer $C_n$ or $\bZ/n\bZ$.
For other finite groups, we generally follow ATLAS naming conventions~\cite{ATLAS}.
For a prime $p$, we will write $\bZ_{(p)}$ for the ring of $p$-local integers, i.e.\ the subring of $\bQ$ consisting of fractions  with denominator coprime to $p$. The finite field with $q = p^n$ elements is $\bF_q$; a generic field is~$\bK$.

We will always use cohomological degree conventions, with degrees always written as superscripts. For example, the \define{homotopy groups} of a spectrum $\cE^\bullet$ are $\cE^\bullet(\pt) = \pi_0\cE^\bullet = \pi_{-\bullet}\cE$.
If $\cE^\bullet$ is \define{connective} (e.g.\ $\tmf^\bullet$), then these groups are supported in \emph{nonpositive} cohomological degree. Without care, this paper would devolve into alphabet soup. So, for example, Bockstein maps will be denoted $\Box$ rather than $\beta$. We will sometimes write the group cohomology of a finite group $G$, with coefficients in an abelian group $A$, as $\H^\bullet(G;A)$, and sometimes as $\H^\bullet(BG;A)$, with ``$BG$'' denoting the classifying space of $G$. For an extraordinary cohomology theory $\cE^\bullet$, we will always use the latter name: $\cE^\bullet(BG)$ is the $\cE^\bullet$-cohomology of the space~$BG$. If $\cE^\bullet$ also admits an equivariant refinement, then we can evaluate $\cE^\bullet$ on the classifying stack $\bB G$ of $G$; by definition $\cE^\bullet(\bB G) = \cE^\bullet_G(\pt)$ is the $G$-equivariant cohomology of a point. When $\cE^\bullet = \H^\bullet(-;A)$ is ordinary cohomology, the groups $\H^\bullet(BG;A)$ and $\H^\bullet(\bB G;A)$ agree, justifying our use of simply $\H^\bullet(G;A)$.

\subsection{Acknowledgements}

I thank D.~Berwick-Evans for detailed and helpful comments on a draft of this paper. 
Sections~\ref{subsect.holomorphicanomaly} and~\ref{subsec.spectrum} are based on ideas developed by the author jointly with D.\ Gaiotto, whom I thank for ongoing discussions about this circle of ideas.
Research at Perimeter Institute is supported in part by the Government of Canada through the Department of Innovation, Science and Economic Development Canada and by the Province of Ontario through the Ministry of Colleges and Universities. The Perimeter Institute is in the Haldimand Tract, land promised to the Six Nations. Dalhousie University is in Mi`kma`ki, the ancestral and unceded territory of the Mi`kmaq.

\section{$\cN=(0,1)$ SQFTs}\label{sec.sqft}

The goal of this section is to describe a detailed but largely conjectural relationship between elliptic cohomology, mock modularity, and supersymmetry. As such, the language will drift between mathematics and theoretical physics, and we will leave some statements mathematically imprecise. Sections~\ref{sec.HM24} and~\ref{sec.AHSS} consist of rigorous mathematical calculations motivated by the conjectures in this section.

\subsection{A source of holomorphic anomalies} \label{subsect.holomorphicanomaly}

The starting point of our analysis is the following question: What type(s) of quantum field theories produce mock modular forms? The (an?)\ answer has been well-investigated for more than a decade~\cite{MR2313986,MR2679378,MR2680313,MR2821103,MR2949287,murthy13,Ashok:2013pya,Gupta:2017bcp,GJFmock,1905.05207,2004.05742,DPW2020}: A $(1{+}1)$-dimensional quantum field theory can produce mock modular instead of modular forms if it is noncompact. 

We will not in this paper attempt to define ``quantum field theory.'' We will always assume our QFTs to be unitary, so that we have access to Wick rotation to Euclidean signature (imaginary time). The physics literature does not seem to include a complete definition of ``compactness'' for a QFT, but the consensus is that it should be a ``spectral condition,'' since in the case of sigma-models what distinguishes compact from noncompact target is that the former lead to Hamiltonians with discrete spectrum, whereas the latter have continuous spectrum. We propose the following: a $(d{+}1)$-dimensional QFT is \define{compact} if its Wick-rotated partition function ``converges absolutely'' on all closed spacetimes: in Lagrangian formalism, we imagine an ``absolutely convergent path integral'' (in spite of the fact that not all QFTs have path integral descriptions, and most spaces of fields do not support measures of integration in the mathematical sense); in Hamiltonian formalism, we are asking that the Wick-rotated evolution operator $\tr(\exp(-\tau \hat H))$ should be trace-class. This latter condition occurs when the spectrum of the Hamiltonian $\hat H$ is bounded below, discrete, and does not grow too slowly. Compactness is a nontopological version of asking whether a functorial topological field theory is defined on all cobordisms, or if it is only partially defined.

Badly noncompact QFTs might even fail to assign Hilbert spaces of states to all closed $d$-dimensional spaces. 
The most mild type of noncompactness is when the Hilbert spaces are all well-defined, but the Wick-rotated partition function converges only conditionally. The value of a conditionally-convergent sum or integral can depend on the method used to evaluate it, and so the partition function of a mildly noncompact QFT is not quite well-defined. This is the origin of phenomena like mock modularity in noncompact QFTs: modular transformations may not be compatible with the chosen evaluation method.

Focusing on the case we care most about, let $\cF$ be a $(1{+}1)$-dimensional QFT, and write $Z(\cF)$ for its partition function on flat oriented $2$-dimensional tori (these being the only flat closed oriented $2$-manifolds). If $\cF$ has fermions, then $Z(\cF)$ depends on a choice of spin structure on the worldsheet. We will care most about the case of nonbounding spin structure, which is to say the Ramond spin structure along both the $A$- and $B$-cycles; we will thus call this the ``Ramond-Ramond''  partition function $Z_{RR}(\cF)$. The space of flat tori (with RR spin structure) is $3$-real dimensional: the local coordinates are the complex structure $(\tau,\bar\tau)$ and the area $a$. If $\cF$ is compact, then $Z(\cF)$ is a well-defined function of these three real variables, and we assume that it is real-analytic. (As with essentially all analytic questions about QFT, this is an assumption, and we must fold it into some aspect of the definition of ``compact QFT.'') Because different values of $(\tau,\bar\tau,a)$ describe the same torus, $a \mapsto Z_{RR}(\tau,\bar\tau,a)$ is a real-analytic family of real-analytic $\SL_2(\bZ)$-modular functions.
In the conformal case, of course, there is no $a$-dependence.

Now suppose that $\cF$ is not just a compact QFT, but also is equipped with an $\cN=(0,1)$ supersymmetry. A standard argument then says that $Z_{RR}(\cF)$ depends only on $\tau$~\cite{MR885560}. This argument is so familiar that we will not review it, except to make a few comments:
\begin{enumerate}
\item The statement only holds in the Ramond-Ramond spin structure. 
\item Let $\bar{Q}$ denote the supercurrent for the $\cN=(0,1)$ supersymmetry. (It is usually called ``$\bar{G}$,'' but we will soon want the letter $G$ to stand for a finite group.) This supercurrent is a worldsheet spinor, and so has two components, which explain the two nondependencies (on $\bar\tau$ and on $a$). Given coordinates $z,\bar z$ on the worldsheet, we can write the two components of $\bar{Q}$ as $\bar{Q}_z$ and $\bar{Q}_{\bar z}$. The former is the ``trace'' of $\bar{Q}$, and vanishes if $\cF$ is superconformal.
\item The growth rate of $Z_{RR}(\cF)(\tau)$ as $\tau \to i\infty$ is not worse than $\exp(\tau_2 c / 24)$, where $c$ is the central charge of $\cF$, and $\tau_2 = (\tau - \bar\tau)/2i$ is the imaginary part of $\tau$.  As such, $Z_{RR}(\cF)(\tau)$ is a \define{weakly holomorphic modular function,} meaning a modular function which is holomorphic for finite $\tau$, and meromorphic at the cusp $\tau = i\infty$.
\item $\cF$ has a \define{gravitational anomaly} if its left and right central charges $c_L,c_R$ do not match. The difference $w = c_R - c_L$ is always a half-integer. When $w \neq 0$, $Z_{RR}(\cF)$ suffers a multiplier under $T$-transformations: $$T[Z_{RR}(\cF)] = e^{-w\frac{2\pi i}{24}}Z_{RR}(\cF).$$ This multiplier can be absorbed by adjusting $$Z_{RR}(\cF) \leadsto Z'_{RR}(\cF) = Z_{RR}(\cF) \eta(\tau)^{2w}.$$ This \define{adjusted partition function} $Z'_{RR}(\cF)$ is then a weakly holomorphic modular form of weight $w$ and trivial multiplier. It matches better with the mathematics conventions for Witten genera. In 
\cite{GJFmock}, this adjustment (up to a factor of $i^w$ that we will ignore) is interpreted in terms of ``spectator'' Majorana--Weyl fermions that are added to $\cF$ to cure the anomaly.
 In \S\ref{subsec.spectrum} we will interpret the integer $n = -2w$ as the \define{cohomological degree} of $\cF$.
 \item The $q$-expansion of $Z_{RR}(\cF)$ is the index of the $S^1$-equivariant supersymmetric quantum mechanics model $\cH(\cF)$ formed by compactifying $\cF$ on an $A$-cycle with Ramond spin structure, thus explaining why $Z_{RR}(\cF) \in \bZ(\!(q)\!)$. The $q$-expansion of $Z'_{RR}(\cF)$ is built by adjusting the SQM model by some spectator fermions.
\end{enumerate}

When $\cF$ is not compact, the standard arguments can break down, as we have already indicated. 
Following~\cite{GJFmock}, we will focus on a particularly mild noncompactness, which is when $\cF$ has ``cylindrical ends.''

In order to give the definition, we will need the following construction.
 Let $\Phi$ be a self-adjoint operator in the SQFT $\cF$, thought of as a ``function'' $\Phi: \cF \to \bR$. There is a straightforward way to construct the ``fibre'' of $\Phi$ over $x \in \bR$, which we will denote $\cF(\Phi = x)$, or $\cF(x)$ when $\Phi$ is implicit. Namely, add to $\cF$ a chiral Majorana--Weyl fermion $\lambda$, which will serve as a Lagrange multiplier, to produce the QFT $\cF \otimes \Fer(1)$. Now deform the supersymmetry on $\cF \otimes \Fer(1)$ by adding a superpotential equal to
$$ W = \lambda (\Phi-x).$$
This results in an adjustment of the Lagrangian like $\lambda \bar{Q}[\Phi-x] + (\Phi-x)^2$. 
 In the IR, one expects this $\cF(\Phi = x)$ to flow to an SQFT in which $\Phi$ takes the constant value $x$. 
 
 Conversely, one expects to be able to recover $\cF$ from the $\bR$-family of SQFTs $x \mapsto \cF(\Phi = x)$ by dynamicalizing the parameter $x$. This dynamicalization procedure involves replacing the parameter $x$ by a scalar field $\phi$  and also introducing its (right-moving) superpartner $\psi$, so that together $(\phi,\psi)$ is a {scalar supermultiplet} for the $\cN=(0,1)$ algebra. We will write the result of this dynamicalization as
 $$ \cF(x) \mapsto \int_{\phi,\psi} \cF(\phi).$$
 
The SQFT $\cF$ is then said to have \define{cylindrical ends} if it can be equipped with a $\Phi$ such that the SQFTs $\cF(x)$ are all compact, and if supersymmetry is spontaneously  broken when $x \ll 0$, and if the theories $\cF(x)$ stabilize to some fixed SQFT $\partial \cF$ when $x \gg 0$. We will call $\partial \cF$ the \define{boundary} of $\cF$. Note that this is a boundary in field space, and not a ``boundary condition'' that can be assigned to boundaries of the worldsheet.

For example, if $\cF$ consists of a  massless scalar $\phi$ (i.e.\ a noncompact nonchiral boson), together with its superpartner $\psi$ (an antichiral Majorana--Weyl fermion), and if $\Phi = \phi^2$, then $\cF(x)$ picks up a quartic potential $(\phi^2 - x)^2$ and a Yukawa coupling $\lambda \phi \psi$. If $x<0$, $\cF(x)$ has spontaneous supersymmetry breaking, whereas if $x > 0$, $\cF(x)$ has two massive vacua, with fermion masses of opposite signs, and so the two vacua differ by a relative Arf invariant (c.f.\ \S2.1.1 of~\cite{1911.11780}). After fixing a convention about fermion masses (as in~\cite[Section 2]{10.21468/SciPostPhys.7.1.007}, for example), we see that the noncompact scalar multiplet, i.e.\ the sigma-model with target $\bR$, has cylindrical boundary equal to $(\text{trivial TQFT}) \oplus (\text{Arf TQFT})$.

Suppose that $\cF$ has cylindrical ends, parameterized by an operator $\Phi$. Then the partition function of $\cF$ has no reason to converge absolutely. But if the partition function of the boundary vanishes,
$$ Z_{RR}(\partial \cF) = 0,$$
then one expects that the path integral description of $Z_{RR}(\cF)$ will converge conditionally, because the end $\bR_{\gg 0} \times \partial \cF$ contributes a term like $0 \times \operatorname{vol}(\bR_{\gg 0})$, which we take to be $0$. In this way we can define $Z_{RR}(\cF)$. (The value of $Z_{RR}(\cF)$ might depend on the parameterization~$\Phi$.) 

Because we never chose coordinates on the worldsheet,  $Z_{RR}(\cF)(\tau,\bar\tau,a)$ is manifestly $\mathrm{SL}_2(\bZ)$-modular, and we will assume that it is real-analytic. However, it is not automatically holomorphic, because the standard argument for holomorphicity requires compactness. Rather, $Z_{RR}(\cF)(\tau,\bar\tau,a)$ can suffer a holomorphic anomaly. One of the main results of~\cite{GJFmock} is a precise formula for this holomorphic anomaly. The justification given in that paper is a combination of heuristic arguments about path integrals and applying Stokes' formula in field space, together with carefully-worked examples to fix the proportionality factors. 
A more detailed proof in the case of sigma models is given in~\cite{DPW2020}.
But in the general case the arguments from~\cite{GJFmock}
 would need further improvements in order to provide a ``theorem'' even at physicists' level of rigour, and so we will call it here a ``conjecture'':

\begin{conjecture}[\cite{GJFmock}]\label{conj.gjfA}
  Suppose $\cF$ is an $\cN=(0,1)$ SQFT with cylindrical ends and boundary $\partial \cF$, and that $Z_{RR}(\partial \cF) = 0$. Then deformations of $\cF$ that are ``compactly supported,'' i.e.\ that don't deform the end $\cF_{\gg 0}$, do not effect the value of $Z_{RR}(\cF)$.
Moreover,
 the $\bar\tau$- and $a$-dependence of the conditionally convergent partition function $Z_{RR}(\cF)(\tau,\bar\tau,a)$ are determined entirely by the boundary $\partial\cF$.
In particular, if $\partial \cF$ is superconformal, then $Z_{RR}(\partial \cF)$ has no $a$-dependence, and its $\bar\tau$-dependence is governed by the \define{holomorphic anomaly equation} (up to convention-dependent factors of $\sqrt[4]{-1}$):
  \begin{equation*}
    \sqrt{-8\tau_2} \eta(\tau) \frac{\partial }{\partial \bar\tau}Z_{RR}(\cF)  = (\text{torus one-point function of $\bar{Q}_{\bar z}$ in $\partial \cF$}) 
  \end{equation*}
\end{conjecture}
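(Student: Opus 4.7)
The plan is to mimic the standard argument that the RR partition function of a compact $\cN{=}(0,1)$ theory is holomorphic and $a$-independent, but carefully track the defect that appears at the field-space boundary. The standard argument is that $\partial_{\bar\tau}$ and $\partial_a$, when acting on $Z_{RR}(\cF)$, insert on the torus operators that by the $\cN{=}(0,1)$ algebra are $\bar Q$-exact (schematically $T_{\bar z\bar z}=\{\bar Q,\bar Q_{\bar z}\}$ and $T_{\bar z z}=\{\bar Q,\bar Q_{z}\}$, the latter vanishing identically when $\partial\cF$ is superconformal). In a compact theory the resulting expectation of a $\bar Q$-exact operator vanishes by integration by parts in field space; the whole content of the conjecture is that here there is a boundary contribution at $\Phi\to+\infty$.

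I would split the statement into two claims and attack them separately. For invariance under compactly-supported deformations: a deformation localized in field space can be written as adding a $\bar Q$-closed term to the action, whose variation of $Z_{RR}(\cF)$ is the torus expectation of a $\bar Q$-exact operator multiplied by a cut-off function of $\Phi$ that vanishes for $\Phi\gg 0$. Writing this as a total field-space divergence and applying Stokes, the putative boundary term at $\Phi\to+\infty$ is zero because the cut-off kills it, and there is no contribution at $\Phi\ll 0$ because supersymmetry is spontaneously broken there and the RR partition function of the broken-phase theories vanishes (the expectation value is regulated by the hypothesis $Z_{RR}(\partial\cF)=0$). For the $\bar\tau$- and $a$-dependence: repeat the same Stokes argument, but now with no cut-off; the only surviving contribution is the surface term at $\Phi\to+\infty$, where the theory asymptotes to $\partial\cF\otimes(\text{free scalar multiplet in }\Phi)$. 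Factorizing the path integral on this end, one pulls the torus one-point function of $\bar Q_{\bar z}$ out of $\partial\cF$ and is left with a Gaussian integral over the free scalar multiplet plus the surface integration in $\Phi$.

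The concrete computation of the prefactor $\sqrt{-8\tau_2}\,\eta(\tau)$ is then a finite-dimensional calculation: on the torus with RR spin structure, the free noncompact $(\phi,\psi)$ multiplet contributes a scalar path integral of the form $(\operatorname{vol}\bR)\cdot(\sqrt{\tau_2}\,|\eta|^2)^{-1}$ together with a fermionic Pfaffian $\eta(\tau)$ (the $\bar\psi$ zero mode being soaked up by the $\bar Q_{\bar z}$ insertion and contributing one power of $\sqrt{\tau_2}$ from normalization). Combining these, and using that $\partial_{\bar\tau}$ on the measure $\d\phi\wedge\d\psi$ produces the correct total derivative in $\Phi$, should yield the asserted $\sqrt{-8\tau_2}\,\eta(\tau)$ coefficient, modulo the convention-dependent $\sqrt[4]{-1}$. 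A consistency check is to specialize to examples worked out in~\cite{GJFmock,DPW2020}, such as the noncompact free scalar multiplet and the cigar sigma model, and confirm that the formula reproduces the known shadow.

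The main obstacle is giving meaning to ``Stokes' theorem in field space'' at anything more than a heuristic level. Field space is infinite-dimensional and carries no honest measure, so the manipulation has to be interpreted as a formal identity of path integrals that has to be checked regulator by regulator. Moreover, the RR sector is delicate: the naive partition function vanishes because of fermion zero modes, and one is really computing the index of a family, so care is required in defining ``$Z_{RR}(\partial\cF)=0$'' as a cohomological statement rather than a numerical one, and in insuring that the Stokes boundary term survives after zero-mode saturation. This is precisely the step where~\cite{GJFmock} resorts to checked examples and~\cite{DPW2020} gives a genuine proof only in the sigma-model case; extending that rigor to general $\cF$ with cylindrical ends is the content that keeps the statement labeled a conjecture.
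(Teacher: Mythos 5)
This statement is presented in the paper as Conjecture~\ref{conj.gjfA}, not as a theorem: the paper explicitly declines to prove it, explaining that the justification in~\cite{GJFmock} is ``a combination of heuristic arguments about path integrals and applying Stokes' formula in field space, together with carefully-worked examples to fix the proportionality factors,'' that a genuine proof exists only for sigma models (via~\cite{DPW2020}), and that further improvements would be needed even at physicists' rigour. Your sketch reconstructs precisely this heuristic --- $\partial_{\bar\tau}$ and $\partial_a$ inserting $\bar Q$-exact components of the stress tensor, field-space Stokes producing a surviving surface term at $\Phi\to+\infty$, factorization on the cylindrical end isolating the one-point function of $\bar Q_{\bar z}$ in $\partial\cF$, and a free $(\phi,\psi)$-multiplet Gaussian plus zero-mode counting to pin the $\sqrt{-8\tau_2}\,\eta(\tau)$ prefactor --- and you are candid that this is a formal argument in an infinite-dimensional setting, that the RR zero-mode structure is delicate, and that these gaps are exactly what leaves the statement a conjecture. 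That final assessment agrees with the paper's own framing; your account adds a slightly more explicit identification of which $\bar Q$-descendant controls the $a$-dependence (namely $\{\bar Q,\bar Q_z\}$, which kills itself when $\partial\cF$ is superconformal), which is implicit in the paper's comment about $\bar Q_z$ being the trace of the supercurrent. Since neither you nor the paper supplies a proof, there is nothing to fault: you have correctly reproduced the heuristic and correctly located the obstructions.
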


Thus, if $\partial\cF$ is superconformal, the adjusted partition function $Z'_{RR}(\cF) = Z_{RR}(\cF)\eta(\tau)^w$ is a real-analytic, but not holomorphic, modular form of weight $w$, where $w = c_R - c_L$ is the gravitational anomaly of $\cF$.
Any real-analytic modular form $\hat{f}(\tau,\bar\tau)$ has a \define{holomorphic part} $f(\tau)$, defined by analytically continuing and then taking a limit
$$ f(\tau) = \lim_{\bar\tau \to -i\infty} \hat{f}(\tau,\bar\tau),$$
assuming the limit exists. 
This is an example of a \define{generalized mock modular form}, with \define{shadow} the complex conjugate of $\sqrt{-8\tau_2}\frac{\partial \hat f}{\partial \bar\tau}$. 
(An excellent reference on many aspects of mock modularity is \cite{MR3729259}.)
It is honestly \define{mock modular} if the shadow is (weakly) holomorphic. 
In particular, suppose that $\partial \cF$ is a purely antiholomorphic SCFT (i.e.\ all of its fields are antichiral). Then the torus one-point function of $\bar{Q} = \bar{Q}_{\bar z}$ is antiholomorphic, and so
$$ f(\tau) = \eta(\tau) \lim_{\bar\tau \to -i\infty} Z_{RR}(\cF)(\tau,\bar\tau)$$
will be a weight-$1/2$ mock modular form (with multiplier), and shadow (the complex conjugate of) the torus one-point function of $\bar{Q}$ in $\partial\cF$.

The analysis in~\cite{GJFmock} furthermore suggests:
\begin{conjecture}[\cite{GJFmock}]\label{conj.gjfB}
  Suppose that $\cF$ as in Conjecture~\ref{conj.gjfA}, with $\partial \cF$ superconformal. Then the holomorphic part of $Z_{RR}(\cF)$ exists (the limit converges). Its $q$-expansion is the index of the $S^1$-equivariant SQM model $\cH(\cF)$ formed by compactifying $\cF$ on an appropriate Ramond circle called the \define{Tate curve}. (The compactification explicitly breaks $\SL_2(\bZ)$-modularity.) This index lives in $\bZ(\!(q)\!)$ up to a correction given by an Atiyah--Patodi--Singer invariant of $\partial\cF$. Because of the extra time-reversal symmetry of $\cH(\cF)$ (c.f.\ \S3.2.2 of~\cite{GPPV}), this APS invariant is just a half-integer related to a certain ``mod-2 index'' of $\partial\cF$.
\end{conjecture}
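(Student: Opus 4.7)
The plan is to establish the four claims — existence of the holomorphic limit, identification with a Tate-curve Witten genus, APS correction, and mod-$2$ refinement — in that order. For existence, I invoke Conjecture~\ref{conj.gjfA}: since $\partial\cF$ is superconformal, the torus one-point function of $\bar Q_{\bar z}$ driving $\partial_{\bar\tau}Z_{RR}(\cF)$ is purely antiholomorphic in $\bar\tau$, with growth at the cusp controlled by the central charge of $\partial\cF$. Antidifferentiating in $\bar\tau$ presents $Z_{RR}(\cF)$ as an Eichler-type integral of an antiholomorphic cusp-like form plus a holomorphic $q$-series; the limit $\bar\tau\to -i\infty$ kills the former and projects onto the latter, producing the holomorphic part $f(\tau)$.

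Next, as $\bar\tau\to -i\infty$ with $\tau$ held fixed, the Ramond worldsheet torus degenerates to the Tate curve, namely a Ramond circle of circumference proportional to $\tau_2$ carrying an $S^1$-rotation generated by $L_0-c/24$. The standard Hamiltonian formula $Z_{RR}(\cF)=\tr_{\cH_R}\bigl((-1)^F q^{L_0-c/24}\bar q^{\bar L_0-\bar c/24}\bigr)$ survives the degeneration: after projecting onto right-moving supersymmetric ground states (those with $\bar L_0=\bar c/24$), it becomes the equivariant Witten index $\tr_{\cH(\cF)}\bigl((-1)^F q^{L_0-c/24}\bigr)$ of the $\cN{=}1$ SQM obtained by quantising $\cF$ on this circle. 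Matching this against $f(\tau)$ coefficient by coefficient, with the spin-structure bookkeeping that explicitly breaks $\SL_2(\bZ)$-modularity, delivers the identification with the Tate-curve Witten genus.

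Third, the noncompactness of $\cF$ forces the Hamiltonian of $\cH(\cF)$ to have a continuous spectrum coming from the cylindrical end $\bR_{\gg 0}\times\partial\cF$ in field space, so the naive supertrace is only conditionally convergent and need not be integral. I appeal to the Atiyah--Patodi--Singer framework for Dirac-type operators on manifolds with cylindrical ends, applied here to the infinite-dimensional field space fibred over $\bR$ by $\cF(\Phi=x)$: the regularised supertrace decomposes into an interior contribution assembled from the honest Witten indices of the compact fibres $\cF(\Phi=x)$ (hence lying in $\bZ(\!(q)\!)$ at each power of $q$) plus a boundary APS defect of the asymptotic Dirac operator on $\partial\cF$.

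Finally, quantisation on a \emph{real} Ramond circle equips $\cH(\cF)$ with an antiunitary time-reversal involution commuting with the $\cN{=}1$ supercharge; this reality structure makes the asymptotic Dirac operator on $\partial\cF$ a real (or pseudo-real) operator in the sense of Atiyah--Singer, whose $\eta$-invariant takes values in $\tfrac12\bZ/\bZ$ and reduces mod $\bZ$ to the classical Atiyah--Singer mod-$2$ index, as reviewed in \S3.2.2 of~\cite{GPPV} and in the Freed--Hopkins classification of reflection-positive invertible phases. The principal obstacle is the third step: defining rigorously the equivariant Witten index of a noncompact SQM arising from a noncompact SQFT, and identifying its APS defect with the $\bar Q_{\bar z}$ one-point function appearing in Conjecture~\ref{conj.gjfA}, means upgrading the physical heuristics of~\cite{GJFmock} to an honest infinite-dimensional index theorem, which is where essentially all the analytic work of the proof lives.
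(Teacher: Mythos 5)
You are trying to prove a statement that the paper deliberately leaves unproven: Conjecture~\ref{conj.gjfB} is imported from~\cite{GJFmock}, and the surrounding text explicitly says that the justification given there is a combination of path-integral heuristics and a Stokes-type argument in field space which ``would need further improvements in order to provide a `theorem' even at physicists' level of rigour'' --- hence its status as a conjecture. Your proposal retraces essentially that same heuristic chain (holomorphic anomaly equation from Conjecture~\ref{conj.gjfA} to extract the holomorphic part, a Hamiltonian trace over the Ramond-circle/Tate-curve compactification, an APS-type boundary defect, time reversal for the half-integer/mod-2 refinement), so it does not go beyond what the paper already asserts, and as a proof it has a genuine gap that you yourself flag: the ``infinite-dimensional APS index theorem'' for the SQM $\cH(\cF)$ with a cylindrical end in field space does not exist, and since neither $\SQFT^\bullet$ nor the notions of ``compact'' and ``cylindrical ends'' are mathematically defined here, there is currently no framework in which your decomposition of the regularised supertrace into an interior $\bZ(\!(q)\!)$-valued part plus a boundary APS defect can even be formulated, let alone proved. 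Conditioning on Conjecture~\ref{conj.gjfA} does not repair this, because that statement is itself conjectural and does not supply the spectral analysis of $\cH(\cF)$ that your third step needs.

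Two further soft spots. First, you conflate the holomorphic-part limit with a geometric degeneration: the holomorphic part is defined by analytically continuing $Z_{RR}(\cF)(\tau,\bar\tau)$ to independent complex variables and then sending $\bar\tau\to -i\infty$, and along this continuation there is no actual worldsheet torus, so ``the Ramond torus degenerates to the Tate curve'' is not geometry but precisely the assertion, made in the conjecture itself, that this limit agrees with the separately defined Ramond-circle compactification --- you cannot use it as an argument for the conjecture. Second, your ``projection onto right-moving supersymmetric ground states'' invokes the usual boson/fermion pairing of $\bar L_0>\bar c/24$ modes, but for a theory with cylindrical ends the right-moving spectrum is continuous down to $\bar L_0=\bar c/24$, and it is exactly the spectral asymmetry of this continuum that produces both the holomorphic anomaly and the failure of integrality; asserting that the Hamiltonian trace formula ``survives the degeneration'' therefore begs the very question the APS correction is meant to answer. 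Your time-reversal/mod-2 remark is consistent with the intended statement (cf.\ \S3.2.2 of~\cite{GPPV}), but it too is an expectation rather than something your argument derives.
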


The following example was the primary motivation for~\cite{WittenTMF,GJFmock}. Take a K3 surface with $24$ punctures, and arrange a B-field on $M$ so that its flux near each puncture satisfies $\int_{S^3} H/2\pi = 1$. Now form an $\cN=(0,1)$ sigma model with target this noncompact $4$-manifold. (The $(0,1)$ worldsheet supersymmetry enhances to $(0,4)$ by using the hyperka\"ahler structure. The B-field is needed to cancel an anomaly that would otherwise be present because of the mismatched fermions \cite{MR796163}.) The result is a noncompact SCFT $\cF$ with cylindrical ends. The boundary theory $\partial \cF = \overline{\Fer(3)}{}^{\oplus 24}$ is a direct sum of $24$ copies of the same theory, one for each puncture. The contribution from each puncture is a purely antiholomorphic SCFT $\overline{\Fer(3)}$ consisting of three antichiral Majorana--Weyl fermions $\bar\psi_1,\bar\psi_2,\bar\psi_3$ and supersymmetry $\bar G = {:}\bar\psi_1\bar\psi_2\bar\psi_3{:}$, up to convention-dependent factors of $\sqrt[4]{-1}$. The torus one-point function of $\bar G$ in each summand is $\eta(\bar\tau)^3$.
Thus the K3 surface produces a mock modular form with shadow $24\eta(\tau)^3$, namely the function $H(\tau)$ that we started with in Section~\ref{sec.intro}.

$\overline{\Fer(3)}{}^{\oplus 24}$ is not the only possible boundary theory for producing $H(\tau)$, and is not the one we will end up using. There is a famous holomorphic SCFT called $V^{f\natural}$ constructed in~\cite{MR2352133}, with automorphism group $\Aut(V^{f\natural}) = \Co_1$ and central charge $c_L = 12$ (and $c_R = 0$). We will work instead with its reflection to an antiholomorphic SCFT $\overline{V^{f\natural}}$. The supersymmetry together with the antiholomorphicity imply that the Ramond-sector partition function $Z_{RR}(\overline{V^{f\natural}})$ simply counts the Ramond-sector ground states, of which there are $24$. Thus the torus one-point function of $\bar{Q}$ in $\overline{V^{f\natural}} \otimes \overline{\Fer(3)}$ is
$$ \langle \bar{Q} \rangle_{\overline{V^{f\natural}} \otimes \overline{\Fer(3)}} = \langle 1 \rangle_{\overline{V^{f\natural}}} \, \langle \bar{Q} \rangle_{\overline{\Fer(3)}} + \langle \bar{Q} \rangle_{\overline{V^{f\natural}}} \, \langle 1 \rangle_{\overline{\Fer(3)}}
= 24 \eta(\bar\tau)^3 + 0.
$$
As we will explain in \S\ref{subsec.tmf}, Conjecture~\ref{conjecture.ST} implies
 that $\overline{V^{f\natural}} \otimes \overline{\Fer(3)}$ is a boundary of an $\cN=(0,1)$ SQFT $\cF$ with cylindrical ends, thus providing another source of mock modular forms with shadow $24 \eta(\tau)^3$.

\subsection{$\SQFT^\bullet$ as an $\Omega$-spectrum}\label{subsec.spectrum}

The story in the previous section applies in the presence of a finite group $G$ of flavour symmetries. Namely, suppose that $\cF$ is a noncompact SQFT with cylindrical ends $\partial \cF$ and $G$-flavour symmetry. After averaging, we may assume that $\Phi$ is $G$-invariant. Then $G$ acts on $\partial \cF$, and so the right-hand side of the holomorphic anomaly equation~\ref{conj.gjfA} may be twisted and twined by elements of $G$, and we predict that it will be the holomorphic anomaly for the corresponding twisted-twined partition function of $\cF$.
After taking holomorphic parts, we would produce a (generalized) mock modular form valued in characters of $G$.

Thus we can answer Question~\ref{question.modularity} if we can produce an SQFT $\cS$ which is not just the boundary of an SQFT $\cF$ with cylindrical ends, but is such a boundary compatibly with an $\rM_{24}$-flavour symmetry. By exchanging $\cF$ with the $\bR$-family $x \mapsto \cF(x)$, we are equivalently asking whether $\cS$ can be deformed continuously, in an $\rM_{24}$-equivariant way, to an SQFT with 
 spontaneous supersymmetry breaking: whether $\cS$ is in the ``spontaneous-supersymmetry-breaking phase'' of SQFTs with $\rM_{24}$ flavour symmetry, or whether its supersymmetry is ``protected'' by the $\rM_{24}$-symmetry.

This question---whether some object can be continuously deformed into some other object---is the fundamental question of homotopy theory, and we will try to answer it by adopting homotopical techniques. Specifically, we will see that the space of SQFTs is not merely a topological space, but rather has extra structure making it into a ``spectrum.'' The algebraic topology of spectra is more rigid than the algebraic topology of spaces, and there are more tools available. The construction of a spectrum $\SQFT^\bullet$ described in this section is closely related to a construction in 
\cite{1610.00747} (see also
\cite{MR2079378,MR2742432,MR2742433}).

In order to describe this spectrum structure, we will need to discuss in a bit more detail the gravitational anomalies that $(1{+}1)$-dimensional (S)QFTs can enjoy. Specifically, we will distinguish two versions of the word ``quantum field theory,'' which we call ``absolute'' versus ``anomalous.''
For related recent discussion, see e.g.~\cite{MR3969923,me-TopologicalOrders}.

For a QFT to be \define{absolute}, it must come with extra data which is of debatable physical content. An absolute QFT has an absolutely-defined partition function, with no ambiguity about, say, the ``zero'' in the energy scale, or about the normalization of the path integral measure. An absolute QFT should have well-defined (super) Hilbert spaces, with no projectivity in the action by isometries: vectors in this Hilbert space have well-defined phases. Since the partition function is part of the data of an absolute QFT, symmetries of absolute QFTs never have 't~Hooft anomalies. The group of symmetries of an absolute quantum mechanics model is a subgroup of the unitary group rather than the projective unitary group. The usual functorial definition of topological QFTs, building on the original definition of~\cite{MR1001453}, is an attempt to model absolute TQFTs.

For comparison, an \define{anomalous} QFT is one that tolerates many phase ambiguities in its values. To tolerate an ambiguity in the meaning of ``zero'' energy, anomalous QFTs have ``partition functions'' that are not functions, but rather sections of possibly-nontrivial line bundles. To tolerate an ambiguity in the phase of a pure state, anomalous QFTs assign projective Hilbert spaces rather than honest Hilbert spaces. Symmetries of anomalous QFTs typically have nontrivial 't~Hooft anomalies. QFTs defined in terms of their algebras of operators are typically anomalous, and more data would to be needed in order to promote them to absolute QFTs. The simplest example of this is the Stone--von Neumann theorem, which says that an algebra of observables determines the Hilbert space functorialy only up to a phase ambiguity, i.e.\ only as a projective Hilbert space.

In the case of $(1{+}1)$-dimensional QFTs, the obstruction to promoting from anomalous to absolute is the \define{gravitational anomaly} $w = c_R - c_L$ mentioned in \S\ref{subsect.holomorphicanomaly}. If this anomaly vanishes, then there are still choices. For fermionic QFTs, it turns out that there are two choices (up to isomorphism), which differ by the parity of the Ramond-sector Hilbert space. (The parity of the Neveu--Schwarz sector is fixed by the state-operator correspondence.) There are further anomalies and choices if one wants to promote from anomalous to absolute in the presence of a symmetry. For example, symmetries of the operator algebra typically act only projectively, or more precisely via a spin lift, on the Ramond-sector, and there is the standard 't~Hooft anomaly cocycle. All together, the space of anomalies for fermionic $(1{+}1)$-dimensional QFTs is the $4$-layer spectrum  described in \S5.6 of~\cite{MR3978827} with homotopy groups $\bZ,\bZ_2,\bZ_2,0,\bZ$. This spectrum is called ``$\mathrm{fGP}^\times_{\leq 4}$'' in~\cite{MR3978827}, and the convention in that paper is that the homotopy groups live in degrees 
\begin{gather*}
(\mathrm{fGP}^\times_{\leq 4})^{3}(\pt) = \bZ,\qquad (\mathrm{fGP}^\times_{\leq 4})^{2}(\pt) = \bZ_2, \qquad (\mathrm{fGP}^\times_{\leq 4})^{1}(\pt) = \bZ_2,\\ (\mathrm{fGP}^\times_{\leq 4})^{0}(\pt) = 0, \qquad (\mathrm{fGP}^\times_{\leq 4})^{-1}(\pt) = \bZ,
\end{gather*} where by definition $(\mathrm{fGP}^\times_{\leq 4})^{\bullet}(\pt) = \pi_{-\bullet} \mathrm{fGP}^\times_{\leq 4}$, and, as mentioned already in \S\ref{subsec.notation}, we will try always to use cohomological degree conventions. The gravitational anomaly itself lives (after multiplication by $2$, since $c_R - c_L$ is a half-integer) in the $\bZ$ in cohomological degree $3 = (1{+}1) + 1$, and the $\bZ_2$ in cohomological degree $2$ corresponds to the two choices for promoting a QFT with vanishing gravitational anomaly to an absolute QFT.

The simplest example of a $(1{+}1)$-dimensional QFT  whose gravitational anomaly does not vanish is the theory $\Fer(1)$ of a single chiral Majorana--Weyl fermion. This is a holomorphic conformal field theory, with central charges $c_L = \frac12$ and $c_R = 0$. It can be made into an $\cN=(0,1)$ superconformal field theory by declaring that the supersymmetry operator acts trivially. Since $c_R \neq c_L$, this SCFT cannot be promoted to an absolute QFT. For example, its ``partition function'' is not a function, but rather a section of a nontrivial line bundle on the moduli space of spin Riemann surfaces called the \define{Pfaffian line}~\cite{MR915823,MR1186039}. (It is in fact best thought of as a bundle of superlines, with fibres isomorphic either to $\bC^{1|0}$ or $\bC^{0|1}$ depending on whether the spin Riemann surface is or is not the boundary of a spin 3-manifold.)
The tensor product (aka stacking) of $n$ copies of $\Fer(1)$ produces an $\cN=(0,1)$ SCFT $\Fer(n) = \Fer(1)^{\otimes n}$ with $(c_L,c_R) = (\frac n 2, 0)$.

\begin{definition}
$\SQFT^n$ is the space of compact unitary $\cN=(0,1)$ SQFTs whose anomaly is identified with the anomaly for $\Fer(n)$.
\end{definition}

For example, $\SQFT^0$ is the space of absolute SQFTs. The gravitational anomaly of $\cF \in \SQFT^n$ is $w = c_R - c_L = -\frac n 2$, but to give a point in $\SQFT^n$ requires more data than just an anomalous SQFT with this gravitational anomaly: one must give some ``parity'' information about the ``Ramond-sector Hilbert space'' of $\cF$, which is not a Hilbert space but rather an object of a possibly non-trivial category determined by $\Fer(n)$ (namely, the category of Ramond-sector vertex modules for the chiral algebra of $\Fer(n)$).

The symmetric group acts naturally on $\Fer(n)$ by permuting the constituent free fermions, and hence on $\SQFT^n$. Indeed, as an anomalous SQFT, $\Fer(n)$ carries an action by the orthogonal group $\rO(n)$. (The group acting on $\Fer(n)$ with trivialized 't~Hooft anomaly is $\Spin(n)$.) More generally, one can functorially define a holomorphic CFT $\Fer(V)$ for any real vector space $V$ with positive-definite inner product, and so we could have defined a space $\SQFT^V$ for any $V$, which is noncanonically isomorphic to $\SQFT^{\dim V}$.

There is a canonical isomorphism \cite{MR2742433}
$$ \Fer(V) \otimes \Fer(W) \cong \Fer(V\oplus W).$$
This implies that tensor product (stacking) of SQFTs provides a commutative and associative operation
$$ \otimes : \SQFT^V \times \SQFT^W \to \SQFT^{V\oplus W}$$
which is compatible with the actions by $\rO(V) \times \rO(W) \subset \rO(V\oplus W)$. We warn that the ``commutativity'' is subtle. Indeed, given $\cF \in \SQFT^V$ and $\cF' \in \SQFT^W$, to compare $\cF \otimes \cF'$ with $\cF' \otimes \cF$, one must use the isomorphism $\SQFT^{V\oplus W} \cong \SQFT^{W\oplus V}$ coming from the isomorphism $\Fer(V\oplus W) \cong \Fer(W \otimes V)$ that permutes the fermions. Even if $V = W$, this isomorphism is nontrivial, and may have a nontrivial anomaly.

Thus one can think of $\SQFT^\bullet$ all together as a sort of ``graded commutative monoid.'' With a bit of work, one can define a direct sum operation on each $\SQFT^n$, so that $\SQFT^\bullet$ is a graded commutative ring-without-negation. Rather than trying to define direct sums directly, we will see that each $\SQFT^n$ is in fact a commutative group up to homotopy: we will give $\SQFT^\bullet$ the structure of a (commutative orthogonal) $\Omega$-spectrum, with one small modification.

By definition, an \define{$\Omega$-spectrum} $\cE^\bullet$ is a sequence of spaces $\cE^0,\cE^1,\cE^2,\dots$, each equipped with a basepoint $0 \in \cE^n$, together with homotopy equivalences $\cE^n \isom \Omega\cE^{n+1}$, where $\Omega\cE^{n+1}$ means the space of loops in $\cE^{n+1}$ that start and end at the basepoint $0$. 
In particular, each $\cE^n$ is an infinite loop space (i.e.\ a homotopically coherent abelian group). The spectrum $\cE^\bullet$
 is \define{orthogonal} when the grading is not just by integers but by real vector spaces $V$ as above, and the homotopy equivalence $\cE^V \isom \Omega^k \cE^{V\oplus\bR^k}$ is compatible with the $\rO(V)\times \rO(k)$ action. Let $X$ be a space. The \define{$\cE^\bullet$-cohomology of $X$}
is by definition
$$ \cE^\bullet(X) = \pi_0 \operatorname{maps}(X, \cE^{\bullet}).$$
This is an abelian group because of the 
homotopy equivalence $\cE^n \isom \Omega\cE^{n+1}$, which provides, for any $s \geq 0$, an isomorphism
$$ \cE^\bullet(X) \cong \pi_s \operatorname{maps}(X, \cE^{\bullet+s}).$$

For our spectrum $\SQFT^\bullet$, we want to choose the basepoint $0 \in \SQFT^n$ to be the ``zero QFT.'' This is the TQFT that assigns ``$0$'' to every nonempty input: its partition function is zero, its Hilbert space is zero-dimensional, etc. This can be thought of as having any anomaly that one so chooses. When a physicist says ``supersymmetry is spontaneously broken in $\cF$,'' a mathematician should hear ``$\cF$ flows to $0$ under RG flow,'' where ``RG flow'' is a canonically-defined action by the monoid $\bR_{\geq 0}$ on $\SQFT^n$ (and, debateably, by the group $\bR$), and ``$\cF$ flows to $\cF_{\mathrm{IR}}$'' means that $\cF_{\mathrm{IR}}$ is the limit of the RG-flow starting at $\cF$. 

But some physicists will rightly quibble with the idea of the ``zero QFT,'' and will instead take the phrase ``supersymmetry is spontaneously broken'' as a primitive notion. Moreover, some mathematical attempts to define the notion of ``quantum field theory,'' including the functorial ones suggested in~\cite{MR2079378}, include this zero QFT as a point on $\SQFT^n$, but others of a more operator-algebraic nature (e.g.~\cite{MR2742433}) do not. (Indeed, if one follows the ideas of~\cite{MR2742433}, then the definition of $\SQFT^n$ should be the space of operator-algebraically-defined SQFTs equipped with a Morita equivalence to $\Fer(n)$. There is a ``zero'' operator algebra, but it is not Morita equivalent to a nonzero algebra.)
For this reason, we will modify our notion of spectrum to tolerate a subspace of basepoints, rather than a single basepoint. The loop space $\Omega\cE^{n+1}$ then should consist of loops that begin and end inside this subspace, and the homotopy groups defining $\cE^\bullet$-cohomology should be relative homotopy groups. Otherwise there is no real difference. And if the reader's model of ``quantum field theory'' includes the zero QFT, then the reader may use the usual notion of $\Omega$-spectrum in what follows.

Let us parameterize paths by the real line $\bR$: a point in $\Omega\SQFT^n$ is an $\bR$-family $x \mapsto \cF(x)$ in $\SQFT^n$ such that supersymmetry is spontaneously broken for all $x \ll 0$ and for all $x \gg 0$. (Or, for those readers who have a zero QFT, use instead families that approach $0$ as $x \to \pm \infty$. One can promote the former type of loop to the latter by turning on an RG flow whose strength increases as $x \to \pm \infty$.)

Then the map $\SQFT^n \to \Omega\SQFT^{n+1}$ couldn't be simpler. As above, let $\Fer(1)$ denote the holomorphic CFT of a single chiral Majorana--Weyl fermion $\lambda$. Above we promoted this CFT to an $\cN=(0,1)$ SCFT by declaring that the supersymmetry operator was $0$. But, at the cost of conformal invariance, we may give it other $\cN=(0,1)$ structures. Specifically, the supercurrent $\bar{Q} = x\lambda$ defines an $\cN=(0,1)$ supersymmetry on $\Fer(1)$, which is not superconformal. Let $\Fer(1)(x)$ denote the SQFT $(\Fer(1), \bar{Q} = x\lambda)$. Comparing with \S\ref{subsect.holomorphicanomaly}, $\Fer(1)(x)$ is exactly the ``fibre'' over $-x$ of the operator $\Phi = 0$ in  the vacuum theory $1 \in \SQFT^0$ (with one-dimensional Hilbert space and partition function identically equal to $1$).

Then the map $\SQFT^n \to \Omega\SQFT^{n+1}$ is:
$$ \cF \mapsto \cF \otimes \Fer(1)(x).$$
If $x \neq 0$, supersymmetry spontaneously breaks in $\Fer(1)(x)$ and hence in $\cF \otimes \Fer(1)(x)$. So this family $x \mapsto \cF \otimes \Fer(1)(x)$ is indeed a point in $\Omega\SQFT^{n+1}$. (In fact, it is a point even for the stricter version where ``$0$'' is a meaningful QFT: the action of RG flow on $\Fer(1)(x)$ simply rescales $x \mapsto e^s x$, where $s \to \infty$ is the IR limit, and so the $x \to \pm \infty$ limits of $\Fer(1)(x)$ are both the zero QFT.)

We must now prove that this map $\cF \mapsto \cF \otimes \Fer(1)(x)$ is a homotopy equivalence. Consider the ``dynamicalization'' map $\Omega\SQFT^{n+1} \to \SQFT^n$  that takes a family $x \mapsto \cF(x)$ in $\Omega\SQFT^{n+1}$ and promotes the parameter $x$ to a dynamical scalar multiplet, producing the SQFT that, as in \S\ref{subsect.holomorphicanomaly}, we will call $\int_{\phi,\psi} \cF(\phi)$. We claim without proof that $\int_{\phi,\psi} \cF(\phi)$ is compact for $(x \mapsto \cF(x)) \in \Omega\SQFT^{n+1}$: the justification is that, since supersymmetry is spontaneously broken, this is essentially a compactly-supported family; but more work would need to be done to justify this claim, and one may have to first modify the family by RG-flowing $\cF(x)$ by some finite amount that grows as $x \to \pm \infty$.

To prove that $\cF \mapsto \{x \mapsto \cF \otimes \Fer(1)(x)\}$ is a homotopy equivalence, it suffices to prove that its two compositions with $\int_{\phi,\psi}$ are both homotopic to the identity. We do not need to confirm any higher homotopy coherence: in particular, we do not need to show that the homotopies to the identity are compatible in any way. (We would need to prove such compatibilities if we wanted to claim that $\int_{\phi,\psi}$ was the homotopy-coherent inverse to tensoring with $\Fer(1)(-)$.)

First, consider the composition
$$ \cF \mapsto \cF \otimes \Fer(1)(x) \mapsto \int_{\phi,\psi} \cF \otimes \Fer(1)(\phi).$$
The copy of $\cF$ comes out of the integral, and so it suffices to show that $\int_{\phi,\psi} \Fer(1)(\phi)$ is continuously deformable to the vacuum theory $1 \in \SQFT^0$. This is a special case of the philosophy mentioned in \S\ref{subsect.holomorphicanomaly} that the total space of a family should be recoverable from dynamicalizing the parameter. In this case, the SQFT $\int_{\phi,\psi} \Fer(1)(\phi)$ contains the following fields. First, there is the chiral fermion $\lambda \in \Fer(1)$. Next, there is a full scalar boson~$\phi$, which is the bosonic component of the superfield that dynamicalizes $x$. Finally, there is the superpartner $\psi$ of $\phi$, which is an antichiral fermion. 
The supersymmetry operator in components is
$$ \bar{Q} = \lambda \bar\partial \phi + \psi \partial \phi.$$
The first summand is from the supersymmetry $\lambda x$ in $\Fer(1)(x)$, and the second summand says that $\psi$ is the superpartner of $\phi$.
The Lagrangian contains the standard massless terms $\lambda\bar\partial\lambda$, $\phi \Delta\phi$, and $\psi \partial\psi$. It also contains a correction coming from $\bar{Q}$, which ends up being $\lambda \psi + \phi^2$. (The Lagrangian for $\Fer(1)(x)$ had a correction like $\lambda \bar{Q}[x] + x^2$, and when we replace $x$ with $\phi$, $\bar{Q}[x]$ becomes $\psi$.) All together, we can recognize $\int_{\phi,\psi} \Fer(1)(\phi)$ as the free theory consisting of a massive Majorana fermion and a massive scalar boson. This free theory is well-known to flow to the trivial vacuum theory in the IR, which is to say that RG flow implements a homotopy $\int_{\phi,\psi} \Fer(1)(\phi) \simeq 1$.

The other composition is
$$ \cF(x) \mapsto \int_{\phi,\psi} \cF(\phi) \mapsto \int_{\phi,\psi} \cF(\phi) \otimes \Fer(1)(x).$$
We have not tried to be precise about the meaning of ``family of SQFTs.'' For the purposes of this article, let us suppose that the field content (and any other kinematical information) of $\cF(x)$ is independent of $x$, and only the Lagrangian and the supersymmetry (and any other dynamical information) varies with $x$. This is not unreasonable: if there is a field that exists only for certain values of $x$, one can extend it to a field that exists for all $x$ but is very massive except at the values of $x$ for which it was earlier defined. Assuming we have topologized the space of SQFTs in a way that cares primarily about the effective low-energy field theory, 
turning on very massive fields should be a very small deformation, and so should not change the homotopy type of the family $\cF(-)$. Then the field content on the right-hand side consists of: the fields in $\cF(-)$, the scalar $\phi$, the antichiral fermion $\psi$, and the chiral fermion $\lambda$. Writing $L_{\mathrm{LHS}}(x)$ and $\bar{Q}_{\mathrm{LHS}}(x)$ for the Lagrangian and supersymmetry operators in $\cF(x)$, and writing $\bar{Q}_{\mathrm{LHS}}'(x) = \frac{\partial}{\partial x}\bar{Q}_{\mathrm{LHS}}$, the Lagrangian and supersymmetry operators on the right-hand side are:
\begin{gather*}
  L_{\mathrm{RHS}} = L_{\mathrm{LHS}}(\phi) + \bar{Q}_{\mathrm{LHS}}'(\phi) \partial\psi  + \phi \Delta \phi +  \psi \partial \psi + \lambda \bar\partial \lambda, \qquad
  \bar{Q}_{\mathrm{RHS}} = \bar{Q}_{\mathrm{LHS}}(\phi) + \psi \partial \phi + x\lambda.
\end{gather*}
Now consider deforming this SQFT by the superpotential $W = f(\phi)\lambda$ for some polynomial $f \in \bR[x]$. This deformation is allowed: it does not destroy compactness, nor does it destroy the spontaneous supersymmetry breaking. The deformation changes the Lagrangian to:
$$ L_{\mathrm{deformed}} = L_{\mathrm{RHS}} + \bar{Q}_{\mathrm{RHS}}[W] + \left(\frac{\partial W}{\partial\lambda}\right)^2.$$
Since the original $\bar{Q}_{\mathrm{LHS}}(x)$ was a function in $x$, neither $W$ nor $\bar{Q}_{\mathrm{LHS}}(\phi)$ have any $\partial\phi$-dependence, and so commute. Thus we have:
$$ L_{\mathrm{deformed}} = L_{\mathrm{RHS}} + \psi f'(\phi) \lambda + x  f(\phi) + f(\phi)^2.$$
Taking $f(\phi) = -2\phi$ gives
$$ L_{\mathrm{deformed}} = \left[ L_{\mathrm{LHS}}(\phi) + \bar{Q}_{\mathrm{LHS}}'(\phi) \partial\psi\right] + \left[ \phi \Delta \phi +  \psi \partial \psi + \lambda \bar\partial \lambda + \psi \lambda + -2x \phi + \phi^2 \right].$$
Focusing on the second bracketed expression, we see that $\phi$ now has a mass with vacuum expectation value $x$ and the full fermion $(\lambda,\psi)$ is also massive. So, performing the path integral in those variables first, the IR behaviour of the deformed theory is described simply by setting $\phi = x$ and $\lambda=\psi = 0$, and we recover the original theory $\cF(x)$.

In summary, we have outlined a proof of the following result. We call a ``conjecture'' because we did not attempt to mathematically define or topologize the spaces $\SQFT^\bullet$, and because even at a physicists' level of rigour we left some questions about the details of these spaces. 
\begin{conjecture}
  The spaces $\SQFT^V$ of compact unitary $\cN=(0,1)$ SQFTs with anomaly identified with the anomaly of $\Fer(V)$ compile into a commutative ring orthogonal $\Omega$-spectrum $\SQFT^\bullet$. \qed
\end{conjecture}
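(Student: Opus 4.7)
The plan is to unpack the data of a commutative ring orthogonal $\Omega$-spectrum into three verifications and tackle each in sequence, using the ingredients (basepoints given by spontaneously-SUSY-breaking SQFTs, tensor product $\otimes$, and the functor $V \mapsto \Fer(V)$) laid out above. By induction on $\dim W$, the structure-map condition reduces to showing that for each $V$ the map
$$ \SQFT^V \longto \Omega \SQFT^{V \oplus \bR}, \qquad \cF \mapsto \bigl( x \mapsto \cF \otimes \Fer(1)(x)\bigr)$$
is a weak equivalence, where $\Omega$ denotes loops based at the sub-space of theories with spontaneously broken SUSY. The remaining multiplicative and orthogonal structure is essentially formal from the canonical isomorphisms $\Fer(V \oplus W) \cong \Fer(V) \otimes \Fer(W)$ and from functoriality of $V \mapsto \Fer(V)$ on the groupoid of positive-definite inner-product spaces.

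The core of the proof is to construct a homotopy inverse to this structure map via dynamicalization: the map $\int_{\phi,\psi} : \Omega \SQFT^{V\oplus\bR} \to \SQFT^V$ sending $\cF(-)$ to $\int_{\phi,\psi} \cF(\phi)$, where $(\phi,\psi)$ is a scalar supermultiplet. The loop hypothesis -- that SUSY is broken as $x \to \pm\infty$ -- together with a preliminary RG flow of increasing strength near the ends if needed, should ensure that the dynamicalized theory is again compact. Next I would verify that the two composites are each homotopic to the identity. The composite $\cF \mapsto \int_{\phi,\psi} \cF \otimes \Fer(1)(\phi)$ factors the external $\cF$ out of the integral, reducing to $\int_{\phi,\psi} \Fer(1)(\phi) \simeq 1$ via RG flow on a free massive scalar plus free massive fermion. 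The composite $\cF(-) \mapsto \int_{\phi,\psi} \cF(\phi) \otimes \Fer(1)(x)$ is handled by the superpotential deformation $W = -2\phi\lambda$ described in the section above: integrating out the now-massive $(\phi,\lambda,\psi)$ pins $\phi$ to $x$ and recovers $\cF(x)$. Since only an equivalence and not a coherent inverse is needed, no higher coherences between these two homotopies have to be checked.

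For the multiplicative structure, tensor product $\otimes : \SQFT^V \times \SQFT^W \to \SQFT^{V \oplus W}$ supplies the ring multiplication, with unit $1 \in \SQFT^0$, and the ``braiding subtlety'' flagged above for $V = W$ is precisely what the orthogonal-spectrum formalism demands: the $\rO(V \oplus W)$-action interpolates between $\otimes$ and its opposite, which is automatic from the $\Spin$-equivariant structure of $\Fer(V \oplus W)$. Compatibility of $\otimes$ with the structure maps is the associativity $(\cF \otimes \cG) \otimes \Fer(1)(x) \cong \cF \otimes (\cG \otimes \Fer(1)(x))$, which is built into the definition.

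The hard part, and the reason this is stated as a conjecture rather than a theorem, is foundational rather than homotopical: one must first topologize $\SQFT^V$ in such a way that ``family of SQFTs,'' ``compact,'' ``spontaneous SUSY breaking,'' and ``RG flow'' all acquire precise meanings and interact as physically expected -- in particular so that RG flow defines a continuous $\bR_{\geq 0}$-action, so that $\int_{\phi,\psi}$ takes loops to compact theories, and so that the deformation by the superpotential $W = -2\phi\lambda$ defines a continuous path. A complete functorial-field-theory definition in the spirit of \cite{MR2742432,MR2742433}, extended to the mildly noncompact setting, would suffice; absent such a framework the arguments above sit at physicists' level of rigour. Every step beyond the definition of the spaces $\SQFT^V$ is, however, a familiar manipulation of free fields and RG flows, so once the foundations are in place the proof should go through as sketched.
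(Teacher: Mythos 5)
Your proposal reproduces the paper's argument essentially step for step: the structure map $\cF \mapsto \cF \otimes \Fer(1)(x)$, the dynamicalization inverse $\int_{\phi,\psi}$, the two composites verified respectively by RG flow on a free massive scalar plus fermion and by the superpotential deformation $W=-2\phi\lambda$, the observation that no higher coherences between the two homotopies are required, and the conclusion that the only obstruction to a theorem is foundational (topologizing the spaces $\SQFT^V$ so that ``compact,'' ``family,'' and ``RG flow'' acquire precise meanings). This is the same proof, decomposed the same way, with the same caveats as the paper's.
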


\subsection{Equivariant $\SQFT^\bullet$ and 't~Hooft anomalies} \label{subsec.anomalies}

Let $G$ be a finite group (or a Lie group or \dots, but we will need only the finite group case).
 The discussion in the previous section applies equally well if one considers SQFTs, and families thereof, which are equipped with a nonanomalous $G$-flavour symmetry. The corresponding spectrum $\SQFT^\bullet_G$ is a $G$-equivariant enhancement of the spectrum $\SQFT^\bullet$: using it, one can assign cohomology groups to spaces equipped with $G$-action. The fundamental reason that $\SQFT^\bullet$ admits an equivariant enhancement is that SQFTs admit automorphisms, and so the collection $\SQFT^n$ of SQFTs with a given gravitational anomaly is not merely a space, but rather a groupoid or stack. 
 (The groupoidal/stacky approach to genuinely equivariant spectra is formalized in \cite{0701916}.)
 If $\cX$ is any stack, then we can consider the space of maps of stacks from $\cX$ to $\SQFT^n$, and evaluate its homotopy groups. Taking $\cX = \bB G$ the classifying stack of the group $G$ gives:
\begin{gather*}
\SQFT^\bullet_G = \operatorname{maps}(\bB G, \SQFT^\bullet), \\ \SQFT^\bullet(\bB G) = \SQFT^\bullet_G(\pt) = \pi_0 \operatorname{maps}(\bB G, \SQFT^\bullet).\end{gather*}

We may also consider SQFTs with $G$-flavour symmetry and prescribed 't~Hooft anomaly~$\omega$. These compile into an orthogonal $\Omega$-spectrum $\SQFT^\bullet_{G,\omega}$.
Because 't~Hooft anomalies add under stacking (i.e.\ under tensor product of SQFTs), $\SQFT^\bullet_{G,\omega}$ is not a ring spectrum, but it is a module spectrum for $\SQFT^\bullet_{G}$.
The algebrotopologists' name for introducing an 't~Hooft anomaly $\omega$ is \define{twisting}: the homotopy groups 
$$ \SQFT^\bullet_{G,\omega}(\pt) = \SQFT^\bullet_\omega(\bB G) = \pi_0  \SQFT^\bullet_{G,\omega}$$
are the ``$\omega$-twisted $G$-equivariant $\SQFT^\bullet$-cohomology of a point.''

Where do 't~Hooft anomalies live? For $(1{+}1)$-dimensional fermionic QFTs, they live in the (extended) \define{supercohomology} $\SH^3(\bB G)$ of the group $G$~\cite{GuWen,WangGu2017}, which is the 3-layer spectrum described in \S5.4 of~\cite{MR3978827} with homotopy groups $$\SH^2(\pt) = \bZ_2, \qquad \SH^1(\pt) = \bZ_2, \qquad \SH^0(\pt) = 0, \qquad \SH^{-1}(\pt) = \bZ,$$ and Postnikov $k$-invariants $\Sq^2 : \bZ_2 \to \bZ_2$ and $\Box_\bZ \circ \Sq^2 : \bZ_2 \to \bZ$, where $\Sq^2$ denotes the second Steenrod squaring operation and $\Box_\bZ$ denotes the integral Bockstein (for the short exact sequence $\bZ \to \bZ \to \bZ_2$).

By definition there is a map $\H^{\bullet+1}(G;\bZ) \to \SH^\bullet(\bB G)$. A long exact sequence shows that this map is an injection in degree $\bullet \leq 3$ (but typically not for larger values of $\bullet$). It is a surjection if $\H^{\bullet-1}(G;\bZ_2)$ and $\H^{\bullet-2}(G;\bZ_2)$ both vanish. In particular, if $G$ is the Schur cover of simple group, then $\SH^3(\bB G) = \H^4(G;\bZ)$.

Actually, as emphasized in~\cite{MR3978827}, it is best to think of $\SH^3(G)$ instead as the \define{reduced} group cohomology of $G$ with coefficients in the 4-level spectrum $\mathrm{fGP}^\times_{\leq 4}$ mentioned in~\S\ref{subsec.spectrum}, since the basepoint $\pt \to \bB G$ gives a canonical isomorphism
$$ (\mathrm{fGP}^\times_{\leq 4})^3(\bB G) = \SH^3(\bB G) \oplus \SH^3(\pt),$$
and $\SH^3(\pt) \cong \bZ$ indexes the gravitational anomaly $n = 2(c_L - c_R)$.
This is consistent with the general story of twistings of cohomology theories: $\mathrm{fGP}^\times_{\leq 4}$ controls all the anomalies, both 't~Hooft and gravitational, for the spectrum $\SQFT^\bullet$, and so algebrotopologists sometimes write the twisted cohomology groups as $\SQFT^{\bullet+\omega}_G(-)$, with $\bullet+\omega$ a total class in $(\mathrm{fGP}^\times_{\leq 4})^3(\bB G)$.

To build $\SQFT^\bullet_{G,\omega}$ completely correctly, one should rigidify the 't~Hooft anomaly by choosing some representative SQFT $\cV_\omega$ with anomaly $\omega$ and then asking that points in $\SQFT^\bullet_{G,\omega}$ have 't~Hooft anomaly identified with the anomaly of $\cV_\omega$. If one just asks that the anomaly of an SQFT $\cF$ be equal to $\omega$ in cohomology, then there is an ambiguity in this identification (parameterized by the reduced cohomology group $\widetilde{\SH}{}^2(\bB G)$), analogous to the ambiguity in promoting an anomalous SQFT with $c_L = c_R$ to an absolute SQFT. 
Given that we modelled $\SQFT^\bullet$ as an orthogonal spectrum, identifying gravitational anomalies with anomalies of $\Fer(V)$ for real vector spaces $V$, one might try now to set $\cV_\omega \overset?= \Fer(V)$ for some real representation $V : G \to \rO(n)$. The anomaly of $G$ acting on $\Fer(V)$ is a characteristic class $\frac{p_1}2(V) \in \SH^3(\bB G)$ called the \define{fractional Pontryagin class}. (It is in the image of $\H^4(G;\bZ)$ whenever the representation $V$ is Spin.)

 Thus one can ask: How many of the classes in $\SH^3(\bB G)$ arises as fractional Pontryagin classes of real representations? This is a supercohomological version of the classical question of understanding which classes in $\H^{\mathrm{ev}}(G;\bZ)$ arise as Chern classes of complex representations. The answer is: it depends on the group $G$. 
 To illustrate this, consider the case when $G$ is a Schur cover of a sporadic group. The calculations in~\cite{JFT,MR3990846,MR3916985} show that $\SH^3(\bB G) = \H^4(G;\bZ)$ 
vanishes if $G$ is one of
$$\{\rM_{23}, 3\mathrm{McL}, \rJ_4, \mathrm{Ly}\}$$
and does not vanish but
consists entirely of fractional Pontryagin classes for $G$ in
$$ \{\rM_{11}, 2\rM_{12}, 6\rM_{22}, \rM_{24}, 2\rJ_2, \mathrm{Co}_3, \mathrm{Co}_2, 2\mathrm{Co}_1, 6\mathrm{Suz}, 3\rJ_3, \mathrm{He}\}.$$
On the other hand, for the groups $G$ in
$$ \{2\mathrm{HS}, \mathrm{Mon}\},$$
it is shown in those papers that
$\SH^3(\bB G) = \H^4(G;\bZ)$ is  \emph{not} generated by fractional Pontryagin classes. The calculations for the other sporadic groups have not been completed.

This might lead the reader to worry that perhaps there is no good representative $\cV_\omega$ in general. Fortunately, the main result of~\cite{EvansGannon} is that there is, for any finite group $G$ and anomaly $\omega \in \H^4(G;\bZ)$, a bosonic holomorphic conformal field theory $\cV_\omega$ with $G$-flavour symmetry and 't~Hooft anomaly~$\omega$.
The CFT $\cV_\omega$ is not canonical, and is of very high central charge. 
Although~\cite{EvansGannon} focuses on bosonic CFTs, the construction extends to the fermionic case for any $\omega \in \SH^3(\bB G)$. Any holomorphic conformal field theory can be thought of as an $\cN=(0,1)$ superconformal field theory by simply declaring the supersymmetry operator to be trivial. Thus we can construct representatives $\cV_\omega$ as required.

The two examples from the end of \S\ref{subsect.holomorphicanomaly}, $\overline{\Fer(3)}{}^{\oplus 24}$ and $\overline{V^{f\natural}} \otimes \overline{\Fer(3)}$, each carry natural $\rM_{24}$-actions. The action on the former permutes the $24$ summands, and so (as in the introduction) we will call it $\Perm \otimes \overline{\Fer(3)}$; we will write ``$\Perm$'' for both the standard degree-24 permutation representation of $\rM_{24}$ as well as its enhancement to a TQFT with $24$ massive vacua permuted by $\rM_{24}$.
 The action on the latter is the restriction of the action by $\Co_1 = \Aut(\overline{V^{f\natural}})$.
Thus we find classes $[\Perm \otimes \overline{\Fer(3)}] \in \SQFT^{-3}_{\omega}(\bB \rM_{24})$ and $[\overline{V^{f\natural}} \otimes \overline{\Fer(3)}] \in \SQFT^{-27}_{\omega'}(\bB \rM_{24})$, where $\omega,\omega'$ are the 't~Hooft anomalies of the various actions. In both cases the action of $\rM_{24}$ on $\overline{\Fer(3)}$ is trivial, and so these classes are the products of classes $[\Perm] \in \SQFT^{0}_{\omega}(\bB \rM_{24})$ and $[\overline{V^{f\natural}}] \in \SQFT^{-24}_{\omega'}(\bB \rM_{24})$ by the class $[\overline{\Fer(3)}] \in \SQFT^{-3}(\pt)$. (The ring $\SQFT^{\bullet}(\pt)$ acts on all twisted equivariant cohomologies $\SQFT^\bullet_{\omega}(X)$.)
One of the main results of~\cite{WittenTMF} is that there is a well-defined class in $\SQFT^{-n}(\pt)$ for each cobordism class of $n$-dimensional manifolds with String structure (and in particular for each class of $n$-dimensional framed manifolds), and the 3-sphere $S^3 = \mathrm{SU}(2)$ with its Lie group framing determines the class $[\overline{\Fer(3)}]$. In algebraic topology, generalized cohomology classes determined by $S^3$-with-its-Lie-group-framing are conventionally named ``$\nu$.'' Following this convention, we can write our SCFTs as
$$[\Perm] \nu \in \SQFT^{-3}_\omega(\bB \rM_{24}), \qquad [\overline{V^{f\natural}}] \nu \in \SQFT^{-27}_{\omega'}(\bB \rM_{24}).$$

What are the 't~Hooft anomalies $\omega,\omega'$? The answer in the former case is complicated, and so we will do it second. For $[\overline{V^{f\natural}}] \nu$, the anomaly of $\rM_{24}$ is restricted from the anomaly of the $\Co_1$-action on $\overline{V^{f\natural}}$. The main result of~\cite{JFT} implies that $\SH^3(\Co_1) \cong \bZ_{24}$ is cyclic, generated by the {fractional Pontryagin class} $\frac{p_1}2$ of the $24$-dimensional projective representation of $\Co_1$. (The paper~\cite{JFT} calculates instead the ordinary cohomology of the Schur cover $\Co_0 = 2.\Co_1$, but it is not hard to show that the canonical maps $\SH^3(\Co_1) \to \SH^3(\Co_0)$ and $\H^4(\Co_0; \bZ) \to \SH^3(\Co_0)$ are both isomorphisms.) Up to a sign convention in the definition of ``'t~Hooft anomaly,'' $\frac{p_1}2$ is precisely the anomaly of $\Co_1$ acting on $V^{f\natural}$~\cite{MR3916985}. Furthermore, Theorem~8.1 of \cite{JFT} asserts that, upon restriction to $\rM_{24} \subset \Co_1$, this anomaly restricts to $-\alpha$, where $\alpha$ is the anomaly of Mathieu Moonshine computed in~\cite{MR3108775}.
Actually, since different authors might reasonably disagree on the sign of ``the anomaly,'' it is useful that~\cite{MR3832169} has compared the multipliers for some elements acting in Mathieu Moonshine versus in $V^{f\natural}$ (see Table 3 therein). Multipliers depend linearly on the anomaly, and in all cases checked the anomaly for $V^{f\natural}$ restricts to minus the anomaly from~\cite{MR3108775}. Together with the computer calculation $\H^4(\rM_{24};\bZ) = \bZ_{12}$ from~\cite{SE09} (confirmed using elementary methods in Theorem 5.1 of~\cite{JFT}), and further calculations from~\cite{MR3108775}, these comparisons are enough to establish that 
$$ (\text{anomaly of }V^{f\natural})|_{\rM_{24}} = -(\text{anomaly of Mathieu Moonshine}). $$
But the anomalies of $V^{f\natural}$ and $\overline{V^{f\natural}}$ have opposite signs. Writing $\alpha$ for the Mathieu Moonshine anomaly from~\cite{MR3108775}, we find:
$$ [\overline{V^{f\natural}}] \nu \in \SQFT^{-27}_{\alpha}(\bB \rM_{24}).$$

Turning to $[\Perm] \nu \in \SQFT^{-3}_\omega(\bB \rM_{24})$, we must answer the question: What is the 't~Hooft anomaly of the $\rM_{24}$-action on the TQFT $\Perm$? The na\"ive answer, ``zero,'' misses an important subtlety, which is that the question is badly posed: 't~Hooft says that there is an \define{anomaly} when a partition function or other datum, which was expected to be $G$-invariant, in fact changes by a phase; but in our case those data are often zero because of the vacuum degeneracy. More precisely, the $\rM_{24}$-symmetry on $\Perm$ spontaneously breaks to a trivial $\rM_{23}$-symmetry. This trivial $\rM_{23}$-symmetry is definitely nonanomalous. But we may consider the total $\rM_{24}$-symmetry to have any anomaly that we choose in the kernel of $\SH^3(\rM_{24}) = \H^4(\rM_{24};\bZ) \to \SH^3(\rM_{23}) = \H^4(\rM_{23};\bZ)$. Remarkably, $\H^4(\rM_{23};\bZ) = 0$~\cite{MR1736514} (that paper in fact shows that $\H^\bullet(\rM_{23};\bZ) = 0$ for $\bullet \leq 5$, and provides further information about $\H^\bullet(\rM_{23};\bZ)$; the low-cohomology results are confirmed computationally in~\cite{SE09}, and the $\H^4$ calculation is confirmed with elementary methods in~\cite{JFT}). Thus we may consider the $\rM_{24}$-action on $[\Perm] \nu$ as having any anomaly that we want:
$$ [\Perm] \nu \in \SQFT^{-27}_{\omega}(\bB \rM_{24}) \text{ for any desired } \omega \in \SH^3(\rM_{24}) = \H^4(\rM_{24};\bZ) = \bZ_{12}.$$

The same argument can be rephrased algebrotopologically in terms of \define{pushforwards}. 
If $f : H \to G$ is a homomorphism of finite groups, then an SQFT with $G$-symmetry and 't~Hooft anomaly $\omega \in \SH^3(\bB G)$ determines, by forgetting some information, an SQFT with $H$-symmetry and 't~Hooft anomaly $f^*\omega \in \SH^3(\bB H)$. This provides a \define{pullback} map
$$ f^* : \SQFT^\bullet_{G,\omega} \to \SQFT^\bullet_{H,f^*\omega}.$$
This map has an ``adjoint'' $f_* : \SQFT^\bullet_{H,f^*\omega} \to \SQFT^\bullet_{G,\omega}$. 
To construct it, note that any map $f : H \to G$ factors canonically as a surjection followed by an injection:
$$ H \epi \im(f) \mono G.$$
Thus it suffices to describe $f_*$ when $f : H \to G$ is either surjective or injective.

Suppose first that $f : H \to G$ is a surjection with kernel $K = \ker(f)$. Then $f^*\omega \in \SH^3(\bB H)$ restricts trivially to $K$, and so if $\cF$ is an SQFT with $H$-symmetry, the $K$-action is nonanomalous and may be gauged. Furthermore, because the anomaly $f^*\omega$ of the $H$-action is pulled back from $G$, there is no ``mixed anomaly.'' It follows that the gauged theory $\cF \sslash K$ carries a $G$-action with anomaly $\omega$. 
The pushforward map $f_*$ is
$$ f_*:\cF \mapsto \cF \sslash K.$$
Note the repeated use of the fact that the anomaly is pulled back from $G$. If all we knew was that $H$ acted on $\cF$ with some anomaly $\omega' \in \SH^3(H)$, and that $\omega'|_K = 0$, then there would be an ambiguity in the meaning of the gauged theory: there would be $\widetilde{\SH}{}^2(K)$-many theories that deserve the name ``$\cF \sslash K$,'' parameterized by the $\widetilde{\SH}{}^2(K)$-many trivializations of $\omega'|_K$. (Here and throughout, $\widetilde{\SH}{}^\bullet$ denotes reduced supercohomology.) 
In our case,
we can choose a canonical gauging because, since $f^*\omega$ is restricted from $G$, it trivializes canonically on $K$.
Gauging  uses up the $K$-symmetry, but produces a new ``magnetic dual'' action by $\widetilde{\SH}{}^1(K) \cong \hom(K,\rU(1))$, and in general the remaining $G$-action could be extended by this symmetry (c.f.~\cite{BT2017} or \S2.3 of~\cite{MR3916985}). 
In our case 
the extension is trivial because $f^*\omega$ is pulled back from $G$. If the extension were trivializable but not canonically so, then the different trivializations might lead to different $G$-actions with different anomalies. But, again because $f^*\omega$ is pulled back from $G$, the extension is canonically trivializable, and the resulting $G$-action has anomaly $\omega$.

Suppose now that $f : H \to G$ is an injection, and let $X = G/H$ denote the space of cosets. If $\cF$ is an SQFT with $H$-symmetry and anomaly $f^*\omega$, then the direct sum (aka disjoint union) of $X$-many copies of $\cF$ can be given a $G$-action that permutes the copies, and that acts as $H$ on each copy. Physically, this is an SQFT where the $G$-symmetry spontaneously breaks to an $H$-symmetry. This is the pushforward map.

In both cases, the pushforward $f_*$ can be described as a type of finite path integral. Indeed, gauging a $K$-symmetry is the same as integrating over $K$-gauge fields, which are maps from the worldsheet to $\bB K$, which is the fibre of $\bB H \to \bB G$ in the case when $H \to G$ is an injection.  
When $H \to G$ is a surjection, then the fibre of $\bB H \to \bB G$ is the set $X$, and again we are taking an integral over maps to this fibre. 
This explains the general structure: $f_*$ implements a finite path integral over the space of maps from the worldsheet to the fibre $X$ of the map $\bB H \to \bB G$.

As an example, suppose that $f : H \mono G$ is an inclusion, and $\omega \in \SH^3(G)$ is an anomaly such that $f^*\omega = 0 \in \SH^3(H)$. Then we have a pushforward map
$$ f_* : \SQFT^\bullet(\bB H) \to \SQFT^\bullet_\omega(\bB G).$$
The domain is a commutative ring (the codomain is not, if $\omega \neq 0$), with unit $1 \in \SQFT^\bullet(\bB H)$ represented by the trivial ``vacuum'' SQFT with trivial $H$-symmetry. The pushforward $f_*(1)$ is simply the $(1{+}1)$-dimensional TQFT with $X = G/H$ many ground states, permuted by the $G$-symmetry, and no other structure. (In terms of functorial field theories valued in the 2-category of algebras and bimodules, $f_*(1)$ corresponds to the algebra $\bigoplus_X \bC$.) For $G = \rM_{24}$ and $H = \rM_{23}$, this is the TQFT that we called ``$\Perm$'' above.
If instead we had chosen some $\cF \in \SQFT^\bullet(\pt)$, equipped with the trivial $H$-action (equivalently, pulled back along $\bB H \to \pt$), then $f_*(\cF) = f_*(1) \otimes \cF = \Perm \otimes \cF$.

For the purposes of explaining Mathieu Moonshine, this looks pretty good. When restricted along $\pt \subset \rM_{24}$, the SQFT $\Perm \otimes \overline{\Fer(3)}$ restricts to $\overline{\Fer(3)}{}^{\oplus 24}$, which we already saw is nullhomotopic and produces the mock modular form $H(\tau)$. If $\Perm \otimes \overline{\Fer(3)}$ were $\rM_{24}$-equivariantly nullhomotopic, then we would produce mock modular forms $H_{g,h}(\tau)$ as desired. Unfortunately, it is not:

\begin{proposition}\label{prop.permDNW}
  $\Perm \otimes \overline{\Fer(3)}$ is not $\rM_{24}$-equivariantly nullhomotopic, for any 't~Hooft anomaly $\omega$.
\end{proposition}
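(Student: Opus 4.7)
My strategy is a pushforward-and-restrict argument. Because $\H^4(\rM_{23};\bZ)=0$, the inclusion $f\colon\rM_{23}\hookrightarrow\rM_{24}$ of the stabilizer of a point satisfies $f^*\omega=0$ for every $\omega\in\H^4(\rM_{24};\bZ)$; hence the pushforward $f_*$ is defined into $\SQFT^{-3}_\omega(\bB\rM_{24})$ for every value of $\omega$. Since $[\Perm]=f_*(1)$, the projection formula rephrases the claim as $f_*(\nu)\neq 0$ for every $\omega$.

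To detect nonvanishing I further restrict along a cyclic order-$3$ subgroup $C=\langle g\rangle\subset\rM_{24}$. The double-coset formula (equivalently, the $C$-orbit decomposition of the 24-element set $\rM_{24}/\rM_{23}$) yields
\[
  \operatorname{res}^{\rM_{24}}_{C}\,f_*(\nu) \;=\; a\,\pi^*\nu \;+\; b\,\tr^{C}_{\{e\}}(\nu) \;\in\; \SQFT^{-3}_{\omega|_{C}}(\bB C),
\]
where $(a,b)$ counts fixed points and free orbits of $g$ on the 24 points: $(a,b)=(3,7)$ for class 3A (cycle type $1^3 3^7$) and $(a,b)=(0,8)$ for class 3B (cycle type $3^8$). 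After inverting $2$, $\nu$ has order $3$ in $\SQFT^{-3}(\pt)[\tfrac12]\cong\bZ_3$, so the first summand vanishes, and the whole expression reduces mod $3$ to a nonzero multiple of $\tr^{C}_{\{e\}}(\nu)$.

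The proposition therefore follows once $\tr^{C}_{\{e\}}(\nu)\neq 0$ is verified in $\SQFT^{-3}_{\omega|_{C}}(\bB C)[\tfrac12]$ for every restricted anomaly $\omega|_{C}\in\H^4(C;\bZ)=\bZ_3$. Granting Conjecture~\ref{conjecture.ST}, this is a short Atiyah--Hirzebruch computation: the $E_2$-page of $\tmf^{-3}_\omega(B\bZ_3)[\tfrac12]$ is concentrated in only a handful of bidegrees, and one checks that the transfer class lies in a filtration no differential $d_r$ can eliminate. Switching between the 3A and 3B conjugacy classes covers every possible restricted anomaly, since the two restriction maps $\H^4(\rM_{24};\bZ)\to\H^4(C;\bZ)$ have different images in general.

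The main obstacle is the twisted case of the last step: verifying that the Bockstein-type differentials induced by a nonzero $\omega|_{C}$ do not kill $\tr^{C}_{\{e\}}(\nu)$. This is a miniature instance of the odd-primary Steenrod-operation analysis carried out on $B\rM_{24}$ in Sections~\ref{sec.HM24}--\ref{sec.AHSS}, but is much lighter because $B\bZ_3$ has so little cohomology.
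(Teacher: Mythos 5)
Your approach is genuinely different from the paper's. The paper restricts to $\rM_{23}$ and then \emph{gauges} (pushes forward along $\rM_{23}\to\pt$), reducing everything to the TQFT computation $p_*f^*f_*(1)=17+12=29$, whence $29\nu\neq 0$ because $29\not\equiv 0\pmod{3}$. Your plan is instead to restrict to a cyclic order-3 subgroup and detect the transfer class there. This could work, but there are several gaps.

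First, a factual error: the $3\rA$ cycle structure in $\mathbf{24}$ is $1^6 3^6$, not $1^3 3^7$ (see the table in the paper's proof of the twisted-shadows theorem). With the correct count, for $3\rA$ one gets $(a,b)=(6,6)$, and since \emph{both} $6\equiv 0\pmod 3$, the whole restriction $\operatorname{res}^{\rM_{24}}_{\langle 3\rA\rangle}f_*(\nu)$ vanishes identically. So $3\rA$ detects nothing, and the remark about ``switching between the 3A and 3B conjugacy classes'' does not buy you anything: only $3\rB$ (with $(a,b)=(0,8)$) gives a potentially nonzero answer $-\tr^{C}_{\{e\}}(\nu)$.

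Second, the actual crux --- that $\tr^{C}_{\{e\}}(\nu)\neq 0$ in $\tmf^{-3}_{\omega|_C}(B\bZ_3)[\tfrac12]$ --- is left unverified, and it is not the short check you suggest. The transfer class is \emph{not} visible on the $E_2$-page of the AHSS: its image under restriction to a point is $3\nu=0$, so it lives in a priori unknown higher filtration, and one cannot ``check that it lies in a filtration no differential can eliminate'' without first locating it. In the \emph{untwisted} case one can sidestep this by gauging, exactly as the paper does: $\omega|_C=0$, so $p_*$ is defined and $p_*\tr^C_{\{e\}}(\nu)=\nu\neq 0$. But for $\omega\neq 0$ the restriction $\omega|_{\langle 3\rB\rangle}$ is \emph{nonzero} (Proposition~\ref{prop.restrictions} gives $r|_{\bZ_{3\rA}\times\bZ_{3\rB}}=z^2$, so $r|_{\bZ_{3\rB}}\neq 0$), gauging is unavailable, and you are stuck with a genuine twisted-AHSS computation. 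This is not lighter than the paper's analysis; it is precisely a miniature version of it, and its nontriviality is why the paper chose $\rM_{23}$ instead: $\rM_{23}$ has vanishing $\H^4$ for \emph{all} $\omega$, so one can always gauge, and its orbit decomposition of $\mathbf{24}$ gives $17+12=29\not\equiv 0\pmod 3$ rather than a multiple of~$3$.

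Finally, note a philosophical difference: the paper's proof is a pure TQFT/field-theory argument that nowhere invokes Conjecture~\ref{conjecture.ST}, whereas your proof explicitly ``grants Conjecture~\ref{conjecture.ST}'' to pass from $\SQFT^\bullet$ to $\tmf^\bullet$ and run an AHSS. For the untwisted $\omega=0$ case your approach (completed by gauging at $\bZ_{3\rB}$ to get $8\nu\neq 0$) is a correct and arguably simpler variant of the paper's argument, but it does not cover $\omega\neq 0$ without the missing twisted calculation.
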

\begin{proof}
  If $[\Perm \otimes \overline{\Fer(3)}] = [\Perm] \otimes \nu$ were trivial in $\SQFT^{-3}_\omega(\bB \rM_{24})$, then its restriction to $\rM_{23}$ would also be trivial. Since $\SH^3(\rM_{23}) = 0$, this restriction has trivial gauge anomaly, and so we may gauge the $\rM_{23}$-action. If $\Perm \otimes \overline{\Fer(3)}$ were equivariantly nullhomotopic, then so would be this gauged theory (by gauging the $\rM_{23}$-action on the nullhomotopy).
   In algebrotopological language, writing $f : \rM_{23} \mono \rM_{24}$ and $p : \rM_{23} \to \pt$, we wish to compute $p_* f^* f_* 1 \otimes \nu$.
   
   This is purely a TQFT computation. 
   Our goal is to compute the $(1{+}1)$-dimensional TQFT which counts maps from the worldsheet to the quotient stack
   $$ \mathbf{24} \sslash \rM_{23}.$$
   But, restricted to $\rM_{23}$, $\mathbf{24}$ splits as $\mathbf{1} \sqcup \mathbf{23}$, and so
   $$ \mathbf{24} \sslash \rM_{23} = \bB \rM_{23} \sqcup \mathbf{23} \sslash \rM_{23} = \bB \rM_{23} \sqcup \bB \rM_{22}.$$
   In other words, the TQFT $p_* f^* f_* p^* 1$ is the direct sum of two TQFTs: pure gauge theory for $\rM_{23}$, and pure gauge theory for $\rM_{22}$.
   
   For any finite group $G$, pure $G$-gauge theory in $(1{+}1)$-dimensions is described by the group algebra $\bC[G]$ of $G$, which is Morita equivalent to the direct sum of $\#(G/G)$-many copies of $\bC$, where $\#(G/G)$ means the number of conjugacy classes in $G$. The group $\rM_{23}$ has $17$ conjugacy classes, and the group $\rM_{22}$ has $12$ conjugacy classes. Thus
   $$ p_* f^* f_* p^* 1 = 17 + 12 = 29,$$
   and so $p_* f^* f_* p^* 1 \otimes \nu = 29\nu$, represented by the SQFT $\overline{\Fer(3)}{}^{\oplus 29}$. But $29$ is not divisible by $24$, and so $\overline{\Fer(3)}{}^{\oplus 29}$ is not nullhomotopic by~\cite{GJFmock}.
   
   One could wonder if perhaps the day would be saved by somehow squeezing in some discrete torsion, i.e.\ nontrivial Dijkgraaf--Witten action, into the $\rM_{22}$ gauge theory, since $\widetilde{\SH}{}^2(\rM_{22}) = \rH^3(\rM_{22};\bZ) = \bZ_6$ is nontrivial. This effects a change from the group algebra $\bC[\rM_{22}]$ to a twisted group algebra. The twisted group algebras of $\rM_{22}$ are Morita equivalent to a sum of 10 or 11 copies of $\bC$, depending on the twisting, and neither $17 + 10$ nor $17+11$ is divisible by $24$.
\end{proof}

\subsection{Twisted and twined shadows}

Proposition~\ref{prop.permDNW} means that we will not be able to answer Question~\ref{question.modularity} by working just with the permutation representation of $\rM_{24}$. There is another reason to reject it as an answer. Suppose, contradicting Proposition~\ref{prop.permDNW}, that $\Perm \otimes \overline{\Fer(3)}$ were $\rM_{24}$-equivariantly nullcobordant, and choose a nullcobordism $\cF$. For each commuting pair $g,h \in \rM_{24}$, we may twist and twine $\cF$, thereby producing a partition function $Z_{RR}(\cF)_{g,h}(\tau,\bar\tau)$. Following the logic of Conjecture~\ref{conj.gjfA}, the holomorphic part of $Z_{RR}(\cF)_{g,h}(\tau,\bar\tau)$, normalized with a factor of $\eta(\tau)$, will be a mock modular form (for some subgroup of $\SL_2(\bZ)$ depending on $g,h$) whose shadow is (the complex conjugate of) the torus one-point function of $\bar Q$ in $\Perm \otimes \overline{\Fer(3)}$, twisted and twined by $g$ and $h$.

Since $g$ and $h$ do not act on $\overline{\Fer(3)}$, this shadow factors as
$$ \text{putative shadow} = Z_{RR}(\Perm)_{g,h} \, \eta(\tau)^3.$$
The computation of $Z_{RR}(\Perm)_{g,h}$ is very easy, because $\Perm$ itself is very easy, being simply the TQFT of maps from the worldsheet to the standard permutation representation $\mathbf{24}$ of $\rM_{24}$. To have a map to this set from a torus twisted and twined by $g$ and $h$, the value of the map must be fixed by both $g$ and $h$, and we discover:
$$ Z(\Perm)_{g,h} \propto \text{number of common $g,h$ fixed points in }\mathbf{24}.$$
If we were treating $\Perm$ as a nonanomalous $\rM_{24}$-equivariant TQFT, then the two sides would be equal. We have written only that they are proportional because of the possibility of a nontrivial anomaly $\omega$. Indeed, the presence of $\omega$ means that the twisted-twined partition ``function'' is not really a function at all, but rather a section of a flat line bundle on the space of spin tori with $G$-bundles. Under modifying a $3$-cocycle representative of $\omega$ by $\d \xi$, for some $2$-cochain $\xi$ on $G$, the ``function'' $Z(\Perm)_{g,h}$ changes by a factor of $\frac{\xi(g,h)}{\xi(h,g)}$.

When $g = e$ is the identity, $Z_{RR}(\Perm)_{e,h}$ is simply the trace of the $h$-action on $\mathbf{24}$, which agrees with the shadows in Mathieu Moonshine (compare~\cite{MR3271175}). More generally, if the subgroup of $\rM_{24}$ generated by $g$ and $h$ is cyclic (for example, if $g$ and $h$ have coprime order), then $Z(\Perm)_{g,h} = Z(\Perm)_{1,x} = \tr_{\mathbf{24}}(x)$, where $x$ is any generator of the cyclic group. This is again consistent.

However, if the subgroup generated by $g$ and $h$ is not cyclic, then this putative shadow is not the shadow of the mock modular form $H_{g,h}(\tau)$ from (generalized) Mathieu Moonshine. Indeed,~\cite{MR3108775} finds that $H_{g,h}(\tau)$ has trivial shadow (i.e.\ it is holomorphic modular) as soon as $g$ and $h$ do not generate a cyclic group, and for most such pairs $H_{g,h}$ simply vanishes.
But there are many rank-2 subgroups of $\rM_{24}$ which do have fixed points. A list of all conjugacy classes of rank-2 abelian subgroups of $\rM_{24}$ is available in Table~1 of~\cite{MR3108775}. The first entry on that list, for example, is a Klein-4 subgroup $\bZ_2^2$ which acts with $8$ fixed points in $\mathbf{24}$.

Instead, as in Conjecture~\ref{conjecture.main}, we conjecture that $\overline{V^{f\natural}} \otimes \overline{\Fer(3)}$ is nullhomotopic. If it is, then the twisted and twined partition functions of its nullhomotopy would have, as their shadows, the functions $Z_{RR}(\overline{V^{f\natural}})_{g,h} \, \eta(\tau)^3$. The antiholomorphicity of $\overline{V^{f\natural}}$ means that $Z_{RR}(\overline{V^{f\natural}})_{g,h}$ is just an integer: the signed trace of $h$ acting on the ground states of the $g$-twisted Ramond sector of $V^{f\natural}$. When $g = 1$, these ground states form the Leech lattice representation $\mathrm{Leech} \otimes \bR$ of $\Co_0 = 2.\Co_1$. (The double cover comes from the ``Gu--Wen layer'' of the anomaly of the $\Co_1$-action on $\overline{V^{f\natural}}$.) This representation restricts over $\rM_{24}$ to the permutation representation, and so $Z_{RR}(\overline{V^{f\natural}})_{1,h} = \tr_{\mathbf{24}}(h) = Z_{RR}(\Perm)_{1,h}$, which is the desired value. More generally, if $g,h$ generate a cyclic group, with cyclic generator $x$, then $Z_{RR}(\overline{V^{f\natural}})_{g,h} = Z_{RR}(\overline{V^{f\natural}})_{1,x} = Z_{RR}(\Perm)_{1,x} = Z_{RR}(\Perm)_{g,h}$, simply because these two integers are related by a modular transformation. On the other hand, when $g,h$ generate a rank-2 group, then $Z_{RR}(\overline{V^{f\natural}})_{g,h}$ and $Z_{RR}(\Perm)_{g,h}$ may not agree. In fact:

\begin{theorem}
  If $g,h \in \rM_{24}$ generate a rank-2 abelian group, then $Z_{RR}(\overline{V^{f\natural}})_{g,h} = 0$. Thus Conjecture~\ref{conjecture.main} is consistent with the shadows found by~\cite{MR3108775}.
\end{theorem}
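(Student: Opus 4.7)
Because $\overline{V^{f\natural}}$ is purely antiholomorphic with $\cN=(0,1)$ supersymmetry $\bar{Q}$, the standard supersymmetric argument applied sector-by-sector shows that the twisted--twined partition function $Z_{RR}(g,h)(\tau,\bar\tau)$ is independent of $\tau$, equaling the signed $h$-trace on the $g$-twisted Ramond ground states:
$$Z_{RR}(\overline{V^{f\natural}})_{g,h} \;=\; \tr_{\mathrm{gs}(\cH^R_g)}\bigl((-1)^F h\bigr).$$
So the claim reduces to a representation-theoretic question: for a rank-$2$ pair $g,h \in \rM_{24}$, this signed trace is zero. (The consistency statement is then immediate: the conjectured shadow becomes $Z_{RR}(g,h)\,\eta(\tau)^3 = 0$, matching the fact recorded in~\cite{MR3108775} that $H_{g,h}$ is holomorphic modular for such pairs.)

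The plan is to identify the $g$-twisted Ramond ground states explicitly using Duncan's construction of $V^{f\natural}$~\cite{MR2352133} as a $\bZ_2$-orbifold of $\Fer(24)$. For each $g \in \rM_{24}$ of cycle shape $\pi_g = \prod_k k^{a_k}$ on $\mathbf{24}$, I would first compute the $g$-twisted Ramond vacuum conformal weight from $\pi_g$ via the standard free-fermion formula; if this weight exceeds $c/24 = \tfrac12$ the ground-state space is already zero. Otherwise I would identify the ground-state module as a Clifford module for the zero modes associated to the $g$-fixed sublattice of the Leech lattice, together with the $(-1)^F$-projection inherited from the orbifold and the contribution of the $(\bZ_2 g)$-twisted sector. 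Then I would determine the action of the centralizer $C_{\rM_{24}}(g)$ and read off the $h$-trace. The cases are finite: the conjugacy classes of rank-$2$ abelian subgroups of $\rM_{24}$ are enumerated in Table~1 of~\cite{MR3108775}, and $\SL_2(\bZ)$-covariance reduces the verification to one convenient generating pair per class.

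The main obstacle is the simultaneous bookkeeping of (i)~the $\bZ_2$-orbifold, which mixes the NS/R and untwisted/twisted sectors of $\Fer(24)$, and (ii)~the double cover $\Co_0 \to \Co_1$, under which $\rM_{24}$ acts only projectively on Ramond ground states. For a rank-$2$ pair one must pin down compatible lifts of both $g$ and $h$ to $\Co_0$ before the signed trace is even well-defined (otherwise it is well-defined only up to sign). Once these lifts are fixed, the expected vanishing mechanism in each case is either that the twisted vacuum weight is too large (so no ground states exist), or that $h$ permutes Clifford spinors in the surviving module without diagonal contribution; the cycle-shape and fixed-point data in Table~1 of~\cite{MR3108775} supply exactly what is needed to verify this class by class.
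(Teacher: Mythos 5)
Your plan reduces the statement to a representation-theoretic computation, which is the right idea and is also what the paper does, but the route you sketch has two real problems: a missing simplification and an incorrect guess about the vanishing mechanism.

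First, you omit the anomaly/multiplier argument that does most of the work in the paper's proof: $Z_{RR}(\overline{V^{f\natural}})_{g,h}$, being $\tau$-independent, transforms as a modular function of weight zero with multiplier controlled by the restriction of $\omega = \alpha$ to $\langle g,h\rangle$. As soon as that anomaly restricts nontrivially to any cyclic subgroup of $\langle g,h\rangle$, the multiplier is a nontrivial root of unity and a $\tau$-independent modular function must vanish. This kills almost all entries in Table~1 of~\cite{MR3108775} at a stroke and reduces the remaining check to the handful of rank-$2$ subgroups built only from the nonanomalous classes $1,2\rA,3\rA,4\rB,6\rA$. Without this step your ``finite case check'' is much larger and you would spend effort on cases that vanish automatically.

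Second, your expected vanishing mechanism (vacuum weight too large, or $h$ permuting Clifford spinors with no diagonal contribution) does not match what actually happens. In the surviving cases the paper shows the $g$-twisted Ramond ground state space is \emph{nonzero} and in fact carries a definite parity and dimension (8, 6, or 4), identified as a specific projective representation of the centralizer $C(g)$. The vanishing of the $h$-trace is a character-theoretic fact read off from the projective character tables in the appendix of~\cite{MR3108775}; it is not a dimension-zero statement and not straightforwardly a ``permutes spinors off-diagonally'' statement, because the relevant traces involve the projective phases from the $\Co_0 \to \Co_1$ cover. Your own flag about pinning down compatible lifts of $g$ and $h$ is exactly where this subtlety lives, but as stated you have no mechanism that would let you read off the trace without invoking the projective character tables.

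Third, a practical point about presentation: you build from Duncan's $\bZ_2$-orbifold of $\Fer(24)$, whereas the paper uses the lattice model of $V^{f\natural}$ as the SVOA of the $D_{12}^+$ lattice. The lattice model makes the $g$-twisted Ramond sector transparent: it is built from the translated coset $D_{12}^+ + R + g$, the ground states are lattice vectors of norm $1$, and the parity of a ground state is determined by whether that vector is integral or half-integral. This makes the ``only bosonic ground states survive'' observation a one-line lattice computation and sidesteps all the orbifold/GSO bookkeeping you worry about. Your approach is not wrong in principle, but it reintroduces exactly the complications the paper's choice of model avoids, and the bookkeeping you identify as ``the main obstacle'' is in fact substantial enough that your sketch would not currently close.
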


The calculations of \cite{MR3918493} suggest that there may be an elegant proof of this theorem, but the author did not find one. Rather, we will prove the theorem by computing all cases.

\begin{proof}
The integer $Z_{RR}(\overline{V^{f\natural}})_{g,h} = Z_{RR}(V^{f\natural})_{g,h}$ depends only on the conjugacy class of the rank-$2$ abelian group $\langle g,h\rangle$. It transforms with nontrivial multiplier under some congruence subgroup, and hence must be zero, as soon as the 't~Hooft anomaly restricts nontrivially to $\langle h\rangle$, or to any other generator of $\langle g,h\rangle$. This leaves only the groups where $\langle g,h\rangle$ consists of elements with $\rM_{24}$-conjugacy classes $1, 2\rA$, $3\rA$, $4\rB$, or $6\rA$. (The other nonanomalous elements in $\rM_{24}$ do not participate in rank-$2$ abelian groups.) In Table~1 of~\cite{MR3108775}, these are the entries numbered $1,2,3,17,18,19,25,26,28,29$.

In order to study these cases, we must understand the $g$-twisted Ramond-sectors of $V^{f\natural}$ for $g$ of $\rM_{24}$ conjugacy class $2\rA$, $3\rA$, $4\rB$, and $6\rA$. In fact, the only rank-2 subgroups in $\rM_{24}$ that include a $6\rA$ element are generated by a $6\rA$ element and a $2\rA$ element, and so, by switching $g$ and $h$, we do not need to consider the last case.

In order to study these twisted sectors, let us recall a bit about the holomorphic SVOA $V^{f\natural}$. It is a lattice SVOA for the \define{$D_{12}^+$ lattice}:
$$ D_{12}^+ = \left\{\lambda = (\lambda_1,\dots,\lambda_{12}) \in \bZ^{12} \sqcup {\textstyle \bigl(\bZ + \frac12\bigr){}^{12}} \text{ such that } \sum \lambda_i \in 2\bZ\right\}.$$
It has a canonical translation coset inside $\bR^{12}$:
$$ (D_{12}^+)_R = \left\{\lambda = (\lambda_1,\dots,\lambda_{12}) \in \bZ^{12} \sqcup {\textstyle \bigl(\bZ + \frac12\bigr){}^{12}} \text{ such that } \sum \lambda_i \in 2\bZ + 1\right\}.$$
%
 The Ramond sector $V^{f\natural}_R$ is built from $(D_{12}^+)_R$ in the same way that the Neveu--Schwarz sector is built from $D_{12}^+$. Namely, $V^{f\natural}_R$ is generated over the Heisenberg algebra $\operatorname{Bos}(12)$ by a state $\Gamma_\lambda$ for each $\lambda \in (D_{12}^+)_R$.
There is no canonical way to assign a fermion parity operator ``$(-1)^f$'' to the R-sector of a holomorphic SVOA, but the relative parity is well-defined, and we will arbitrarily declare the absolute parity by saying that $\Gamma_\lambda$ is bosonic (resp.\ fermionic) if $\lambda \in \bZ^{12}$ (resp.\ $(\bZ+\frac12)^{12}$).

The $\cN{=}0$ automorphism group (i.e.\ the automorphism group as an SVOA, ignoring the supersymmetry) of $V^{f\natural}$ is the Lie group $\SO^+(12)$, defined as the image of $\Spin(12)$ in the positive half-spin representation. Because of the 't~Hooft anomaly, this group acts only projectively on the Ramond sector: the group that acts linearly on $V^{f\natural}_R$ is $\Spin(12)$ itself. Since $\SO^+(12)$ is connected, any $g \in \SO^+(12)$ can be conjugated into the maximal torus $T \cong \bR^{12} / D_{12}^+ \subset \SO^+(12)$. We will abusively also call this torus element ``$g$,'' but we will write the group law in $T$ additively.
 Any $g \in T$ determines a translated lattice $D_{12}^+ + g \subset \bR^{12}$, from which the $g$-twisted sector $V^{f\natural}_g$ is built. A special case is when $g = R$ is the central element in $\SO^+(12)$, in which $D_{12}^+ + R = (D_{12}^+)_R$ is the canonical translated lattice, and the notation ``$V^{f\natural}_R$'' is consistent. As an element of $\bR^{12} / D_{12}^+$, $R$ can be represented by the vector $R = (1,0,\dots,0) \pmod{D_{12}^+}$. More generally, the $g$-twisted R-sector deserves the name ``$V^{f\natural}_{R+g}$.''
For the symmetries $g \in \rM_{24} \subset \SO^+(12)$ that we are interested in, the vectors are:
$$ \begin{array}{c|c|c}
 \rM_{24}\text{ name} & \text{cycle structure} & g \in T = \bR^{12} / D_{12}^+ \\ \hline
 2\rA & 1^8 2^8 & (\frac12, \frac12, \frac12, \frac12, 0,0,0,0,0,0,0,0) \\
 3\rA & 1^6 3^6 & (\frac13, \frac13, \frac13, \frac13, \frac13, \frac13, 0,0,0,0,0,0) \\
 4\rB & 1^4 2^2 4^4 & (\frac12, \frac12, \frac12,  \frac14, \frac14,  \frac14, \frac14, 0, 0,0,0,0)
\end{array}
$$

In any sector of any lattice SVOA, $L_0$ acts on the state $\Gamma_\lambda$ with eigenvalue $|\lambda|^2/2$. If $g$ preserves a supersymmetry operator $Q$, then in the $g$-twisted R-sector the Hamiltonian $L_0 - c/24$ is a square (of the zero mode of $Q$, up to a normalization convention) and so takes only nonnegative values. We are interested in the space of ground states, which are thus in bijection with those $\lambda \in V^{f\natural}_{R+g}$ with $|\lambda|^2 = 1$ (since $c=12$ for $V^{f\natural}$). Such a state contributes a bosonic or fermionic mode according to whether $\lambda - g$ is integral or half-integral.
Each of the vectors $g \in T$ listed above contains at least five $0$s. Thus a vector $\lambda \in g + (\bZ+\frac12)^{12}$ will have at least five entries with absolute value $\geq \frac12$, and so $|\lambda|^2 \geq 5 \frac14 > 1$. It follows that the $g$-twisted R-sector has only bosonic ground states. The number of ground states is then, by modularity, equal to the trace of $g$ acting on the ground states in $V^{f\natural}_R$, which is just the $24$-dimensional representation of $\rM_{24}$. This trace~$\tr_{\mathbf{24}}(g)$ is easily read from the above table: it is the exponent of $1$ in the cycle structure.
(A priori, there could be both bosonic and fermionic ground states in the $g$-twisted R-sector, and only their signed count is equal to~$\tr_{\mathbf{24}}(g)$.)

For any $g$ acting on any holomorphic conformal field theory $V$, the $g$-twisted sectors $V_g$ and $V_{R+g}$ carry projective actions of the centralizer $C(g)$ inside the automorphism group of $V$.
As we have remarked already, the anomaly for the $\rM_{24}$-action on $V^{f\natural}$ is (up to a sign) the same as the anomaly computed in~\cite{MR3108775}. This anomaly determines the projectivity of the action of $C(g)$ on the $g$-twisted sectors. (See~\cite{MR3108775} for a nice explanation.) The centralizers $C(g) \subset \rM_{24}$ of the elements $g$ listed above, and their projective character tables,  are listed in the appendix of~\cite{MR3108775}.

When $g = 2\rA$, we have $C(2\rA) = 2^4.(2^3{:}\rL_3(2))$ in the ATLAS notation. Its action on $V^{f\natural}_{R+2\rA}$ is genuinely projective. The eight ground states must compile into an $8$-dimensional module. This is the smallest dimension of any projective representation on $C(2\rA)$. It remains to identify the correct representation: they are listed under the names $\chi_1,\chi_2,\chi_3,\chi_4$ in the appendix of~\cite{MR3108775}. But look at the class called $4\rA_1$ therein. It has a nontrivial anomaly, and so its trace vanishes. Thus we find that the ground states of $V^{f\natural}_{R+2\rA}$ correspond to the character $\chi_1$. The only nonzero entries in the $\chi_1$ correspond to elements $h \in C(2\rA)$ such that $h$ and $g = 2\rA$ together generate a cyclic group. This establishes the Theorem for the groups numbered $1,2,3,17,18,19, 28,29$ in Table~1 of~\cite{MR3108775}.

When $g = 3\rA$, we have $C(3\rA) = 3A_6$, the exceptional Schur cover of the alternating group $A_6$. The ground states of $V^{f\natural}_{R+3\rA}$ form a six-dimensional linear representation $M$; because $\H^1(C(3\rA);\rU(1)) = 0$, there is a canonical trivialization of the projectivity, and so no phase ambiguities in the actions of elements of $C(3\rA)$ on $M$. The centralizer of $g$ inside the full automorphism group $\Aut_{N=1}(V^{f\natural}) = \Co_1$ is a group of shape $3^2.\rU_4(3).2$~\cite{ATLAS,MR716777}. This acts through a double cover $(3^2\times 2).\rU_4(3).2$ on $V^{f\natural}_{R+3\rA}$, and $6$ is the smallest dimension of any simple representation thereof. The central $g \in (3^2\times 2).\rU_4(3).2$ acts on all $6$-dimensional representations with trace $\pm 3 \pm 3\sqrt{-3}$. It follows that $M$ breaks up over $3A_6$ as a sum of the characters labeled $\chi_2,\chi_3,\chi_4,\chi_5,\chi_8,\chi_9$ in~\cite{MR3108775}. For all of these modules, the element labeled $3\rA_2$ acts with trace $0$. This establishes the Theorem for the group numbered $33$ in Table~1 of~\cite{MR3108775}.

Finally, we have the groups $\langle g,h\rangle$ numbered $25$ and $26$, for which $g$ and $h$ are both of class $4\rB$. According to~\cite{MR3108775}, $C(4\rB)$ is a group of shape $((4\times4){:}4){:}2$, acting genuinely projectively on the four ground states of $V^{f\natural}_{R+4\rB}$. For both groups (numbers $25$ and $26$), the centralizer of $\langle g,h\rangle$ has order $16$. It follows that $h$ is one of the conjugacy classes named ``$4\rB_3$,'' ``$4\rB_4$,'' ``$4\rB_5$,'' and ``$4\rB_7$'' in the appendix of~\cite{MR3108775}. But for $h = 4\rB_5$ or $4\rB_7$, the group $\langle g,h\rangle$ contains an element of class $4\rA$, which has an anomaly (they correspond to the groups numbered $23$ and $24$ in Table~1 of~\cite{MR3108775}). The classes $h = 4\rB_3$ and $4\rB_4$ act with trivial trace on all genuinely projective representations of $C(4\rB)$.
\end{proof}

The mock modular forms $H_{g,h}$ from~\cite{MR3108775} are \define{integral} in the sense that their coefficients, as functions of $h$,  are all virtual characters of projective representations of the centralizer of $g$ in $\rM_{24}$. (Indeed, they are mostly zero.)
 Thus the equivariant version of the invariant from~\cite{GJFmock} vanishes for $\overline{V^{f\natural}} \otimes \overline{\Fer(3)}$: that invariant does not obstruct Conjecture~\ref{conjecture.main}.

It was observed early in the development of Mathieu Moonshine \cite{MR2955931,MR3539377} that the characters that appear (i.e.\ the coefficients of the $q$-expansion of $H_{1,h}(\tau)$) are all restrictions of virtual characters of projective represenations of $\Co_1$. However, it is unlikely that the functions $H_{1,h}$, let alone $H_{g,h}$, have \emph{(mock) modular} integral extensions to $\Co_1$. Said another way, it is likely that the invariant from~\cite{GJFmock} is strong enough to prove:

\begin{conjecture}
  $\overline{V^{f\natural}} \otimes \overline{\Fer(3)}$ is not $\Co_1$-equivariantly nullhomotopic.
\end{conjecture}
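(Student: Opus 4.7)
The strategy is to reduce the conjecture to a nonexistence statement about equivariant mock modular forms, and then pin down an explicit low-order obstruction. Suppose for contradiction that $\overline{V^{f\natural}} \otimes \overline{\Fer(3)}$ admits a $\Co_1$-equivariant nullhomotopy $\cF$. Apply the $\Co_1$-twined version of Conjecture~\ref{conj.gjfA} and Conjecture~\ref{conj.gjfB}: for each $h \in \Co_1$ one obtains a partition function $Z_{RR}(\cF)_{1,h}(\tau,\bar\tau)$, whose holomorphic part
$$ \tilde H_h(\tau) = \eta(\tau)\lim_{\bar\tau \to -i\infty} Z_{RR}(\cF)_{1,h}(\tau,\bar\tau) $$
is a weight-$\tfrac12$ mock modular form (for the appropriate congruence subgroup and $\omega|_{\langle h\rangle}$-twisted multiplier system) with shadow proportional to $\tr_{\mathrm{Leech}}(h)\,\eta(\bar\tau)^3$, where $\mathrm{Leech}$ denotes the $24$-dimensional projective representation of $\Co_1$. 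By Conjecture~\ref{conj.gjfB} the Fourier coefficients of $\tilde H_h$ are values (up to an APS half-integer shift) of virtual characters of projective representations of $\Co_1$ on the $L_0$-eigenspaces of the SQM model $\cH(\cF)$.

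Next, pin down the $\rM_{24}$-restriction. Pulling $\cF$ back along $\rM_{24} \mono \Co_1$ produces a $\rM_{24}$-equivariant nullhomotopy of the same boundary theory, so the functions $\tilde H_h$ for $h \in \rM_{24}$ must agree with the Mathieu moonshine functions $H_{1,h}$ from~\cite{MR3108775}. In other words, a putative $\Co_1$-equivariant nullhomotopy would extend the $\rM_{24}$-indexed family $\{H_{1,h}\}_{h \in \rM_{24}}$ to a $\Co_1$-indexed family $\{\tilde H_h\}_{h \in \Co_1}$ of mock modular forms whose Fourier coefficients, at each order in $q$, are the character values of a single virtual projective representation of $\Co_1$.

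The core step is to exhibit a single class $h \in \Co_1 \setminus \rM_{24}$ for which no such $\tilde H_h$ can exist. Fixing the conjugacy class, both (i) the space of weight-$\tfrac12$ mock modular forms with the required shadow, level, and multiplier, and (ii) the set of virtual projective characters of $C_{\Co_1}(h)$ compatible with the $\rM_{24}$-values already determined by moonshine, are finite-dimensional affine lattices whose low-order coefficients can be enumerated explicitly from the character tables of $\Co_1$ recorded in~\cite{ATLAS} together with the projectivity data computed from the anomaly of~\cite{JFT}. The plan is to truncate at low order in $q$, write down the linear constraints imposed by modularity, the linear constraints imposed by $\Co_1$-integrality, and the constraints imposed by matching the $\rM_{24}$-restriction to $H_{1,h'}$ for $h'$ ranging over $\rM_{24}$-classes intersecting $\langle h \rangle$, and verify that the resulting affine system is inconsistent.

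\textbf{Main obstacle.} The hard part is not the modularity side nor the character theory side in isolation, but the interface: one needs simultaneously to control the dimension of the space of mock modular forms with the prescribed shadow and multiplier at sufficiently high level, and to compute the projective character tables of centralizers $C_{\Co_1}(h)$ for representative $h \in \Co_1 \setminus \rM_{24}$, and then to carry out the elimination at enough low-order coefficients to rule out all accidental solutions. Unlike the $\rM_{24}$ case, where the genus-zero principle of~\cite{MR3021323} pins $H_{1,h}$ down uniquely, no such rigidity is available for the conjectural $\Co_1$-extension, so one must either find a single class for which the inconsistency appears at very low order, or develop an analogue of the genus-zero statement for the $\Co_1$ family. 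I expect the former to be the practical route, most likely via a class $h$ whose $\rM_{24}$-restriction constraints force a coefficient that the $\Co_1$-side cannot accommodate; this is presumably why the paper states the result only as a conjecture.
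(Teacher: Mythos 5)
This is stated as a conjecture in the paper, and the paper does not prove it: the only supporting commentary is the heuristic that it is ``likely that the invariant from \cite{GJFmock} is strong enough'' to detect an obstruction, i.e.\ that one does not expect a coherent choice of $\Co_1$-virtual characters for the Ramond ground states whose $q$-series are all mock modular. Your plan is faithful to that heuristic. But, as you yourself acknowledge, you have not carried out the decisive step: no explicit class $h \in \Co_1 \setminus \rM_{24}$ is exhibited, the dimensions of the relevant spaces of mock modular forms with the prescribed shadow and multiplier are not bounded, and the low-order coefficient elimination is not performed. What remains is a reformulation of the conjecture as a finite but unchecked linear-algebra problem, not a proof---so the gap you flag at the end of your proposal is real, and it is the whole of the content.

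One technical slip worth flagging: since you fix $g = 1$ and vary only the twining element $h$, the Ramond-sector Hilbert space of the SQM model $\cH(\cF)$ is a module for the full group $\Co_1$ (more precisely its Schur cover $\Co_0$, owing to the Gu--Wen layer of the anomaly), so the Fourier coefficients of $\tilde H_h$ should be evaluations at $h$ of virtual characters of $\Co_1$ itself, not of the centralizer $C_{\Co_1}(h)$; the centralizer is what appears only when one also twists by $g \neq 1$. Relatedly, the known observation in the literature is that the individual coefficients already \emph{are} restrictions of $\Co_1$-virtual characters, so the constraint you should be probing is not pointwise integrality at one class but rather the cross-class coherence: whether a single sequence of $\Co_1$-characters $\chi_n$ can be chosen so that $\sum_n \chi_n(h)\, q^{n-1/8}$ is mock modular for \emph{every} $h \in \Co_1$ simultaneously.
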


\subsection{$\TMF^\bullet$ and $\tmf^\bullet$}\label{subsec.tmf}

The main conjecture of \cite{MR2079378,MR2742432} (our Conjecture~\ref{conjecture.ST}) is that the spectrum $\SQFT^\bullet$ is equivalent to the ``universal elliptic cohomology'' spectrum $\TMF^\bullet$ called \define{Topological Modular Forms}. There is quite a lot of evidence in favour of this conjecture, and versions of it were predicted as early as \cite{MR885560,MR970288,MR992209}. Notably, Witten explained in the first of those papers that the then-recently-discovered ``elliptic genus'' of Landweber, Stong, and Ochanine arises as the $\bZ_2$-twisted partition function of the $\cN=(1,1)$ sigma model (with $\bZ_2$-action that breaks the the left-moving supersymmetry), and also introduced what is now known as the \define{Witten genus} by using instead an $\cN=(0,1)$ sigma model. 

Another piece of evidence supporting 
Conjecture~\ref{conjecture.ST}
is that the corresponding statement in $(0{+}1)$ dimensions is understood~\cite{CheungThesis,MR2648897,MR2653060,UlricksonSUSY}. Indeed, the construction from~\S\ref{subsec.spectrum}  with $\cN{=}1$ supersymmetric quantum mechanics models in place of $(1{+}1)$-dimensional QFTs  produces a spectrum $\SQM^\bullet$. If the SQM models are not required to support a time-reversal symmetry, then the resulting spectrum $\SQM^\bullet$ is known to model the complex K-theory spectrum $\mathrm{KU}^\bullet$. If the SQM models are equipped with a time-reversal symmetry then $\SQM^\bullet$ models orthogonal K-theory $\mathrm{KO}^\bullet$.
 (The time-reversal symmetry must satisfy $T^2 = +1$, corresponding to $\mathrm{Pin}^-$ under Wick rotation. The $\mathrm{Pin}^+$ case $T^2 = (-1)^f$ is not compatible with the dynamicalization procedure leading to a spectrum structure.)

Quantum mechanics has a complete mathematical axiomatization in terms of Hilbert spaces and von Neumann algebras, and the statement ``$\SQM^\bullet = \mathrm{K}^\bullet$'' is a mathematical theorem. Presuming that a complete mathematical axiomatization of unitary, compact $(1{+}1)$-dimensional quantum field theory can be found, Conjecture~\ref{conjecture.ST} offers an analytic model of $\TMF^\bullet$, for which so far the only known models are homotopy-algebraic. (Progress towards proving Conjecture~\ref{conjecture.ST} is available in \cite{CheungThesis,BET1,BET2}, which establish versions ``over the Tate curve'' and, equivariantly, over $\bC$.)

There are in fact three closely-related spectra that go under the name ``topological modular forms,''  distinguished by their capitalizations. The first, $\TMF^\bullet$, is the space of ``weakly holomorphic topological modular forms'': it is a homotopical refinement of the ring $\MF^\bullet$ of integral modular forms that are holomorphic for finite values values of $\tau$, but possibly meromorphic at the cusp $\tau = i\infty$. (By ``integral,'' we mean that the $q$-expansion lives in $\bZ(\!(q)\!)$. Modular forms of weight $w$ are assigned cohomological degree $n=-2w$.)
The algebrotopological definition of $\TMF^\bullet$ is a ``derived'' version of $\MF^\bullet$. Specifically, there is a ``derived stack'' $\cM_{ell}^{der}$ which refines the stack $\cM_{ell}$ of smooth elliptic curves. It carries a ``derived structure sheaf'' $\cO^{der}$ whose fibre at an elliptic curve $E \in \cM_{ell}^{der}$ is the spectrum presenting $E$-elliptic cohomology. The homotopy sheaf $\pi_{2w} \cO^{der} = L^{\otimes w}$ is the line bundle whose sections are weight-$w$ modular forms. 
(Constructing these derived algebrogeometric objects is hard \cite{MR2125040,MR2597740,MR2648680,MR3328535,EllipticI,EllipticII,EllipticIII}.)
Then $\TMF^\bullet$ is the spectrum of derived global sections of $\cO^{der}$. This is the spectrum that appears in Conjecture~\ref{conjecture.ST}.

The second spectrum, $\Tmf^\bullet$, is the space of ``holomorphic topological modular forms,'' analogous to the ring $\mf^\bullet$ of modular forms that are bounded at $\tau = i\infty$. Its definition parallels $\TMF^\bullet$ and $\mf^\bullet$:  compactify $\cM_{ell}^{der}$ to a derived stack $\overline{\cM}_{ell}^{der}$ that allows elliptic curves with nodal singularities, extend the derived structure sheaf, and take derived global sections. 
Because of the derived nature of these constructions, the homotopy groups of both $\TMF^\bullet$ and $\Tmf^\bullet$ include information about the cohomology of the line bundles $L^{\otimes w}$. In particular, even though there are no holomorphic modular forms of negative weight (positive cohomological degree), the line bundles $L^{\otimes w}$ for negative even $w$ do have cohomology over $\overline{\cM}_{ell}$, leading to nontrivial classes in $\pi_{-\bullet}\Tmf = \Tmf^{\bullet}(\pt)$ for $\bullet>0$.

The physical significance of $\Tmf^\bullet$ is not yet clear: there probably is an analogue of Conjecture~\ref{conjecture.ST} for $\Tmf^\bullet$, but a satisfactory one has not yet been proposed. One approach is suggested in \cite{MR2653060}, but this author doubts that that method can be made physically sensible in $(1{+}1)$ dimensions. A more direct approach would involve a spectral constraint on the operator $L_0$ in the Ramond sector which is strong enough to assure that the adjusted partition function $Z'_{RR}(\tau) = Z_{RR}(\tau) \eta^{2(c_R - c_L)}(\tau)$ converges as $\tau \to i\infty$.  For example, it should rule out the holomorphic SCFT $V^{f\natural}$, since that SCFT represents the class $\{24\Delta^{-1}\} \in \TMF^{24}(\pt)$, which is not in the image of $\Tmf^\bullet \to \TMF^\bullet$. On the other hand, $\Tmf^{21}(\pt) \cong\bZ$ is generated by a class that deserves the name $\{24\Delta^{-1}\nu\}$, which is is in the kernel of $\Tmf^{\bullet} \to \TMF^\bullet$. The author believes that the generator $\{24\Delta^{-1}\nu\}$ should be represented by the $\cN{=}(1,1)$ SCFT $V^{f\natural} \otimes \overline{\Fer(3)}$, but it is not clear how to tune the constraint so as to allow this.

Third, the spectrum $\tmf^\bullet$ is defined to be the connective cover of $\Tmf^\bullet$: as an $\Omega$-spectrum, $\tmf^n = \Tmf^n = \Omega^{-n}\Tmf^0$ for $n \leq 0$, but $\tmf^n = B^n \Tmf^0$ for $n > 0$. Said another way, $\tmf^\bullet$ is built by keeping only the $0$-space $\Tmf^0$ of the $\Tmf^\bullet$-spectrum, which is automatically an infinite loop space, and then interpreting infinite loop spaces as a special class of spectra.
In homotopy, we have
$$ \tmf^\bullet(\pt) = \begin{cases} \Tmf^\bullet(\pt), & \bullet \leq 0, \\ 0, & \bullet > 0. \end{cases}$$
More generally, if $X$ is a space, then $\tmf^\bullet(X) = \Tmf^\bullet(X)$ if $\bullet \leq 0$, but not for $\bullet > 0$.
We are interested in a particular $\rM_{24}$-equivariant $\TMF$-class $[\overline{V^{f\natural}}]\nu$ of cohomological degree $\bullet = -27$.  Nonequivariantly,
 $[\overline{V^{f\natural}}] = \{24\Delta\} \in \TMF^{-24}(\pt)$ is in the image of $\Tmf^{-27}(\pt)$, and we believe that this holds $\rM_{24}$-equivariantly as well. 
 Nonequivariantly $\{24\Delta\} \nu = 0 \in \tmf^{-27}(\pt)$, and hence in $\TMF^{-27}(\pt)$. Together with Conjecture~\ref{conjecture.ST}, this implies that $[\overline{V^{f\natural}}]\nu = 0 \in \SQFT^{-27}(\pt)$.

It is not expected that $\tmf^\bullet$ itself will admit a natural physical description. The reason is that any physical description in terms of spaces of SQFTs will naturally lead to a genuinely equivariant enhancement (by working with SQFTs  with a given flavour symmetry), but $\tmf^\bullet$ is not expected to admit a genuinely equivariant enhancement.
A better calculation than we will attempt in Section~\ref{sec.AHSS} would be to work out the equivariant cohomology $\Tmf^{-27}_\alpha(\bB \rM_{24})$, and perhaps show, as suggested by Theorem~\ref{thm.main}, that it vanishes away from the prime $p=2$. (Perhaps it even vanishes at $p=2$.) But there is no Atiyah--Hirzebruch spectral sequence for computing equivariant cohomology groups like $\Tmf^{-27}_\alpha(\bB \rM_{24})$, and so we will not attempt such a calculation. Rather, in Section~\ref{sec.AHSS} we will attempt $\Tmf^{-27}_\alpha(B\rM_{24})$, approximating the classifying stack $\bB G$ by its classifying space $BG$, and the above remarks identify $\Tmf^{-27}_\alpha(B\rM_{24}) \cong \tmf^{-27}_\alpha(B\rM_{24})$, since $B\rM_{24}$ is just a space, not a stack.

Physically, the difference between $\bB G$ and $BG$ is the following. As explained in \S\ref{subsec.anomalies}, a class in $\SQFT^\bullet_\omega(\bB G)$ is represented by a compact SQFT with $G$ flavour symmetry (and anomaly $\omega$). A class in $\SQFT^\bullet_\omega(B G)$ is instead represented by a family of SQFTs over the space $BG$. There is a map $\SQFT^\bullet_\omega(\bB G) \to \SQFT^\bullet_\omega(B G)$ which uses the $G$-action on an element $\cF \in \SQFT^\bullet_\omega(\bB G)$ to prescribe the monodromies of a family over $BG$ which is locally constant with value $\cF$. 
This map is definitely not an isomorphism of spaces, because its image consists of families which are locally constant, whereas a typical family over $BG$ may vary a lot. 
But we don't need it to be, since we care only about homotopy classes. With some work, a family of SQFTs over $BG$ can be ``integrated'' to a $G$-equivariant SQFT, and so one might expect that $\SQFT^\bullet_\omega(\bB G) \to \SQFT^\bullet_\omega(B G)$ is a homotopy equivalence. The problem is that $BG$ is infinite-dimensional, and so this ``integral'' will usually fail to produce a \emph{compact} SQFT. (A topological space is ``finite-dimensional'' if it is homotopy equivalent to a finite cell complex. Except for spaces homotopy equivalent to finite sets, no space is both finite-dimensional and finite in homotopy.) 
Indeed, there are sequences of $G$-equivariant SQFTs which diverge in $\SQFT^\bullet_\omega(\bB G)$ because their limits are  noncompact, but which converge in $\SQFT^\bullet_\omega(B G)$ because this noncompactness can be concentrated ``near infinity'' in $BG$: as you go out along a cell decomposition of $BG$, the family stays compact but becomes larger and larger.
As such, one expects $\SQFT^\bullet_\omega(BG)$ to be a ``completion'' of $\SQFT^\bullet_\omega(\bB G)$, analogous to the famous result from \cite{MR259946} describing $\mathrm{KU}^\bullet(BG)$ as a completion of $\mathrm{KU}^\bullet(\bB G)$. A  completion statement for $\TMF$ is known for finite abelian groups $G$ \cite{EllipticIII}. But we warn that direct computations in \cite{2004.10254} show that for $G = \rU(1)$, the map $\TMF^\bullet(\bB \rU(1)) \to \TMF^\bullet(B \rU(1))$ is far from a completion. (There is a more sophisticated ``completion'' statement that holds for $\rU(1)$ at the level of sheaves over $\cM_{ell}$. The failure of $\TMF^\bullet(\bB \rU(1)) \to \TMF^\bullet(B \rU(1))$ can then be traced to the non-affineness of the stack of elliptic curves with $\rU(1)$-bundle.)

In addition to the rings $\MF^\bullet$ and $\mf^\bullet$ of weakly holomorphic and holomorphic modular forms, number theorists care also about the ideal $\mathrm{cf}^\bullet \subset \mf^\bullet$ of \define{cusp forms}, which are the holomorphic modular forms which vanish at $\tau = i\infty$. Like modular forms, cusp forms admit a topological enhancement to a spectrum $\mathrm{Tcf}^\bullet$ of \define{topological cusp forms}. Summarizing a fair amount of hard work, the idea is to promote the restriction map $\mf^\bullet \to \cO(\text{cusp}) = \bZ$ to a map $\Tmf^\bullet \to \mathrm{KO}^\bullet$, which was done in \cite{MR3455154}. Then $\mathrm{Tcf}^\bullet$ is defined as the homotopy fibre of $\Tmf^\bullet \to \mathrm{KO}^\bullet$. The topology literature seems to contain very little investigation of $\mathrm{Tcf}^\bullet$, and, just like for $\Tmf^\bullet$, the physical significance it not yet clear. We will mention one interesting fact about $\mathrm{Tcf}^\bullet$, which makes its behaviour different from the classical case of $\mathrm{cf}^\bullet$. Namely, $\mathrm{cf}^\bullet$ is the principal ideal inside $\mf^\bullet$ generated by $\Delta$, and as such it represents (up to suspension, aka degree-shift) the trivial class in the Picard group of $\mf^\bullet$. But $\Delta$ does not lift to an element in $\Tmf^\bullet(\pt)$, and $\mathrm{Tcf}^\bullet$ is not isomorphic to a suspension of $\Tmf^\bullet$. Rather, in unpublished work L.\ Meier has identified the class of $\mathrm{Tcf}^\bullet$ with the exotic $24$-torsion element in the Picard group of $\Tmf^\bullet$ called $\Gamma(\cJ)$ in \cite{MR3590352}. (That paper shows that $\Gamma(\cJ)$ and suspension together generate $\operatorname{Pic}(\Tmf^\bullet) \cong \bZ_{24} \times \bZ$. Other exotic elements are studied in \cite{MR3685599,MR4054878}.)

The genus-zero property in Monstrous Moonshine is reformulated in \cite{MR3449012} in terms of an \define{optimal growth condition} which provides the ``moonshine'' part of Umbral (and in particular Mathieu) Moonshine. The condition (for $\rM_{24}$) is simply that the mock modular forms $H_{g,h}(\tau)$ grow no worse than $q^{-1/8}$ as $\tau \to i\infty$, and are bounded near other cusps. Recall from \S\ref{subsect.holomorphicanomaly} our proposal that $H_{g,h}(\tau)$ is the holomorphic part of $\eta(\tau) Z_{RR}(\cF)_{g,h}(\tau,\bar\tau)$, for some SQFT $\cF$ with with cylindrical ends $\partial \cF = \overline{V^{f\natural}}\otimes \overline{\Fer(3)}$ of cohomological degree $n =-28$, whereas the homotopy theory convention for Witten genera is to work with the adjusted partition function $Z'_{RR}(\tau,\bar\tau) = Z_{RR}(\tau,\bar\tau) \eta(\tau)^{-n}$. The optimal growth condition then becomes the statement that the adjusted  function $H_{g,h}(\tau) \eta(\tau)^{27} = O(q^1)$, i.e.\ it is a ``mock cusp form.''
This leads us to propose the following answer to Question~\ref{question.growth}:
\begin{conjecture}\label{conjecture.growth}
  The $\rM_{24}$-equivariant class $[\overline{V^{f\natural}}]\nu$, refining the cusp form $\{24\Delta\}\nu = 0 \in \Tmf^{-27}(\pt)$, is $\rM_{24}$-equivariantly nullhomotopic in the spectrum $\mathrm{Tcf}^\bullet$ of topological cusp forms.
\end{conjecture}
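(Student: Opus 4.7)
My approach is to exploit the defining fibre sequence $\operatorname{Tcf}^\bullet \to \Tmf^\bullet \to \mathrm{KO}^\bullet$ and reduce Conjecture~\ref{conjecture.growth} to Conjecture~\ref{conjecture.mainTMF} plus a compatibility statement in $\mathrm{KO}$-theory. The induced long exact sequence
\begin{equation*}
  \cdots \to \mathrm{KO}^{-28}_\alpha(\bB\rM_{24}) \to \operatorname{Tcf}^{-27}_\alpha(\bB\rM_{24}) \to \Tmf^{-27}_\alpha(\bB\rM_{24}) \to \mathrm{KO}^{-27}_\alpha(\bB\rM_{24}) \to \cdots
\end{equation*}
shows that $[\overline{V^{f\natural}}]\nu$ lifts from $\Tmf$ to $\operatorname{Tcf}^{-27}_\alpha(\bB\rM_{24})$ because $\nu \in \pi_3\mathrm{KO} = 0$ and $\Tmf \to \mathrm{KO}$ is multiplicative. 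Granting Conjecture~\ref{conjecture.mainTMF}, the lift lies in the image of $\mathrm{KO}^{-28}_\alpha(\bB\rM_{24})$, and its class there measures the difference between the $\Tmf$-nullhomotopy of $\{24\Delta\}\nu$ and the canonical $\mathrm{KO}$-nullhomotopy coming from $\nu = 0$. The first step is to identify this obstruction explicitly and show that, after possibly modifying the $\Tmf$-nullhomotopy by an element of $\Tmf^{-28}_\alpha(\bB\rM_{24})$, it can be made zero.

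To make the computation tractable I would imitate the strategy behind Theorem~\ref{thm.main}: replace the genuinely equivariant problem by the Borel one, invert~$2$, and run the Atiyah--Hirzebruch spectral sequence for $\operatorname{tcf}^\bullet_\omega(B\rM_{24})[\frac12]$. The $E_2$-page differs from the AHSS in Section~\ref{sec.AHSS} only through the replacement of $\pi_\bullet\tmf$ by the connective cusp-form groups $\pi_\bullet\operatorname{tcf}$; the computation of $\H^\bullet(\rM_{24};\bF_3)$ in Section~\ref{sec.HM24} transfers verbatim, and most of the first few differentials should carry over with only bookkeeping changes. I would expect the argument to yield $\operatorname{tcf}^{-27}_\omega(B\rM_{24})[\frac12] = 0$ for $\omega \neq 0$ by the same mechanism as Theorem~\ref{thm.main}, after which the obstruction above vanishes automatically at odd primes.

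The prime~$2$ is the principal technical difficulty. The $2$-local cohomology of $\rM_{24}$ is not fully known, and the $2$-local structure of $\tmf$ and $\operatorname{tcf}$ is substantially more intricate than at $3$. One would need either new computations of $\H^\bullet(\rM_{24};\bF_2)$ or a route that detects $[\overline{V^{f\natural}}]\nu$ at~$2$ by indirect means---perhaps restricting along subgroups whose $2$-local $\tmf$-cohomology is tractable (Sylow $2$-subgroups, or the maximal subgroups $2^4{:}A_8$ and $M_{22}{:}2$) and showing that these detect the relevant $2$-torsion.

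The deepest obstacle is the one already acknowledged for Conjecture~\ref{conjecture.mainTMF}: even a complete Borel-perturbative proof does not yield a genuinely equivariant vanishing. Bridging this gap would require either a completion theorem comparing $\operatorname{Tcf}^\bullet_\alpha(\bB\rM_{24})$ with $\operatorname{tcf}^\bullet_\alpha(B\rM_{24})$ analogous to Atiyah--Segal for K-theory---which is not currently available for $\operatorname{Tcf}$---or else a direct geometric construction of an $\rM_{24}$-equivariant SQFT with cylindrical ends whose adjusted partition function is bounded at every cusp and whose boundary is $\overline{V^{f\natural}} \otimes \overline{\Fer(3)}$. The latter is the most appealing physically, since it would simultaneously produce the mock-cusp-form-valued characters of Mathieu Moonshine, but it lies well beyond current technology.
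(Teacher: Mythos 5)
The statement you are addressing is labeled a \emph{Conjecture} in the paper, and the paper offers no proof of it---it is stated as an open problem, motivated by the identification of the optimal-growth condition of Cheng--Duncan--Harvey with the cusp-form condition $H_{g,h}(\tau)\eta(\tau)^{27}=O(q)$. Your text is therefore not being compared against an existing argument; it is a research sketch, and you are honest that it does not close to a proof. Evaluated on its own terms, the outline is sensible and is genuinely new relative to the paper.

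Two remarks on the substance. First, your fiber-sequence argument correctly produces \emph{some} lift of $[\overline{V^{f\natural}}]\nu$ to $\operatorname{Tcf}^{-27}_\alpha(\bB\rM_{24})$, using $\pi_3\mathrm{KO}=0$ and multiplicativity of $\Tmf\to\mathrm{KO}$, and correctly localizes the residual obstruction in the cokernel of $\Tmf^{-28}_\alpha(\bB\rM_{24})\to\mathrm{KO}^{-28}_\alpha(\bB\rM_{24})$. But Conjecture~1.5 of the paper asks for a specific $\Co_1$-equivariant refinement of $\{24\Delta\}$ in $\operatorname{Tcf}^{-24}_\omega(\bB\Co_1)$, and the Tcf-class in Conjecture~2.9 is implicitly the restriction of that one. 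Your fiber-sequence lift is only well-defined up to the image of $\mathrm{KO}^{-28}$, and nothing in your argument ties it to the physically canonical refinement. This ambiguity becomes moot only if you can show the entire target group $\operatorname{tcf}^{-27}_\omega(B\rM_{24})[\tfrac12]$ vanishes, which you propose but do not carry out. Second, the paper's proof of Theorem~\ref{thm.main} relies on the specific torsion structure of $\tmf^\bullet(\pt)[\tfrac12]$ (Proposition~\ref{prop.tmftorsion}), including the Massey products through $\mu$ and $\{\nu\Delta\}$; for $\operatorname{tcf}$ these coefficient groups and the relevant $\d_4,\d_5,\d_8$ differentials are different (for instance the classes coming from $\H^1(\overline{\cM}_{ell};L^{\otimes k})$ behave differently once the cusp is removed), so ``most of the first few differentials should carry over with only bookkeeping changes'' is optimistic and would need to be checked. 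Your diagnosis of the prime-$2$ difficulty and the Borel-versus-genuine gap as the deepest obstacles matches the paper's own assessment of what stands between Theorem~\ref{thm.main} and the full conjecture.
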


\section{Ordinary cohomology of $\rM_{24}$}\label{sec.HM24}

In order to run the Atiyah--Hirzebruch spectral sequence for $\tmf^\bullet_\omega(B\rM_{24})[\frac12]$, we will need good control over the ordinary group cohomology rings $\H^\bullet(\rM_{24};\bZ[\frac12])$ and $\H^\bullet(\rM_{24};\bF_3)$.
We find it necessary to invert the prime $2$ simply because the $2$-local computations are too hard.
These rings were computed in~\cite{MR1263724,MR1373961}. 
We will review and extend that analysis: our goal is to have explicit control over the action of the first Steenrod cube $\cP$ on $\H^\bullet(\rM_{24};\bF_3)$.

As with any finite group, the computation of $\H^\bullet(\rM_{24};\bZ[\frac12])$ factors prime-by-prime, and at the prime $p$, the computation is controlled by the Sylow $p$-subgroup. The primes $p\geq 5$ are quite straightforward: the Sylow $p$-subgroup is cyclic, from which it already follows that $\H^\bullet(\rM_{24};\bZ[\frac16])$ is supported in even degrees. Since $\tmf^\bullet(\pt)[\frac16]$ is supported in even degrees and has no torsion, we learn immediately that $\tmf^\bullet_\omega(B\rM_{24})[\frac16]$, which is independent of the twisting $\omega$ since $\omega$ is $12$-torsion, is also supported only in even degrees. A stronger statement is proved in~\cite{MR1263724}: $\tmf^\bullet_\omega(B\rM_{24})[\frac16]$ is generated by elliptic Chern classes. (The cohomology theory $\tmf^\bullet[\frac16]$ is called $E\ell\ell^\bullet$ in~\cite{MR1263724,MR982399}.)

\subsection{Computing in $\H^\bullet(\rM_{24};\bF_3)$}

Thus the interesting computation is at the prime $3$, where the Sylow subgroup is nonabelian, being isomorphic to the extraspecial group $S = 3^{1+2}_+$ of order $27$ and exponent $3$. The rings $\H^\bullet(\rM_{24};\bZ_{(3)})$ and $\H^\bullet(\rM_{24};\bF_3)$ are computed in~\cite{MR1373961}. We will report the main result, but change some letters:
\begin{theorem}[\cite{MR1373961}]
\label{thm.Green}
  The graded commutative ring $\H^\bullet(\rM_{24};\bZ_{(3)})$ has a presentation with four generators, of cohomological degree and additive order as follows:
  $$\begin{array}{c|c|c}
    \text{ Name } & \text{ Degree } & \text{ Additive order } \\ \hline
    r & 4 & 3 \\
    s & 12 & 9 \\
    t & 16 & 3 \\
    u & 11 & 3 
  \end{array}$$
  The only relations are $u^2 = 0$ (which follows from the Koszul sign rules) and $rt = 0$.
  
  We will use the same names for classes in $\H^\bullet(\rM_{24};\bZ_{(3)})$ as for their mod-3 reductions inside $\H^\bullet(\rM_{24};\bF_3)$. The $\operatorname{Ext}$ term in the universal coefficient theorem implies that each generator ``$x$'' of $\H^\bullet(\rM_{24};\bZ_{(3)})$ of cohomological degree $n$ also determines a generator of $\H^\bullet(\rM_{24};\bF_3)$ of degree $(n-1)$, which will be denoted ``$X$.'' 
  They are related by the mod-3 Bockstein $\Box$ for the extension $\bZ_3 \to \bZ_9 \to \bZ_3$:
  $$ \Box R = r, \qquad \Box S = 3s = 0, \qquad \Box T = t, \qquad  \Box U = u.$$
  Note that $\Box S = 0$ in $\H^\bullet(\rM_{24};\bF_3)$ because $s \in \H^\bullet(\rM_{24};\bZ_{(3)})$ has additive order $9$ and not $3$.
  The following is a complete list of relations for $\H^\bullet(\rM_{24};\bF_3)$ as an $\bF_3$-algebra:
  \begin{gather*}
    Ru = rU, \qquad TS = Tu = tU, \qquad tS = tu, \\
    u^2 = R^2 = T^2 = U^2 = rt = rT = uU = tR = RU = RT = UT = 0, \\
    rS = uS = RS = US = S^2 = 0.
  \end{gather*}
\end{theorem}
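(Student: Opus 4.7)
The plan is to follow the standard Sylow-theoretic approach of Cartan--Eilenberg, which is the method used in \cite{MR1373961}. The Sylow $3$-subgroup of $\rM_{24}$ is the extraspecial group $S = 3^{1+2}_+$ of order $27$ and exponent $3$. The first task is to verify that the normalizer $N = N_{\rM_{24}}(S)$ controls $3$-local fusion in $\rM_{24}$, which can be read off from the ATLAS character table and subgroup structure of $\rM_{24}$: the only non-identity $3$-element conjugacy classes are $3\rA$ and $3\rB$, and one checks case-by-case that any two $\rM_{24}$-conjugate subgroups of $S$ are already $N$-conjugate. Once this is in place, the restriction--transfer argument identifies
\[
  \H^\bullet(\rM_{24};\bF_3) \;\cong\; \H^\bullet(S;\bF_3)^{N/S}.
\]

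The next step is to assemble the two inputs needed for this invariant-theoretic computation. First, the mod-$3$ cohomology of the extraspecial group $S = 3^{1+2}_+$ is classical (due to Lewis): it is generated by two degree-$1$ classes and their Bocksteins together with a degree-$6$ polynomial class, with a known presentation and Poincar\'e series. Second, one identifies $N/S$ as a subgroup of $\mathrm{Out}(S) \cong \GL_2(\bF_3)$; for $\rM_{24}$ this quotient turns out to be isomorphic to $\mathrm{SD}_{16}$, the semidihedral group of order $16$ (the full normalizer of a Sylow in $\GL_2(\bF_3)$), acting on the generators in the standard way. Extracting the invariants of this action is a direct, finite computation, and produces classes one then names $R$, $S$, $T$, $U$ (and their Bocksteins $r$, $t$, $u$, together with $s$ lifting $S$ to a $\bZ_9$-class). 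Identifying which invariants are independent and which satisfy the listed relations $Ru = rU$, $TS = Tu = tU$, $tS = tu$, and the various nilpotency relations is then a bookkeeping exercise tracked by the Poincar\'e series and by checking products against a Gr\"obner-style basis.

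To pass to $\H^\bullet(\rM_{24};\bZ_{(3)})$ and determine the additive orders $3$, $9$, $3$, $3$ of $r,s,t,u$, the plan is to run the Bockstein spectral sequence. Most generators are clearly $3$-torsion because they lift immediately under $\Box$; the interesting point is the generator $s$ in degree $12$, whose lift to $\bZ_{(3)}$ coefficients has order $9$, equivalently $\Box S = 0$ while $S$ itself is not in the image of $\Box$. This is the degree in which $\H^\bullet(S;\bF_3)$ has a polynomial generator, and the doubling of its additive order on passing to $\rM_{24}$ is controlled by how the polynomial generator interacts with the $\mathrm{SD}_{16}$-action; the argument is completed by exhibiting an explicit $\bZ_9$-cocycle (or by dimension-counting against the Atiyah--Hirzebruch $E_2$-page for $\H^\bullet(BS;\bZ_{(3)})$ and taking $N/S$-invariants).

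The main obstacle is the invariant-theoretic step: the ring $\H^\bullet(S;\bF_3)$ is not a polynomial ring, the $\mathrm{SD}_{16}$-action mixes the degree-$1$ and degree-$2$ generators in a way that makes the invariants nontrivial to list by hand, and verifying that the relations claimed form a \emph{complete} set (rather than merely a valid set) requires matching the Hilbert series of the invariant subring against the Hilbert series computed from the stated presentation. Everything else---the fusion verification, the Bockstein analysis, and the identification of $rt = 0$ from the product structure---is straightforward once the invariants are in hand.
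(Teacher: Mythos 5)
The theorem under review is not proved in the paper at all: it is quoted verbatim (with a change of notation) from Green~\cite{MR1373961}, and the surrounding text detects and manipulates classes via restriction to the two conjugacy classes of maximal elementary abelian $3$-subgroups and their Weyl groups $\GL_2(\bF_3)$ and $D_{12}$ (Propositions 3.2 and 3.4), rather than by computing invariants in the cohomology of the Sylow subgroup.

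Your reduction
\[
  \H^\bullet(\rM_{24};\bF_3) \overset{?}{\cong} \H^\bullet(S;\bF_3)^{N/S}
\]
rests on the assertion that $N=N_{\rM_{24}}(S)$ controls $3$-fusion, and that assertion is false. The center $Z(S)\cong\bZ_3$ of $S = 3^{1+2}_+$ consists of $3\rA$-elements, but $S$ also contains noncentral $3\rA$-elements: one of the four maximal abelian subgroups of $S$ is of type $\bZ_{3\rA}\times\bZ_{3\rA}$, so its noncentral order-$3$ elements are $\rM_{24}$-conjugate to the central one. Any $g\in\rM_{24}$ realizing that conjugation cannot normalize $S$, since it would have to move the characteristic subgroup $Z(S)$. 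Equivalently, the Weyl group $W(\bZ_{3\rA}\times\bZ_{3\rA})\cong\GL_2(\bF_3)$ does not preserve the line $Z(S)$, whereas every automorphism induced by $N$ does. So Cartan--Eilenberg gives that $\H^\bullet(\rM_{24};\bF_3)$ is the ring of \emph{stable} elements, which is a priori only a subring of $\H^\bullet(S;\bF_3)^{N/S}$ cut out by the additional stability conditions imposed by the essential subgroups of the fusion system --- here the maximal elementary abelian $\bZ_3\times\bZ_3$'s with their Weyl groups $\GL_2(\bF_3)$ and $D_{12}$. Your step "one checks case-by-case that any two $\rM_{24}$-conjugate subgroups of $S$ are already $N$-conjugate" is precisely what fails, and it is exactly this extra fusion that makes the paper's double-detection through $\bZ_{3\rA}\times\bZ_{3\rA}$ and $\bZ_{3\rA}\times\bZ_{3\rB}$ (rather than a single pass through $N/S$-invariants) the correct framework.
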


A standard lemma (see e.g.\ Section XII.8 of~\cite{MR0077480}) implies that the restriction maps $\H^\bullet(\rM_{24};\bZ_{(3)}) \to \H^\bullet(S;\bZ_{(3)})$ and $\H^\bullet(\rM_{24};\bF_3) \to \H^\bullet(S;\bF_3)$ are injections onto direct summands, where $S = 3^{1+2}_+ \subset \rM_{24}$ is the Sylow 3-subgroup. The rings $\H^\bullet(S;\bZ_{(3)})$ and $\H^\bullet(S;\bF_3)$ are computed in~\cite{MR223430,MR1104598,MR1162933}. In particular, the third of those articles confirms the following result due to~\cite{MR1387653} (the order of final publication did not match the order in which the preprints were originally circulated):
\begin{proposition}[\cite{MR1387653}]\label{prop.MT}
  Let $A_1,A_2,A_3,A_4$ denote the four maximal abelian subgroups of~$S = 3^{1+2}_+$, each isomorphic to $\bZ_3 \times \bZ_3$. The total restriction map
  $$ \H^\bullet(S;\bF_3) \to \prod_i \H^\bullet(A_i;\bF_3)$$
  is an injection.
\end{proposition}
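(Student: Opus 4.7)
The plan would combine Quillen's F-isomorphism theorem with an explicit basis computation. Recall first the structure of $S = 3^{1+2}_+$: the center is $Z \cong \bZ_3$, the quotient $V := S/Z$ is elementary abelian of rank $2$, and each of the four order-$3$ subgroups of $V$ pulls back to one of the $A_i$, which is isomorphic to $\bZ_3 \times \bZ_3$ because $S$ has exponent $3$. The key combinatorial observation is that every elementary abelian $3$-subgroup of $S$ is contained in at least one $A_i$ (nontrivial cyclics lie in a unique $A_i$, and every rank-$2$ elementary abelian subgroup is one of the $A_i$ themselves).

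By Quillen's F-isomorphism theorem, the kernel of the restriction from $\H^\bullet(S;\bF_3)$ to the product of $\H^\bullet(E;\bF_3)$ over all elementary abelian subgroups $E \subseteq S$ is a nilpotent ideal; by the previous observation, the same holds for the kernel $K$ of our smaller map to $\prod_i \H^\bullet(A_i;\bF_3)$. It therefore suffices to show that no nonzero nilpotent class in $\H^\bullet(S;\bF_3)$ vanishes on every $A_i$.

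To do this, I would produce an explicit $\bF_3$-basis of $\H^\bullet(S;\bF_3)$ from the Lyndon--Hochschild--Serre spectral sequence for $1 \to Z \to S \to V \to 1$. Its $E_2$-page is $\H^\bullet(V;\bF_3) \otimes \H^\bullet(Z;\bF_3) \cong \bF_3[y_1,y_2]\otimes\Lambda(x_1,x_2) \otimes \bF_3[\gamma]\otimes\Lambda(\iota)$, and the transgressions for extraspecial $3$-groups of order $27$ and exponent $3$ have been worked out in detail in the references cited in the excerpt; this produces a finite, explicit generating set cleanly split between even-degree ``polynomial'' classes (Dickson-invariant-type) and odd-degree nilpotent classes.

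The final step is to check, class by class, that each nilpotent generator restricts nontrivially to some $A_i$. Since each $A_i$ is normal of index $3$ in $S$, the quotient $S/A_i \cong \bZ_3$ acts on $\H^\bullet(A_i;\bF_3) \cong \bF_3[y_i,z_i]\otimes\Lambda(u_i,v_i)$, and restrictions from $S$ must land in the invariant subring. Tracking the four distinct $\bZ_3$-actions (which are related by the outer action of $\mathrm{Sp}_2(\bF_3)$ permuting the $A_i$) and showing that they jointly separate the odd-degree generators is where the main work lies; the polynomial part is handled uniformly by Quillen's theorem. The principal obstacle is therefore the bookkeeping for these low-dimensional nilpotent classes, which is what requires the sharper detection results attributed to Minh--Tung rather than just Quillen.
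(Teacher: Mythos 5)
Note first that the paper offers no proof of this proposition: it is stated as a citation to Milgram--Tezuka (MR1387653), with the remark that Leary's computation independently confirms it. So there is no ``paper's proof'' to compare against; you are being judged on whether your outline would succeed on its own.

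Your overall strategy is the right one and is consistent with how the literature handles it: reduce via Quillen to the nilpotent part, then analyze the extension $1 \to Z \to S \to V \to 1$ and check detection explicitly. The preliminary observation that every elementary abelian subgroup of $S$ lies inside one of the $A_i$ is correct and genuinely useful (for $E$ cyclic and non-central, $EZ$ is one of the $A_i$; for rank $2$, $E$ has index $3$, is normal, must contain $Z$, and so is one of the $A_i$). You are also right that Quillen alone does not finish the job: there are groups for which nilpotent classes do vanish on all elementary abelian subgroups, so the detection statement is a theorem about $3^{1+2}_+$ specifically, not a formal consequence.

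That said, two things should be flagged. First, you have misattributed the detection result to ``Minh--Tung''; the reference actually in use here is Milgram--Tezuka. (Pham Anh Minh did write on extraspecial $p$-groups, but that is not what is cited, and there is no ``Minh--Tung'' paper in play.) Second, and more substantively, your proposal stops exactly where the content begins: you say you would ``produce an explicit basis'' from the LHS spectral sequence and then ``check, class by class'' that the nilpotent generators are detected, but you do not carry out either step, and the transgressions and the resulting odd-degree classes of $\H^\bullet(3^{1+2}_+;\bF_3)$ are not trivial to pin down. As written this is a plan for a proof rather than a proof. Since this is a cited result in the paper, it would be entirely legitimate to simply invoke Milgram--Tezuka or Leary directly rather than sketching a re-derivation; but if you want the argument to stand on its own, the class-by-class detection check is precisely what you would need to supply.
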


Since $\H^\bullet(\rM_{24};\bF_3) \subset \H^\bullet(S;\bF_3)$, it follows in particular that we can compute inside $\H^\bullet(\rM_{24};\bF_3)$ by computing restrictions to subgroups isomorphic to $\bZ_3 \times \bZ_3$. Let us describe these subgroups. 
First, $\rM_{24}$ has two conjugacy classes of elements of order $3$. 
Class~$3\rA$ consists of those elements in $\rM_{24}$ that act in the degree-24 permutation representation with cycle structure $1^6 3^3$;
 class~$3\rB$ acts with cycle structure~$3^8$. The central $\bZ_3 \subset S$ consists of $3\rA$-elements. 
There are also two conjugacy classes of subgroups $\bZ_3 \times \bZ_3 \subset \rM_{24}$. Both subgroups are maximal-abelian. One of them, which we will call simply $\bZ_{3\rA} \times \bZ_{3\rA}$, consists entirely of $3\rA$-elements. The other is  $\bZ_{3\rA} \times \bZ_{3\rB}$. In addition to the identity element, it contains two $3\rA$-elements (forming a $\bZ_3$-subgroup), and the other six elements are of class $3\rB$. The ``Weyl groups'' $W(A) = N(A)/A$ of these maximal abelian subgroups of $\rM_{24}$ are as large as possible given the conjugacy classes of elements:
$$ W(\bZ_{3\rA} {\times} \bZ_{3\rA}) = \frac{N(\bZ_{3\rA} {\times} \bZ_{3\rA})}{\bZ_{3\rA} {\times} \bZ_{3\rA}} \cong \GL_2(\bF_3), \qquad W(\bZ_{3\rA} {\times} \bZ_{3\rB}) = \frac{N(\bZ_{3\rA} {\times} \bZ_{3\rB})}{\bZ_{3\rA} {\times} \bZ_{3\rB}} \cong D_{12}.$$
By $D_{12}$ we mean the dihedral group of order $12$, isomorphic to the upper Borel $\bigl( \begin{smallmatrix} * & * \\ 0 & * \end{smallmatrix} \bigr) \subset \GL_2(\bF_3)$.
These and other claims about $\rM_{24}$ are easily checked in the computer algebra program GAP~\cite{GAP}.

For any finite group and any abelian subgroup $A \subset G$ and for any ring $R$, the restriction map $\rH^\bullet(G;R) \to \rH^\bullet(A;R)$ lands within the Weyl-invariant subring $\rH^0(W(A); \rH^\bullet(A;R))$. 
We therefore conclude:
$$ \H^\bullet(\rM_{24}; \bF_3) \subset \H^0(\GL_2(\bF_3); \H^\bullet(\bZ_{3\rA} \times \bZ_{3\rA}; \bF_3)) \times \H^0(D_{12}; \H^\bullet(\bZ_{3\rA} \times \bZ_{3\rB}; \bF_3)).$$
The next step is to understand the right-hand side. Let us choose ``coordinates'' on $\bZ_{3\rA} \times \bZ_{3\rA}$ and $\bZ_{3\rA} \times \bZ_{3\rB}$, writing $Y$, resp.\ $Z$, for the homomorphisms onto the ``fibre'' $\bZ_{3\rA}$, resp.\ and onto the ``base'' $\bZ_{3\rA}$ or $\bZ_{3\rB}$. After identifying $\bZ_3 = \bF_3$, these coordinates give classes in $\H^1(\bZ_3 \times \bZ_3; \bF_3)$. The full algebra $\H^\bullet(\bZ_3 \times \bZ_3; \bF_3)$ is then a graded polynomial algebra
$$ \H^\bullet(\bZ_3 \times \bZ_3; \bF_3) = \bF_3[ Y,Z,y,z] $$
where $y = \Box Y$ and $z = \Box Z$ are in degree $2$, and the only relations are the ones imposed by the Koszul sign rules: $Y^2 = Z^2 = YZ + ZY = 0$. Note also that mod-3 reduction identifies $\H^\bullet(\bZ_3\times\bZ_3; \bZ)$ with $\ker\Box \subset \H^\bullet(\bZ_3 \times \bZ_3; \bF_3)$ (except in degree $0$), and that this ring is generated by $y$ and $z$ and the degree-3 element $w = yZ - Yz = \Box(YZ)$. In particular, the subring of $\H^\bullet(\bZ_3 \times \bZ_3; \bF_3)$ consisting of even-degree elements with integral lifts is $\bF_3[y,z]$.

The $D_{12}$-action on $\bF_3[ Y,Z,y,z]$ is generated by the following three automorphisms:
\begin{gather*}
(Y,Z,y,z) \mapsto (-Y,z,-y,z), \qquad (Y,Z,y,z) \mapsto (Y,-z,y,-z), \\ (Y,Z,y,z) \mapsto (Y+Z,Z,y+z,z).
\end{gather*}
To generate the full $\GL_2(\bF_3)$-action, it suffices to include also the automorphism
$$(Y,Z,y,z) \mapsto (Z,Y,z,y).$$

\begin{lemma}\label{lemma.cz}
  The subring of $\bF_3[y,z]$ invariant under $(y,z) \mapsto (y+z,z)$ is $\bF_3[z,c]$ where $c = y(y+z)(y-z) = y^3 - yz^2$ is of cohomological degree $6$. The subring of $\bF_3[y,z,w]$ is $\bF_3[z,w,c]$.
\end{lemma}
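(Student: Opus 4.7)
The plan is to treat this as a small modular invariant theory computation for a cyclic group of order $3$. Write $\sigma$ for the automorphism $(y,z) \mapsto (y+z, z)$, generating a copy of $\bZ_3$. The orbit of $y$ under $\sigma$ is $\{y,\, y+z,\, y-z\}$, so its ``norm'' is exactly $c = y(y+z)(y-z) = y^3 - yz^2$. Together with the manifestly $\sigma$-invariant element $z$, this gives the easy inclusion $\bF_3[z,c] \subseteq \bF_3[y,z]^\sigma$.

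For the reverse inclusion, I would exploit that $c = y^3 - yz^2$ is monic of degree $3$ in $y$ over $\bF_3[z]$, so by polynomial division $\bF_3[y,z]$ is free of rank $3$ over $\bF_3[z,c]$ on the basis $\{1,\, y,\, y^2\}$. Given an invariant $f \in \bF_3[y,z]^\sigma$, expand $f = f_0 + f_1 y + f_2 y^2$ with $f_i \in \bF_3[z,c]$. Since the $f_i$ are themselves $\sigma$-fixed, a direct expansion yields $\sigma(f) - f = (f_1 z + f_2 z^2) + 2 f_2 z \, y$, which in the basis $\{1,y,y^2\}$ is a cascading system: the coefficient of $y$ forces $f_2 = 0$, and then the constant term forces $f_1 = 0$. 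So $f = f_0 \in \bF_3[z,c]$ as desired.

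For the second statement, the observation is that $w = yZ - Yz$ is itself $\sigma$-invariant: expanding $\sigma(w) = (y+z)Z - (Y+Z)z$, the extra contributions $zZ$ and $Zz$ cancel because $z$ is of even degree and $Z$ is of odd degree, so they commute. Moreover $w^2 = 0$ by the Koszul sign rule since $w$ has odd degree $3$. Hence $\bF_3[y,z,w] = \bF_3[y,z] \oplus \bF_3[y,z] \cdot w$ as an internal direct sum of $\bF_3[y,z]$-modules, a decomposition preserved by $\sigma$; taking invariants in each summand separately gives $\bF_3[z,c] \oplus \bF_3[z,c] \cdot w = \bF_3[z, w, c]$. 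I do not foresee a real obstacle; the entire content of the argument is the freeness over $\bF_3[z,c]$, which follows immediately from $c$ being monic of degree $3$ in $y$, the remainder being routine bookkeeping.
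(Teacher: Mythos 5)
Your proof is correct, and it takes a genuinely different route from the paper's. The paper argues degree by degree: for a homogeneous invariant $p = \sum p_i y^i z^{n-i}$, the condition $\sigma(p) = p$ forces the largest $i$ with $p_i \neq 0$ to be divisible by $3$ (because the coefficient of $y^{i-1}z^{n-i+1}$ changes by $i\,p_i$), whence the space of degree-$n$ invariants has dimension at most $1 + \lfloor n/3 \rfloor$, which matches the degree-$n$ part of $\bF_3[z,c]$. You instead use the structural fact that $\bF_3[y,z]$ is free of rank $3$ over $\bF_3[z,c]$ on $\{1,y,y^2\}$ (since $c$ is monic of degree $3$ in $y$ over $\bF_3[z]$), expand $\sigma(f) - f$ in that basis, and observe the system forces $f_2 = f_1 = 0$ since $\bF_3[z,c]$ is a domain. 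Both arguments are valid; yours is more conceptual and avoids any per-degree counting, while the paper's is more elementary and self-contained. The one place where your write-up is slightly abbreviated is the assertion of freeness and the uniqueness of the $\{1,y,y^2\}$-decomposition: the existence half is indeed immediate from $y$-monicity via polynomial division, and uniqueness can be seen quickly by grading $\bF_3[y,z]$ by $y$-degree (a nonzero relation $\sum r_i y^i = 0$ with $r_i \in \bF_3[z,c]$ would produce cancelling monomials of the same $y$-degree $3k+i$, but those determine $i$ and $k$ separately, a contradiction); you may wish to include a sentence to that effect. The second part of the lemma is handled the same way in the paper: $w^2 = 0$ gives $\bF_3[y,z,w] = \bF_3[y,z] \oplus w\,\bF_3[y,z]$, a $\sigma$-stable decomposition, so invariants are computed summand by summand. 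Your check that $\sigma(w) = w$ is correct (and could also be seen from $\sigma(YZ) = (Y+Z)Z = YZ$ since $Z^2 = 0$, then applying $\Box$).
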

Note that $\bF_3[y,z,w] = \H^\bullet(\bZ_3 \times\bZ_3;\bZ)$ except in degree $0$.
\begin{proof}
  Certainly $z$, $c$, and $w$ are invariant. Suppose $p(y,z) = p_0 z^n + p_{1} yz^{n-1} + \dots + p_n y^n$ is an invariant homogeneous polynomial of polynomial degree $n$ (cohomological degree $2n$). Then the largest $i$ with $p_i \neq 0$ must be divisible by $3$. Indeed, otherwise under $y \mapsto y+z$, the coefficient on $y^{i-1}z^{n-i+1}$ will change by $i p_i \neq 0$. So the space of invariant degree-$n$ polynomials is at most $(1+\lfloor \frac n 3 \rfloor)$-dimensional. But this is the dimension of the space of degree-$n$ polynomials in $\bF_3[z,c]$. The second claim follows from the first together with the fact that $\bF_3[y,z,w]$ = $\bF_3[y,z] \oplus w \bF_3[y,z]$, since $w^2 = 0$ by the Koszul sign rules.
\end{proof}

We note also that $z$ and $c$ are not invariant under $y\leftrightarrow z$, but that $z^6 + c^2 = y^6 + y^4 z^2 + y^2 z^4 + z^6$ 
and $z^2 c^2 = y^6 z^2 + y^4 z^4 + y^2 z^6$ are.

We can now work out the restrictions to $\bZ_{3\rA}\times\bZ_{3\rA}$ and $\bZ_{3\rA}\times\bZ_{3\rB}$ of the integral generators $r,s,t,u$. First, $r$ has cohomological degree $4$. There are no $\GL_2(\bF_3)$-invariant degree-4 classes, and so $r|_{\bZ_{3\rA}{\times}\bZ_{3\rA}} = 0$. Then Proposition~\ref{prop.MT} implies that $r|_{\bZ_{3\rA}{\times}\bZ_{3\rB}} \neq0$, and so is proportional to $z^2$, as that is the only degree-4 class invariant under $y \mapsto y+z$.
Changing the sign of $r$ if necessary, we learn:
$$ r|_{\bZ_{3\rA}{\times}\bZ_{3\rA}} = 0, \qquad r|_{\bZ_{3\rA}{\times}\bZ_{3\rB}} = z^2.$$
The extension class $R$ is also immediate, since it is an invariant degree-3 class satisfying $\Box R = r$.
$$ R|_{\bZ_{3\rA}{\times}\bZ_{3\rA}} = 0, \qquad R|_{\bZ_{3\rA}{\times}\bZ_{3\rB}} = Zz.$$
Note that $w$ is a degree-3 class invariant under $z \mapsto y+z$ and in the kernel of $\Box$, but it picks up a sign under some of the $D_{12}$ transformations, and so cannot appear here.

The next classes worth considering are the generators $T,t$, since $rT = rt = 0$. Thus both of these classes restrict trivially to $\bZ_{3\rA}\times\bZ_{3\rB}$, since $z^2$ is not a zero-divisor. Thus $T$ and $t$ restrict nontrivially to $\bZ_{3\rA}\times\bZ_{3\rA}$ by Proposition~\ref{prop.MT}. Since $t$ is an integral class of cohomological degree $16$, its restriction to $\bZ_{3\rA}\times\bZ_{3\rA}$ must a polynomial in $y$ and $z$ of polynomial degree $8$ invariant under all of $\GL_2(\bF_3)$. By using Lemma~\ref{lemma.cz}, it is easy to see that the only such polynomial is $c^2 z^2 = y^6z^2 + y^4z^4 + y^2 z^6$. Changing the sign of $t$ as necessary, we have:
$$ t|_{\bZ_{3\rA}{\times}\bZ_{3\rA}} = y^6z^2 + y^4z^4 + y^2 z^6, \qquad t|_{\bZ_{3\rA}{\times}\bZ_{3\rB}} = 0.$$
As for $T|_{\bZ_{3\rA}{\times}\bZ_{3\rA}}$, we need a $\GL_2(\bF_3)$-invariant degree-$15$ class satisfying $\Box T = t$. One could worry that there could be multiple choices. By Lemma~\ref{lemma.cz}, the only  degree-$15$ class in the kernel of $\Box$ which is $\GL_2(\bF_3)$-invariant \emph{up to a sign} is $w (c^2 + z^6)$, but this class changes sign under some of the involutions in $\GL_2(\bF_3)$. So $T$ is uniquely determined by invariance and $\Box T = t$. We have:
$$ T|_{\bZ_{3\rA}{\times}\bZ_{3\rA}} = Y(yz^6 - y^3 z^4) + Z(y^6z - y^4 z^3), \qquad T|_{\bZ_{3\rA}{\times}\bZ_{3\rB}} = 0.$$

We next consider the degree-$12$ generator $s$. 
It must have nontrivial restrictions to both $\bZ_{3\rA}\times\bZ_{3\rA}$ and $\bZ_{3\rA}\times\bZ_{3\rB}$, since neither $rs$ nor $st$ vanishes. The restriction to $\bZ_{3\rA}\times\bZ_{3\rA}$ is a polynomial in $y$ and $z$ of degree $6$, invariant under all of $\GL_2(\bF_3)$, and so must be $c^2 + z^6 = y^6 + y^4 z^2 + y^2 z^4 + z^6$, after possibly changing the sign of $s$. The restriction to $\bZ_{3\rA}\times\bZ_{3\rB}$ is some linear combination of $c^2$ and $z^6$. But note that, upon further restriction to the fibre $\bZ_{3\rA}$, the two restrictions must agree. On the other hand, we have the freedom to change $s \mapsto s\pm r^3$ without changing the presentation of $\H^\bullet(\rM_{24};\bZ)$ in Theorem~\ref{thm.Green}. We will choose the modification so that $s|_{\bZ_{3\rB}} = 0$.
 Thus we may assume:
$$ s|_{\bZ_{3\rA}{\times}\bZ_{3\rA}} = y^6 + y^4 z^2 + y^2 z^4 + z^6, \qquad s|_{\bZ_{3\rA}{\times}\bZ_{3\rB}} = y^6 + y^4 z^2 + y^2 z^4.$$

We will consider the two degree-$11$ generators $S$ and $u$ at the same time.
It will be convenient to replace $u$ with $v = u - S$.
 Note that $\Box S = \Box v = 0$, and so the restrictions of both must land in $\bF_3[y,z,w]$, where as above $w = \Box(YZ) = yZ - Yz$.
 Theorem~\ref{thm.Green} provides $rS = 0$ and $tv = 0$. Therefore $S|_{\bZ_{3\rA}{\times}\bZ_{3\rB}} = 0$ and $v|_{\bZ_{3\rA}{\times}\bZ_{3\rA}} = 0$.
By Lemma~\ref{lemma.cz}, the other restrictions are of the form $w p(z,c)$, where the polynomial $p(z,c)$ is of homogeneous degree $4$ in $y$ and $z$. After tracking the behaviour under $z \mapsto -z$ and $y \mapsto -y$, 
the only option is $p(z,c) = zc = y^3z - yz^3$, and so:
$$ S|_{\bZ_{3\rA}{\times}\bZ_{3\rA}} = Y(yz^4 - y^3 z^2) + Z(y^4z - y^2 z^3), \qquad S|_{\bZ_{3\rA}{\times}\bZ_{3\rB}} = 0.$$
The signs of $S$ and $u$ are not independent if we want to preserve the relation $tS = tu$ from Theorem~\ref{thm.Green}, and the author was not able to identify the correct sign for the restriction of $v = u-S$. We do have:
$$ v|_{\bZ_{3\rA}{\times}\bZ_{3\rB}} = 0, \qquad v|_{\bZ_{3\rA}{\times}\bZ_{3\rB}} = \pm \bigl( Y(yz^4 - y^3 z^2) + Z(y^4z - y^2 z^3)\bigr) .$$

Last, we have the degree-$10$ generator $U$, which must satisfy $\Box U = u = v+S$.
Since $U$ is of even degree and not in the kernel of $\Box$, its restriction must be of the form $p(y,z)YZ$, for some polynomial $p$ of polynomial degree $4$. Invariance then gives the answer:
$$ U|_{\bZ_{3\rA}{\times}\bZ_{3\rA}} = YZ(y^3 z - y z^3), \qquad U|_{\bZ_{3\rA}{\times}\bZ_{3\rB}} = \pm YZ(y^3 z - y z^3).$$
The sign is the same as above, set by $\Box U = v+S$.

In summary, we have shown:
\begin{proposition}\label{prop.restrictions}
 With notation as in Theorem~\ref{thm.Green}, and writing $v = u-S$, 
the generators of $\H^\bullet(\rM_{24};\bF_3)$ have the following  restrictions to the maximal abelian subgroups $\bZ_{3\rA}\times\bZ_{3\rA}$ and $\bZ_{3\rA}\times\bZ_{3\rB}$:
  $$\begin{array}{c|c|c}
  \text{Generator} & \bZ_{3\rA}\times\bZ_{3\rA} & \bZ_{3\rA}\times\bZ_{3\rB} \\ \hline
  R & 0 & Zz \\
  r & 0 & z^2 \\
  U & YZ(y^3 z - y z^3) & \pm YZ(y^3 z - y z^3) \\
  v & 0 & \pm \bigl( Y(yz^4 - y^3 z^2) + Z(y^4z - y^2 z^3)\bigr)\\
  S & Y(yz^4 - y^3 z^2) + Z(y^4z - y^2 z^3) & 0 \\
  s & y^6 + y^4 z^2 + y^2 z^4 + z^6 & y^6 + y^4 z^2 + y^2 z^4\\
  T & Y(yz^6 - y^3 z^4) + Z(y^6z - y^4 z^3)& 0\\
  t & y^6z^2 + y^4z^4 + y^2 z^6 & 0\\
  \end{array}
  $$
The sign $\pm$ is the same throughout.\end{proposition}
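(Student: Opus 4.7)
The plan is to leverage two structural inputs: Proposition~\ref{prop.MT} applied to $\rM_{24}$ (so that the total restriction to $\bZ_{3\rA}\times\bZ_{3\rA}$ and $\bZ_{3\rA}\times\bZ_{3\rB}$ is injective on the image of $\H^\bullet(\rM_{24};\bF_3)$), and the standard fact that each individual restriction lands in the Weyl-invariant subring of $\H^\bullet(A;\bF_3)$. With $W(\bZ_{3\rA}\times\bZ_{3\rA}) = \GL_2(\bF_3)$ and $W(\bZ_{3\rA}\times\bZ_{3\rB}) = D_{12}$, Lemma~\ref{lemma.cz} and its $\GL_2(\bF_3)$ analogue bound the Weyl-invariant subrings sharply enough in low degrees that, after using the ring relations of Theorem~\ref{thm.Green} to force vanishings, each restriction is pinned down up to a sign convention that can be absorbed into the choice of generator.

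I would process the classes in the order $r$, $t$, $s$, then $R$, $T$, then the triple $S$, $u$, $U$. For the even integral classes: no $\GL_2(\bF_3)$-invariant exists in polynomial degree $2$, so $r|_{\bZ_{3\rA}\times\bZ_{3\rA}} = 0$; Proposition~\ref{prop.MT} then forces $r|_{\bZ_{3\rA}\times\bZ_{3\rB}} \neq 0$, and $z^2$ is the only $D_{12}$-invariant option (up to rescaling, absorbed into $r$). The relation $rt = 0$ and the fact that $z^2$ is a non-zero-divisor force $t|_{\bZ_{3\rA}\times\bZ_{3\rB}} = 0$, and the unique $\GL_2(\bF_3)$-invariant of polynomial degree $8$ in the image of the Bockstein is $c^2 z^2 = y^6 z^2 + y^4 z^4 + y^2 z^6$. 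For $s$, both restrictions are nonzero (from $rs \neq 0 \neq st$); invariant theory in polynomial degree $6$ pins the $\bZ_{3\rA}\times\bZ_{3\rA}$ restriction to $c^2 + z^6$, and the freedom $s \mapsto s + \lambda r^3$ (which preserves the presentation of Theorem~\ref{thm.Green}) can be used to normalize the other restriction so that $s|_{\bZ_{3\rB}} = 0$.

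For the odd-degree classes $R$ and $T$, the identities $\Box R = r$ and $\Box T = t$ plus Weyl-invariance give unique Bockstein lifts of the correct cohomological degree; the only potential ambiguity would come from adding a multiple of $w = \Box(YZ)$ times an appropriate invariant, but every such candidate fails to be $D_{12}$- or $\GL_2(\bF_3)$-invariant on the nose. For the triple of degree-$11$ and degree-$10$ classes, I would set $v = u - S$ and use $rS = 0$ and $tv = 0$ (the latter from combining $Ru = rU$, $Tu = tU$, and $TS = tU$ in Theorem~\ref{thm.Green}) to force $S|_{\bZ_{3\rA}\times\bZ_{3\rB}} = 0$ and $v|_{\bZ_{3\rA}\times\bZ_{3\rA}} = 0$. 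The remaining two restrictions lie in the $Y,Z$-linear piece of $\bF_3[Y,Z,y,z]$ (since both $S$ and $v$ are in $\ker \Box$), and invariant theory plus sign-tracking under the nontrivial involutions in each Weyl group picks out $Y(yz^4 - y^3 z^2) + Z(y^4 z - y^2 z^3)$ uniquely up to an overall sign. The class $U$ has $\Box U = v + S$ and even degree with no integral lift, so its restriction is divisible by $YZ$; the Bockstein equation together with invariance then pins the answer to $YZ(y^3 z - y z^3)$ on both subgroups.

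The main obstacle is the overall sign in the restrictions of $v$ and $U$ to $\bZ_{3\rA}\times\bZ_{3\rB}$. The relation $tS = tu$ from Theorem~\ref{thm.Green} couples the sign conventions for $u$ and $S$, but the purely invariant-theoretic and Bockstein-based arguments above do not pin down which sign appears on the $3\rB$-side. Fixing it would require either an explicit cocycle-level computation in the Sylow subgroup $S = 3^{1+2}_+$ combined with a transfer calculation, or a finer analysis of the Lyndon--Hochschild--Serre spectral sequence for the normalizer $N(\bZ_{3\rA}\times\bZ_{3\rB})$. In keeping with the statement of the proposition, I would carry this sign through as an undetermined parameter, trusting that the downstream AHSS computations will only depend on sign-symmetric data.
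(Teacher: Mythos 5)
Your proposal follows the same route as the paper: Milgram--Tezuka injectivity (Proposition~\ref{prop.MT}) plus Weyl invariance via Lemma~\ref{lemma.cz}, with the ring relations of Theorem~\ref{thm.Green} forcing the vanishings and the same normalizations ($s \mapsto s \pm r^3$, $v = u - S$) pinning down the rest up to the unresolvable overall sign. One small slip worth fixing: $tv = 0$ does not follow from $Ru=rU$, $Tu=tU$, $TS=tU$ alone (those give only $Tv=0$); rather it is immediate from the listed relation $tS=tu$, or by applying $\Box$ to $Tv=0$ and using $\Box T = t$, $\Box v = 0$.
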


\subsection{Cohomology of $\cP + \epsilon r$ acting on $\H^\bullet(\rM_{24};\bF_3)$}\label{subsec.cohomofD}

Our next goal is to understand in detail the action of the first Steenrod cube $\cP$ on $\H^\bullet(\rM_{24};\bF_3)$. This is a degree-$4$ universal cohomology operation defined on all $\bF_3$-cohomology rings. Often the notation $\cP^k$ is used to denote the $k$th Steenrod cube. We will avoid that notation because it does not mean the $k$th power of $\cP$, writing instead $\cP^{(k)}$ for the $k$th Steenrod cube, and $\cP^{\circ k}$ for the $k$-fold composition.

 Among the defining properties of $\cP$ are that it is a derivation, that it vanishes in cohomological degree $1$, and that it is the cube in cohomological degree $2$. For example, on $\bZ_3 \times \bZ_3$, it satisfies
 $$ \cP(Y) = \cP(Z) = 0, \qquad \cP(y) = y^3, \qquad \cP(z) = z^3.$$
Propositions~\ref{prop.MT} and~\ref{prop.restrictions} then provide enough information to work out the action of $\cP$ on the generators of $\H^\bullet(\rM_{24};\bF_3)$:
  $$\begin{array}{c|c|c}
  \;\;g\;\; & \cP(g)|_{\bZ_{3\rA}\times\bZ_{3\rA}} & \cP(g)|_{\bZ_{3\rA}\times\bZ_{3\rB}} \\ \hline
  R & 0 & Zz^3 \\
  r & 0 & -z^4 \\
  U & 0 & 0 \\
  v & 0 & \pm \bigl( Y(-y^3 z^4 + y z^6) + Z(y^6 z - z^4 z^3) \bigr) \\
  S & Y(-y^3 z^4 + y z^6) + Z(y^6 z - z^4 z^3) & 0 \\
  s & y^6 z^2 + y^4 z^4 + y^2 z^6 & y^6 z^2 + y^4 z^4 + y^2 z^6 \\
  T & 0 & 0 \\
  t & 0 & 0 \\
  \end{array}
  $$
From this we learn:
\begin{proposition}\label{prop.Paction}
 With notation as in Proposition~\ref{prop.restrictions}, 
 the first Steenrod cube $\cP$ acts as:
 \begin{gather*} \cP(R)  = Rr, \qquad \cP(U)  = 0, \qquad \cP(S)  = T, \qquad \cP(T)  = 0,
 \\ 
 \cP(r) = -r^2, \qquad \cP(v)  = vr \pm Rs, \qquad \cP(s)  = sr + t, \qquad \cP(t)  = 0.
 \end{gather*}
\end{proposition}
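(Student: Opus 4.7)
The plan is to deduce all eight formulas from Proposition~\ref{prop.restrictions} by exploiting that the first Steenrod cube $\cP$ commutes with restriction maps, and that by Proposition~\ref{prop.MT} (and the standard fact that $\rH^\bullet(\rM_{24};\bF_3) \hookrightarrow \rH^\bullet(S;\bF_3)$ is a direct summand) the joint restriction
$$\rH^\bullet(\rM_{24};\bF_3) \longto \rH^\bullet(\bZ_{3\rA}{\times}\bZ_{3\rA};\bF_3) \times \rH^\bullet(\bZ_{3\rA}{\times}\bZ_{3\rB};\bF_3)$$
is injective. So it suffices to verify each identity after restriction to both maximal abelian subgroups.

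First I would compute $\cP$ on the generators of $\bF_3[Y,Z,y,z]$: the defining properties (derivation, vanishing in degree~$1$, cube in degree~$2$) give $\cP(Y)=\cP(Z)=0$ and $\cP(y)=y^3$, $\cP(z)=z^3$, and then the Cartan/Leibniz rule determines $\cP$ on every polynomial. Applying this to the restriction formulas in Proposition~\ref{prop.restrictions} produces explicit polynomials for $\cP(g)|_A$ for each generator $g \in \{R,r,U,v,S,s,T,t\}$ and each $A$; this step is mechanical, and the table already displayed just above Proposition~\ref{prop.Paction} summarizes the outcome.

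Next, for each $g$ I would identify the resulting polynomial as the restriction of a specific element of $\rH^\bullet(\rM_{24};\bF_3)$, again using Proposition~\ref{prop.restrictions}. For instance, $\cP(r)|_{\bZ_{3\rA}{\times}\bZ_{3\rB}} = -z^4 = -(r|_{\bZ_{3\rA}{\times}\bZ_{3\rB}})^2$ and $\cP(r)|_{\bZ_{3\rA}{\times}\bZ_{3\rA}}=0=-(r|_{\bZ_{3\rA}{\times}\bZ_{3\rA}})^2$, forcing $\cP(r) = -r^2$ by injectivity; similarly $\cP(R) = Rr$, $\cP(S) = T$, $\cP(s) = sr+t$, and $\cP(T)=\cP(t)=\cP(U)=0$ follow since each of these restrictions vanishes on both subgroups, leaving only the zero class in the allowed degree. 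The case $\cP(v)$ is handled the same way: the polynomial $Y(-y^3z^4+yz^6) + Z(y^6z - y^4z^3)$ matches $vr \pm Rs$ after combining $v|_{\bZ_{3\rA}{\times}\bZ_{3\rB}}$ with $r|_{\bZ_{3\rA}{\times}\bZ_{3\rB}} = z^2$ and $R|_{\bZ_{3\rA}{\times}\bZ_{3\rB}} = Zz$ against $s|_{\bZ_{3\rA}{\times}\bZ_{3\rB}}$.

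The main obstacle is bookkeeping rather than conceptual: one must check that for each generator there really is a \emph{unique} class in $\rH^\bullet(\rM_{24};\bF_3)$ of the correct degree with the computed pair of restrictions. For the generators whose $\cP$-images land in low degrees this is immediate from the presentation in Theorem~\ref{thm.Green}, but one should verify carefully that no ambiguities arise (e.g.\ no extra additive class in degree $15$ restricting trivially to both maximal abelians), and that the unresolved sign $\pm$ in Proposition~\ref{prop.restrictions} propagates consistently through the $\cP(v)$ formula, which is why the answer there is only determined up to the same sign.
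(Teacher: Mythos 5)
Your proposal is correct and is essentially the same approach the paper takes: compute $\cP$ on the explicit polynomial restrictions from Proposition~\ref{prop.restrictions}, use the injectivity of restriction to the two maximal abelian $\bZ_3\times\bZ_3$'s (via Proposition~\ref{prop.MT} and the direct-summand property of restriction to the Sylow), and match the resulting pair of polynomials to a unique class in $\H^\bullet(\rM_{24};\bF_3)$. The only small redundancy is your closing worry about ``ambiguities'' in identifying the class: since you have already invoked injectivity of the joint restriction, uniqueness is automatic and does not need a separate degree-by-degree check.
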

Recalling that $u = v+S$, we note that $$\cP(u) = vr \pm Rs + T = (u-S)r \pm Rs + T = ur + T \pm Rs,$$ since $Sr = 0$. 
\begin{proof}
  The most interesting case is $\cP(v)$; the other cases are left to the reader. Recall that $v$, and hence $\cP(v)$, vanishes when restricted to $\bZ_{3\rA} \times \bZ_{3\rA}$. Their other restrictions are
  \begin{gather*}
   v|_{\bZ_{3\rA} \times \bZ_{3\rB}} = \pm \bigl( Y(yz^4 - y^3 z^2) + Z(y^4z - y^2 z^3)\bigr),\\
   \cP(v)|_{\bZ_{3\rA} \times \bZ_{3\rB}} = \pm \bigl( Y(yz^6 - y^3 z^4) + Z(y^6 z - y^4 z^3) \bigr),
  \end{gather*}
  where for example we use $\cP(yz^4 - y^3 z^2) = y^3z^4 + 4yz^6 - 3y^5 z^2 - 2y^3 z^4 = yz^6 - y^3 z^4$. Comparing $\cP(v)|_{\bZ_{3\rA} \times \bZ_{3\rB}}$ with $vr|_{\bZ_{3\rA} \times \bZ_{3\rB}} = vz^2$, we find a discrepency:
  $$ (\cP(v) - vr)|_{\bZ_{3\rA} \times \bZ_{3\rB}} = \pm Z\bigl(  (y^6 z - y^4 z^3) - z^2(y^4z - y^2 z^3)\bigr) = \pm Z(y^6z + z^4 z^3 + y^2 z^5).$$
  Factoring out $Zz = R|_{\bZ_{3\rA} \times \bZ_{3\rB}}$ gives $\pm s|_{\bZ_{3\rA} \times \bZ_{3\rB}}$.
\end{proof}

Fix some $\epsilon \in \bF_3$. The specific understanding that we seek is the following: we will calculate  the ``cohomology'' of the $\cD = \cP + \epsilon r$. 
This operator is not a differential in the usual sense: $\cD\circ\cD \neq 0$. Rather, it is a \define{$3$-differential} in the sense that its {cube} is zero:
\begin{lemma}\label{lemma.Dis3dif}
  Let $X$ be a space, and choose $x \in \H^4(X;\bF_3)$. Then the operator $\cD = \cP + x$ on $\H^\bullet(X;\bF_3)$ satisfies $\cD^{\circ 3} = 0$.
\end{lemma}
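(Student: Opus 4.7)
The plan is to expand $\cD^{\circ 3} = (\cP + x)^{\circ 3}$ in the non-commutative algebra of $\bF_3$-linear operators on $\H^\bullet(X;\bF_3)$ generated by $\cP$ and the ``multiplication by $x$'' operator, and then simplify. Throughout, I write $\cP$ for $\cP^{(1)}$ and abuse notation by writing $x$ also for the operator of multiplication by $x$. Since $|x|=4$ is even, there are no Koszul signs to worry about, and the Cartan formula tells us that $\cP$ is a derivation, which translates into the operator identity $\cP \cdot x = \cP(x) + x \cdot \cP$ (where $\cP(x) \in \H^8(X;\bF_3)$ is again viewed as a multiplication operator).

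Using this commutation relation repeatedly, together with $\cP(x^2) = 2x\,\cP(x)$, expand $(\cP + x)^{\circ 3}$ into a sum of monomials of the form $\cP^{\circ i} \cdot y \cdot \cP^{\circ j}$ where $y$ is multiplication by some iterated application of $\cP$ to a power of $x$. The cross-terms organize themselves so that each monomial in the result appears with a multiplicity divisible by $3$, except for three: $\cP^{\circ 3}$, the operator of multiplication by $\cP(\cP(x))$, and multiplication by $x^3$. So modulo $3$,
\[ \cD^{\circ 3}(a) = \cP^{\circ 3}(a) + \cP^{\circ 2}(x) \cdot a + x^3 \cdot a. \]

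Each of the three surviving terms is handled by a general fact about the mod-$3$ Steenrod algebra. For the first: by the Adem relation $\cP^{(1)}\cP^{(1)} = 2\cP^{(2)}$ and the further Adem relation $\cP^{(1)}\cP^{(2)} = 0$ (both of which are immediate from the mod-$3$ Adem formula at $p=3$), we get $\cP^{\circ 3} = 2 \cP^{(1)}\cP^{(2)} = 0$. For the second: since $|x| = 4$, the unstable axiom gives $\cP^{(2)}(x) = x^3$ and $\cP^{(k)}(x) = 0$ for $k > 2$; hence $\cP^{\circ 2}(x) = 2\cP^{(2)}(x) = -x^3$. Combining,
\[ \cD^{\circ 3}(a) = 0 + (-x^3)\cdot a + x^3 \cdot a = 0. \]

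This proof is essentially purely formal, so there is no genuine obstacle; the only point requiring care is the bookkeeping in the expansion to confirm that exactly the three terms above survive modulo $3$. One can alternatively organize the computation by noting that modulo $3$ the operator $\cD$ satisfies a ``Freshman's dream'' identity $\cD^{\circ 3} = \cP^{\circ 3} + (\text{adjustment})$, where the adjustment measures the failure of $\cP$ and $x$ to commute, and then identifying this adjustment with $\cP^{(2)}(x) + x^3$ via the Adem and unstable axioms.
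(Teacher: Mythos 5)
Your proof is correct and follows essentially the same route as the paper's: expand $(\cP+x)^{\circ 3}$ using the derivation identity $\cP\circ x = x\cP + \cP(x)$, observe that everything cancels modulo $3$ except $\cP^{\circ 3} + \cP^{\circ 2}(x) + x^3$, and then dispose of the three survivors using the Adem relations ($\cP^{\circ 3}=0$, $\cP^{\circ 2}=-\cP^{(2)}$) and the unstable identity $\cP^{(2)}(x)=x^3$ for $|x|=4$. The only difference is cosmetic---you carry the $\cP^{\circ 3}$ term along as one of the three survivors rather than killing it at the outset, and you gesture at the cancellation bookkeeping rather than writing out all eight terms as the paper does---but the argument is the same.
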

\begin{proof}
  Expanding $(\cP + x)^{\circ 3}$, we have:
  \begin{align*}
    \cD^{\circ 3} & = \cP^{\circ 3} + \cP^{\circ 2} \circ x + \cP \circ x \circ \cP + x \circ \cP^{\circ 2} + \cP \circ x^2 + x \circ  \cP \circ x + x^2 \circ \cP + x^3.
  \end{align*}
  The Adem relations provide $\cP^{\circ 3} = 0$  and  $\cP^{\circ 2} = -\cP^{(2)}$, where $\cP^{(2)}$ denotes the second Steenrod power.  The statement ``$\cP$ is a derivation'' may be summarized as:
  $$\cP \circ x = x \cP + \cP(x).$$
  Here and in the sequel, by ``$x\cP$'' we mean of course $x \circ \cP$, i.e.\ apply $\cP$ and then multiply by~$x$, whereas ``$\cP(x)$'' means multiplication by $\cP(x)$.
  Thus we find:
  \begin{multline*}
   \cD^{\circ 3}  = 0 + (\cP^{\circ 2}(x) + 2 \cP(x) \cP + x \cP^{\circ 2}) + (\cP(x) \cP + x \cP^{\circ 2}) + x \cP^{\circ 2} \\
   + (2x\cP(x) + x^2 \cP) + (x \cP(x) + x^2 \cP) + x^2 \cP + x^3.\end{multline*}
   Since we are in characteristic $3$, everything cancels to
   $$ \cD^{\circ 3} = \cP^{\circ 2}(x) + x^3.$$
   But $\cP^{\circ 2}(x) = -\cP^{(2)}(x) = -x^3$ since $x$ has degree $4$.
\end{proof}

 Appendix~\ref{appendix-3complex} contains an extended discussion of $3$- and higher differentials. 
For our purposes, it suffices to record the following. A usual differential on a $\bK$-vector space makes it into a module for the algebra $\bK[\cD]/(\cD^2)$; the usual cohomology is the result of decomposing the module as a direct sum of indecomposables, and discarding the free summands. We have instead an action of $\bF_3[\cD]/(\cD^3)$ on a vector space $V$, and its \define{cohomology} $\rH^*(V,\cD)$ is the result of decomposing $V$ as a sum of indecomposable $\bF_3[\cD]/(\cD^3)$-modules and discarding the free summands. Whereas in the usual case the non-free indecomposable module is unique, over $\bF_3[\cD]/(\cD^3)$ there are two isomorphism classes of non-free indecomposable modules, of $\bF_3$-dimensions $1$ and $2$. So $\rH^*(V,\cD)$ is not just a vector space, but rather picks up a $\bZ_2$-grading in addition to any cohomological grading on $V$. One of the punchlines of Appendix~\ref{appendix-3complex} is that this $\bZ_2$-grading is really a fermionic grading: $\rH^*$ converts the two-dimensional indecomposable $\bF_3[\cD]/(\cD^3)$-module into an odd line $\bF_3^{0|1}$. The other punchline is that $\rH^*$ is functorial and symmetric monoidal.

To illustrate this, and as a warm-up to the $\rM_{24}$-case that we care about, let us study the cohomology of $\cP + \epsilon a^2$ on $\H^\bullet(\bZ_3;\bF_3) = \bF_3[A,a]$, where $A$ has degree $1$ and $a = \Box A$ has degree $2$. Since $\cP(A) = 0$, multiplication by $A$ is an isomorphism of $3$-complexes between the even-degree cohomology $\bF_3[a]$ and the odd-degree cohomology $A\bF_3[a]$. So it suffices to understand the cohomology of $\cD = a^3\frac\partial{\partial a} + \epsilon a^2$ on $\bF_3[a]$, where $\epsilon \in \bF_3$. On monomials, we have $\cD(a^i) = (i+\epsilon)a^{i+2}$. For $\epsilon = 0$, this complex looks like:
$$\begin{tikzpicture}[anchor=base]
  \path
  (0,1) node (a0) {$a^0$}
  (1,0) node (a1) {$a^1$}
  (2,1) node (a2) {$a^2$}
  (3,0) node (a3) {$a^3$}
  (4,1) node (a4) {$a^4$}
  (5,0) node (a5) {$a^5$}
  (6,1) node (a6) {$a^6$}
  (7,0) node (a7) {$a^7$}
  (8,1) node (a8) {$a^8$}
  (9,0) node (a9) {$a^9$}
  (10,1) node (a10) {$a^{10}$}
  (11,0) node (a11) {$a^{11}$}
  (12,1) node (a12) {$\cdots$}
  (13,0) node (a13) {$\cdots$}
  ;
  \draw[->] (a1) -- node[auto] {$\scriptstyle +1$} (a3);
  \draw[->] (a2) -- node[auto] {$\scriptstyle -1$} (a4);
  \draw[->] (a4) -- node[auto] {$\scriptstyle +1$} (a6);
  \draw[->] (a5) -- node[auto] {$\scriptstyle -1$} (a7);
  \draw[->] (a7) -- node[auto] {$\scriptstyle +1$} (a9);
  \draw[->] (a8) -- node[auto] {$\scriptstyle -1$} (a10);
  \draw[->] (a10) -- node[auto] {$\scriptstyle +1$} (a12.west |- a10);
  \draw[->] (a11) -- node[auto] {$\scriptstyle -1$} (a13.west |- a11);
\end{tikzpicture}$$
The non-free summands---the cohomology---are $\{a^0\}$ and $\{a^1 \to a^3\}$.
For $\epsilon=1$, we have instead:
$$\begin{tikzpicture}[anchor=base]
  \path
  (0,1) node (a0) {$a^0$}
  (1,0) node (a1) {$a^1$}
  (2,1) node (a2) {$a^2$}
  (3,0) node (a3) {$a^3$}
  (4,1) node (a4) {$a^4$}
  (5,0) node (a5) {$a^5$}
  (6,1) node (a6) {$a^6$}
  (7,0) node (a7) {$a^7$}
  (8,1) node (a8) {$a^8$}
  (9,0) node (a9) {$a^9$}
  (10,1) node (a10) {$a^{10}$}
  (11,0) node (a11) {$a^{11}$}
  (12,1) node (a12) {$\cdots$}
  (13,0) node (a13) {$\cdots$}
  ;
  \draw[->] (a0) -- node[auto] {$\scriptstyle +1$} (a2);
  \draw[->] (a1) -- node[auto] {$\scriptstyle -1$} (a3);
  \draw[->] (a3) -- node[auto] {$\scriptstyle +1$} (a5);
  \draw[->] (a4) -- node[auto] {$\scriptstyle -1$} (a6);
  \draw[->] (a6) -- node[auto] {$\scriptstyle +1$} (a8);
  \draw[->] (a7) -- node[auto] {$\scriptstyle -1$} (a9);
  \draw[->] (a9) -- node[auto] {$\scriptstyle +1$} (a11.west |- a9);
  \draw[->] (a10) -- node[auto] {$\scriptstyle -1$} (a12.west |- a10);
\end{tikzpicture}$$
The cohomology is just $\{a^0 \to a^2\}$. Finally, for $\epsilon = -1$, we see:
$$\begin{tikzpicture}[anchor=base]
  \path
  (0,1) node (a0) {$a^0$}
  (1,0) node (a1) {$a^1$}
  (2,1) node (a2) {$a^2$}
  (3,0) node (a3) {$a^3$}
  (4,1) node (a4) {$a^4$}
  (5,0) node (a5) {$a^5$}
  (6,1) node (a6) {$a^6$}
  (7,0) node (a7) {$a^7$}
  (8,1) node (a8) {$a^8$}
  (9,0) node (a9) {$a^9$}
  (10,1) node (a10) {$a^{10}$}
  (11,0) node (a11) {$a^{11}$}
  (12,1) node (a12) {$\cdots$}
  (13,0) node (a13) {$\cdots$}
  ;
  \draw[->] (a0) -- node[auto] {$\scriptstyle -1$} (a2);
  \draw[->] (a2) -- node[auto] {$\scriptstyle +1$} (a4);
  \draw[->] (a3) -- node[auto] {$\scriptstyle -1$} (a5);
  \draw[->] (a5) -- node[auto] {$\scriptstyle +1$} (a7);
  \draw[->] (a6) -- node[auto] {$\scriptstyle -1$} (a8);
  \draw[->] (a8) -- node[auto] {$\scriptstyle +1$} (a10);
  \draw[->] (a9) -- node[auto] {$\scriptstyle -1$} (a11);
  \draw[->] (a11) -- node[auto] {$\scriptstyle +1$} (a13.west |- a11);
\end{tikzpicture}$$
The cohomology is $\{a^1\}$. In all cases, the ``tails'' are exact: the only cohomology is near the ``head'' $a^0$.

We now turn to the case we care about, which is the cohomology of $\cD = \cP + \epsilon r$ on $\H^\bullet(\rM_{24};\bF_3)$. Note that $\cD$ preserves the cohomological degree mod $4$. It also preserves an \define{auxiliary degree} defined by assigning auxiliary degree $0$ to $R$ and $r$ and auxiliary degree $+1$ to $U,v,S,s,T,$ and $t$.

The following monomials are a basis of the submodule of $\H^\bullet(\rM_{24};\bF_3)$ in cohomological degree $0\pmod 4$ and auxiliary degree $j$:
$$s^j, \qquad s^j r^i, \; 0 < i, \qquad s^{j-i} t^i, \; 0 < i \leq j.$$
The action of $\cD = \cP + \epsilon r$ is:
$$\begin{tikzpicture}[anchor=base,xscale=2,auto]
\path
(.25,0) node (s) {$s^j$}
(1,1) node (sb1) {$s^jb$} (2,1) node (sb2) {$s^jb^2$} (3,1) node (sb3) {$s^jb^3$} (4,1) node (sb4) {$s^jb^4$}  (5,1) node (sb5) {$\cdots$}
(1,-1) node (st1) {$s^{j-1}t$} (2,-1) node (st2) {$s^{j-2}t^2$} (3,-1) node (st3) {$s^{j-3}t^3$} (4,-1) node (st4) {$s^{j-4}t^4$} (5,-1) node (st5) {$\cdots$} (5.75,-1) node (st6) {$t^j$}  
;
\draw[->] (s) -- node {$\scriptstyle j+\epsilon$} (sb1);
\draw[->] (sb1) -- node {$\scriptstyle j+\epsilon-1$} (sb2);
\draw[->] (sb2) -- node {$\scriptstyle j+\epsilon-2$} (sb3);
\draw[->] (sb3) -- node {$\scriptstyle j+\epsilon-3$} (sb4);
\draw[->] (sb4) -- node {$\scriptstyle j+\epsilon-4$} (sb5.west |- sb4);
\draw[->] (s) -- node {$\scriptstyle j$} (st1);
\draw[->] (st1) -- node {$\scriptstyle j-1$} (st2);
\draw[->] (st2) -- node {$\scriptstyle j-2$} (st3);
\draw[->] (st3) -- node {$\scriptstyle j-3$} (st4);
\draw[->] (st4) -- node {$\scriptstyle j-4$} (st5.west |- st4);
\draw[->] (st5.east |- st6) -- node {$\scriptstyle 1$} (st6);
\end{tikzpicture}$$
As in the warm-up example, the tails are exact. The cohomology near the head depends on the values of both $j$ and $\epsilon$ mod $3$. Going through all nine cases, we find $\cD$-cohomology in the following cohomological degrees:
$$ \begin{array}{cr||c|c|c}
 && \multicolumn{3}{c}{j \pmod 3} \\
      && 0 & 1 & -1 \\ \hline \hline
\multirow{3}{*}{$\epsilon$} &  0  & \{12j\} & \{12j \to 12j+4\} \oplus \{12j+4\} & \{12j+4 \to 12j+8\} \\ \cline{2-5}
  & 1 & \{12j \to 12j+4\} & \{12j+4\} & \emptyset \\ \cline{2-5}
 & -1  & \emptyset & \{12j \to 12j+4\} & \{12j+4\} \\ 
\end{array}$$
For example, when $\epsilon = 0$ and $j = 1 \pmod 3$, cutting off the manifestly exact tails returns
$$\begin{tikzpicture}[anchor=base,xscale=2,auto]
\path
(.25,0) node (s) {$s^j$}
(1,1) node (sb1) {$s^jb$} (2,1) 
(1,-1) node (st1) {$s^{j-1}t$} (2,-1) 
;
\draw[->] (s) -- node {$\scriptstyle 1$} (sb1);
\draw[->] (s) -- node {$\scriptstyle 1$} (st1);
\end{tikzpicture}$$
The submodule $\{s^j \to s^jb + s^{j-1}t\}$ splits off as a direct summand, and the other summand is one-dimensional, generated by the cohomology class $[s^jb]$, which is cohomologous to $-[s^{j-1}t]$.

Multiplication by $U$ is an isomorphism between the subspace of $\H^\bullet(\rM_{24};\bF_3)$ of cohomological degree $0\pmod 4$ and the subspace of cohomological degree $2\pmod 4$. Since $\cP(U) = 0$, this isomorphism is in fact an isomorphism of $\bF[\cD]/(\cD^3)$-modules. Thus we immediately learn that the $\cD$-cohomology in cohomological degree $2\pmod 4$ consists of:
$$ \begin{array}{cr||c|c|c}
 && \multicolumn{3}{c}{j \pmod 3} \\
      && 0 & 1 & -1 \\ \hline \hline
\multirow{3}{*}{$\epsilon$} &  0  & \{12j+10\} & \{12j+10 \to 12j+14\} \oplus \{12j+14\} & \{12j+14 \to 12j+18\} \\ \cline{2-5}
  & 1 & \{12j+10 \to 12j+14\} & \{12j+14\} & \emptyset \\ \cline{2-5}
 & -1  & \emptyset & \{12j+10 \to 12j+14\} & \{12j+14\} \\ 
\end{array}$$

Theorem~\ref{thm.Green} implies that $\H^\bullet(\rM_{24};\bF_3)$ vanishes in cohomological degree $1\pmod 4$. The only remaining case is cohomological degree $3\pmod 4$. Monomials of cohomological degree $3\pmod 4$ are divisible by exactly one of $R,v,S,T$. The first two vanish when restricted to $\bZ_{3\rA} \times \bZ_{3\rA}$, and the second two vanish when restricted to $\bZ_{3\rA} \times \bZ_{3\rB}$. This allows us to split the $\bullet = 1\pmod 4$ subcomplex of $\H^\bullet(\rM_{24};\bF_3)$ into two summands: the kernel of restriction to $\bZ_{3\rA} \times \bZ_{3\rA}$ and the kernel of restriction to $\bZ_{3\rA} \times \bZ_{3\rB}$. (These kernels are disjoint by Proposition~\ref{prop.MT}.)

The first summand consists of those terms divisible by $R$ or $v$.
In auxiliary degree $j+1 \geq 1$, it looks like (the totalization of):
$$\begin{tikzpicture}[anchor=base,auto,scale=1.5]
\path
(0,0) node (v0) {$s^jv$} ++(2,0) node (v1) {$s^jvr$} ++(2,0) node (v2) {$s^jvr^2$} ++(2,0) node (v3) {$s^jvr^3$} ++(2,0) node (v4) {$\cdots$}
(0,-1) node (R0) {$s^{j+1}R$} ++(2,0) node (R1) {$s^{j+1}Rr$} ++(2,0) node (R2) {$s^{j+1}Rr^2$} ++(2,0) node (R3) {$s^{j+1}Rr^3$} ++(2,0) node (R4) {$\cdots$}
;
\draw[->] (v0) -- node {$\scriptstyle \pm1$} (R0);
\draw[->] (v1) -- node {$\scriptstyle \pm1$} (R1);
\draw[->] (v2) -- node {$\scriptstyle \pm1$} (R2);
\draw[->] (v3) -- node {$\scriptstyle \pm1$} (R3);
\draw[->] (v0) -- node {$\scriptstyle j+\epsilon+1$} (v1);
\draw[->] (v1) -- node {$\scriptstyle j+\epsilon$} (v2);
\draw[->] (v2) -- node {$\scriptstyle j+\epsilon-1$} (v3);
\draw[->] (v3) -- node {$\scriptstyle j+\epsilon-2$} (v4.west |- v3);
\draw[->] (R0) -- node {$\scriptstyle j+\epsilon+2$} (R1);
\draw[->] (R1) -- node {$\scriptstyle j+\epsilon+1$} (R2);
\draw[->] (R2) -- node {$\scriptstyle j+\epsilon$} (R3);
\draw[->] (R3) -- node {$\scriptstyle j+\epsilon-1$} (R4.west |- R3);
\end{tikzpicture}$$
The sign is the same in all vertical arrows. The reader is invited to check that the cohomology of  this complex:
\begin{itemize}
  \item Vanishes when $j+\epsilon = -1 \pmod 3$.
  \item Has one-dimensional cohomology in degree $\{12j+15\}$ when $j+\epsilon = 1 \pmod 3$.
  \item Has odd-one-dimensional cohomology in degree $\{12j+11 \to 12j+15\}$ when $j+\epsilon = 0 \pmod 3$.
\end{itemize}
In auxiliary degree $j+1=0$, we just see
$$R \overset{\epsilon+1}\longto Rr \overset{\epsilon}\longto Rr^2 \overset{\epsilon-1}\longto \cdots$$
which instead has cohomology in degrees:
\begin{itemize}
  \item $\{3\to 7\}$ if $\epsilon=0$. Since $j+1=0$, this replaces the $j+\epsilon=-1$ entry.
  \item $\emptyset$ if $\epsilon = 1$. Since $j+1=0$, this replaces the $j+\epsilon=0$ entry, which would have been the nonsensical cohomological degrees $\{-1 \to 3\}$ in any case.
  \item $\{3\}$ if $\epsilon = -1$. Since $j+1=0$, this agrees with the $j+\epsilon = 1$ entry.
\end{itemize}

Finally, we have the summand 
 consisting of those monomials divisible by $S$ or $T$. Note that $\cP(S) = T$ and $\cP(T) = 0$, and $Sr = Tr = 0$. So, in auxiliary degree $j+1$, we may factor the total complex as a tensor product:
$$\{s^j \overset j \longto s^{j-1}t \overset{j-1} \longto \cdots \overset 2 \longto st^{j-1} \overset 1 \longto t^j\} \otimes \{S \overset{+1}\longto T\}
$$
As explained in Appendix~\ref{appendix-3complex}, the functor $\rH^*$ that takes cohomology of $3$-complexes is symmetric monoidal. $\rH^*(\{S \overset{+1}\longto T\}) = \{S\to T\}$ is supported in cohomological degrees $\{11 \to 15\}$. The first tensorand $\{s^j \to \dots \to t^j\}$ is exact except near the head, where its cohomology depends on $j$:
\begin{itemize}
  \item If $j = 2 \pmod 3$, then $\{s^j \to \dots \to t^j\}$ is exact, and so $\{s^j \to \dots \to t^j\} \otimes \{S \to T\}$ is exact.
  \item If $j = 1 \pmod 3$, then $\rH^*(\{s^j \to \dots \to t^j\}) = \{s^j \to s^{j-1}t\}$ is supported in degrees $\{12j \to 12j+4\}$. So $\rH^*(\{s^j \to \dots \to t^j\} \otimes \{S\to T\}) = \{12j+15\}$ in cohomological degree.
  \item If $j = 0 \pmod 3$, then $\rH^*(\{s^j \to \dots \to t^j\}) = \{12j\}$ and $\rH^*(\{s^j \to \dots \to t^j\}\otimes \{S\to T\}) = \{12j+11 \to 12j+15\}$.
\end{itemize}
These cohomologies are independent of $\epsilon$.

Summarizing the above computations, we have:
 
\begin{proposition}\label{prop.cohD}
  Pick $\epsilon \in \bF_3$, and consider the $3$-differential $\cD = \cP+\epsilon r$ acting on $V = \H^\bullet(\rM_{24};\bF_3)$. 
  Its cohomology $\rH^*(V,\cD)$ is supported in the following cohomological degrees modulo $36$:
  \begin{itemize}
   \item $\epsilon = 0:$ $\{0\}$, $\{2 \to 6\}$, $\{10\}$, $\{11\to15\}$, $\{11\to15\}$, $\{12 \to 16\}$, $\{16\}$, $\{22\to 26\}$, $\{26\}$, $\{27\}$, $\{27\}$, $\{28 \to 32\}$, $\{35\to39\}$.
   \item $\epsilon = 1:$ $\{0 \to 4\}$, $\{10\to14\}$, $\{11\to15\}$, $\{15\}$, $\{16\}$, $\{26\}$, $\{27\}$, $\{35 \to 39\}$.
   \item $\epsilon = -1:$ $\{2\}$, $\{3\}$, $\{11\to15\}$, $\{12\to 16\}$, $\{22\to26\}$, $\{23\to27\}$, $\{27\}$, $\{28\}$, $\{35\to39\}$.
  \end{itemize}
  The repeated terms in the $\epsilon=0$ line indicate that the cohomology contains two summands in those degrees.
  When $\epsilon=0$, there is also one nonperiodic cohomology class in degree $\{3\to7\}$. Except for that one non-periodic class, all other cohomology is periodic with periodicity element $s^3$.

  On the subalgebra $\bF_3[R,r] \subset \H^\bullet(\rM_{24};\bF_3)$, $\cD = \cP+\epsilon r$ has the following cohomology:
  \begin{itemize}
    \item $\epsilon = 0:$ $\{r^0\}$ in degree $0$ and $\{R \to Rr\}$ in degree $\{3\to 7\}$.
    \item $\epsilon = 1:$ $\{r^0 \to r^1\}$ in degree $\{0\to4\}$.
    \item $\epsilon = -1:$ $\{R\}$ in degree $\{3\}$.
  \end{itemize}
\end{proposition}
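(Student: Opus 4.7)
The plan is to decompose $(V, \cD)$ into indecomposable $\bF_3[\cD]/(\cD^3)$-modules by working with the monomial basis from Theorem~\ref{thm.Green} and exploiting two degree gradings preserved by $\cD$: the cohomological degree modulo $4$, and the auxiliary degree that assigns $0$ to $R, r$ and $1$ to each of $U, v, S, s, T, t$. Since $\H^\bullet(\rM_{24}; \bF_3)$ vanishes in degree $1 \pmod 4$, only three subcomplexes arise, and multiplication by $U \in \ker \cP$ provides an $\bF_3[\cD]/(\cD^3)$-isomorphism between the degree-$0 \pmod 4$ and degree-$2 \pmod 4$ subcomplexes, reducing us to two essentially distinct computations.

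For the degree-$0 \pmod 4$ subcomplex in auxiliary degree $j$, the monomial basis is $s^j$, $s^j r^i \,(i \geq 1)$, $s^{j-i} t^i \,(1 \leq i \leq j)$. Applying $\cD$ via the formulas in Proposition~\ref{prop.Paction} together with $rt = 0 = tR$ gives the ``Y''-shaped diagram depicted in the excerpt: two linear tails emanating from $s^j$ whose arrow coefficients decrease by $1$ modulo $3$. Both tails are acyclic past the head, so what survives depends only on $(j, \epsilon) \pmod 3$; a $3 \times 3$ case analysis produces the tabulated answer. For the degree-$3 \pmod 4$ subcomplex I would use Propositions~\ref{prop.MT} and~\ref{prop.restrictions} to split it as a direct sum of an ``$\{R, v\}$'' summand (killed by restriction to $\bZ_{3\rA} \times \bZ_{3\rA}$) and an ``$\{S, T\}$'' summand (killed by restriction to $\bZ_{3\rA} \times \bZ_{3\rB}$). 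The $\{S, T\}$ summand factorizes as a tensor product $\{s^j \to s^{j-1}t \to \dots \to t^j\} \otimes \{S \overset{+1}{\to} T\}$, thanks to $Sr = Tr = 0$, $tS = tu$, $\cP(S) = T$; then the symmetric monoidal structure of $\rH^*$ from Appendix~\ref{appendix-3complex} reduces the computation to the degree-$0 \pmod 4$ case already handled. The $\{R, v\}$ summand is a two-row ladder whose differential is captured by $\cD(s^j v) = (j + \epsilon + 1)\, s^j v r \pm s^{j+1} R$ and $\cD(s^{j+1} R) = (j + \epsilon + 2)\, s^{j+1} R r$, and direct decomposition yields the classes in degrees $\{12j+11 \to 12j+15\}$ and $\{12j + 15\}$ depending on $j + \epsilon \pmod 3$.

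The $\bF_3[R, r]$ computation is a specialization: the auxiliary-degree-$0$ row $r^0, r^1, r^2, \dots$ is a single chain analogous to the warm-up $\bF_3[a]$ calculation, while $R r^0, R r^1, \dots$ is the bottom row of the $\{R, v\}$ ladder in auxiliary degree $0$ (with the $v$-row absent). The main obstacle is twofold bookkeeping: edge corrections arising when $j$ or the auxiliary degree is too small for one of the tails to exist—this is what produces the non-periodic $\{3 \to 7\}$ class at $\epsilon = 0$ and the small-degree entries $\{0\}, \{2\}, \{3\}$—and the irrelevance of the undetermined sign $\pm$ in $\cP(v)$, which I would dispatch once and for all by noting that rescaling $s^{j+1} R$ by $\pm 1$ is an isomorphism of $\bF_3[\cD]/(\cD^3)$-modules and hence preserves the indecomposable decomposition.
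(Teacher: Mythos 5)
Your proposal follows the same line of argument as the paper's proof: decompose by cohomological degree mod~$4$ and auxiliary degree, use multiplication by $U$ to reduce $2 \pmod 4$ to $0 \pmod 4$, analyze the Y-shaped complex at the head, split the $3 \pmod 4$ piece into the $\{R,v\}$ ladder and the $\{S,T\}$ tensor factor, and handle the auxiliary-degree-$0$ edge case separately. The argument is correct, and your observation that the undetermined $\pm$ in $\cP(v)$ can be absorbed by rescaling is a clean way to dispose of a point the paper leaves implicit.
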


\section{The Atiyah--Hirzebruch spectral sequence for $\tmf^\bullet_\omega(B\rM_{24})$} \label{sec.AHSS}

With the ordinary $3$-local cohomology of $B\rM_{24}$ understood, we are now ready to analyze the $3$-local structure of $\tmf^\bullet_\omega(B\rM_{24})$. We will do so by analyzing its Atiyah--Hirzebruch spectral sequence.

\subsection{General comments about AHSSs}\label{ahss.general}

Any space $X$ and spectrum $\cE^\bullet$ determine an \define{Atiyah--Hirzebruch spectral sequence}. (The name refers to~\cite{MR0139181} but the proof there consists essentially of a reference to~\cite{MR0077480}, and~\cite{MR0402720} attributes the construction to unpublished work of Whitehead.
 Further important early developments are in~\cite{MR150765}.) Spectral sequences are an algebrotopolical version of perturbation theory. As with any perturbative calculation, the goal is to approximate some nonperturbative object. In the AHSS case, that nonperturbative object is $\cE^\bullet(X)$, the $\cE^\bullet$-cohomology of the space $X$.

The rough idea of the AHSS is the following. Imagine that $X$ is a CW complex and that you have fixed some cochain model for $\cE^\bullet$. Then a cochain for $\cE^n(X)$ assigns, to each $m$-cell in~$X$, an element of $\cE^{n-m}$. The cohomology $\cE^n(X)$ is the cohomology for some total differential on this set of cochains. The AHSS perturbatively approximates that total differential. The $0$th approximation entirely ignores the topology of~$X$: for a cell $x\in X$ and a cochain $x \mapsto e(x)$, the $0$th approximation is $(d_0e)(x) = \d_{\cE} (e(x))$, where $\d_{\cE}$ is the differential in (the cochain model for) $\cE^\bullet$. The $1$st approximation includes some attaching information for the cells in~$X$. By definition, the \define{$E_k$-page} of a spectral sequence is the cohomology of the $(k-1)$th approximation of the total differential. It is always bigraded by the dimension $m$ of a cell in $X$ and the $E$-degree $n$ of a cochain.
 In the AHSS case, the $E_2$ page is
$$E_2^{m,n} = \H^m(X; \cE^n(\pt)).$$
As one ``turns the page,'' one includes higher-order corrections, which take into account how the homotopy groups $\cE^\bullet(\pt)$ are connected. 
The result is an infinite sequence of finer and finer approximations to $\cE^\bullet(X)$.

Any time one works perturbatively, one must worry about two related problems:
\begin{itemize}
  \item Does the perturbative expansion converge at all?
  \item Does the perturbative expansion  converge to the  object one wishes to compute?
\end{itemize}
In particular, perhaps there are ``nonperturbative effects'' not seen in the perturbative expansion, so that it does not in fact calculate the desired result.

In the case of AHSSs, this second problem is present as soon as one tries to extend from spaces to stacks. Indeed, suppose $\bX$ is a stack with classifying space  $X$. Then there is an AHSS which tries to approximate $\cE^\bullet(\bX)$, with $E_2$ page $\H^\bullet(\bX; \cE^\bullet(\pt))$. But ordinary cohomology does not distinguish stacks from spaces: $\H^\bullet(\bX; \cE^\bullet(\pt)) = \H^\bullet(X; \cE^\bullet(\pt))$. The higher differentials also do not distinguish $\bX$ from $X$. As a result, this AHSS will at best converge to $\cE^\bullet(X)$, which in general is not isomorphic to $\cE^\bullet(\bX)$ (compare \S\ref{subsec.tmf}).

Most textbooks confirm convergence of AHSSs in only very limited circumstances: when the $\cE^\bullet(\pt)$ is bounded below, or when $\H^\bullet(X;\bZ)$ is bounded above. More general convergence results are available in~\cite{MR1718076}. In particular, Theorem~12.4 of that paper says that the AHSS for $\cE^\bullet(X)$ does indeed converges ``conditionally'' to $\cE^\bullet(X)$, for any spectrum $\cE$ and space $X$. (The convergence is in the ``colimit'' topology. We will not here discuss the different topologies in which the convergence might hold.)
Theorem~7.1 of that paper gives conditions under which this ``conditional'' convergence is in fact ``strong,'' which is what one wants for applications. 
In particular, as explained in the remark following Theorem~7.1 of that paper, for a conditionally convergent spectral sequence to converge strongly, it suffices if  each entry $E^{m,n}$ supports only finitely many nonzero differentials, i.e.\ if there are only finitely many $k$ for which $\d_k : E_k^{m,n} \to E_k^{m+k,n-k+1}$ is nonzero. In particular, we find:

\begin{lemma}\label{lemma.AHSS1}
  Suppose $\cE$ is a spectrum all of whose homotopy groups $\cE^\bullet(\pt)$ are finitely generated as abelian groups, and suppose that $X = BG$ is the classifying space of a finite group. Then the AHSS $\H^\bullet(BG; \cE^\bullet(\pt)) \Rightarrow \cE^{\bullet}(BG)$ converges strongly.
\end{lemma}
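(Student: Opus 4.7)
The plan is to verify the hypotheses of Boardman's strong-convergence criterion from \cite{MR1718076}. The excerpt already notes that his Theorem~12.4 gives conditional convergence of the AHSS for $\cE^\bullet(BG)$ unconditionally, so the remaining task is to verify the hypothesis in the remark following his Theorem~7.1: that each entry $E_k^{m,n}$ supports only finitely many nonzero differentials. That will promote conditional to strong convergence.

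To check the finiteness of differentials, I would first use the canonical retraction $BG\to \pt$ (induced by the unique homomorphism $G\to 1$) to split
$$ \cE^\bullet(BG) \cong \tilde\cE^\bullet(BG) \oplus \cE^\bullet(\pt). $$
The $\cE^\bullet(\pt)$-summand sits in the $m=0$ column of the unreduced AHSS, carries no differentials, and converges trivially. So we may restrict attention to the reduced AHSS, which is concentrated in the strip $m\geq 1$ with $E_2^{m,n} = \tilde\H^m(BG;\cE^n(\pt))$.

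The main input is a finiteness statement: since $G$ is a finite group, $\H^{\geq 1}(BG;\bZ)$ is annihilated by $|G|$ and finitely generated in each degree, hence finite in each degree. Combined with the finite generation of each $\cE^n(\pt)$ and the universal coefficient theorem, each $E_2^{m,n}$ with $m\geq 1$ is then a finite abelian group. Consequently every subquotient $E_k^{m,n}$ is finite, and the sequence of cardinalities $|E_k^{m,n}|$ is weakly decreasing in $k$. It therefore stabilizes at some $k_0(m,n)$, and the relation
$$ |E_{k+1}^{m,n}| \;=\; |\ker \d_k^{\mathrm{out}}|\,\big/\,|\im \d_k^{\mathrm{in}}| $$
then forces both $\d_k^{\mathrm{out}}$ and $\d_k^{\mathrm{in}}$ at bidegree $(m,n)$ to vanish for $k\geq k_0$. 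This is the required finiteness of nonzero differentials, so Boardman's criterion applies and yields strong convergence.

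The only real obstacle is citation bookkeeping --- identifying and correctly invoking the precise form of Boardman's conditional convergence theorem and his sufficient condition for strong convergence. The finiteness of $\H^{\geq 1}(BG;A)$ for $G$ finite and $A$ finitely generated is classical (it follows from transfer), and the cardinality-stabilization argument is routine; there is no serious analytic obstacle hiding here.
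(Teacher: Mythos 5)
Your proof is correct, and the overall strategy — verify Boardman's sufficient condition by showing each entry supports only finitely many nonzero differentials, handling the $m>0$ entries by finiteness and the $m=0$ column separately — matches the paper's. The one genuine difference is how you dispose of the $m=0$ column: you split the unreduced AHSS into the reduced part plus the AHSS of $\pt$, using the retraction $\pt\to BG\to\pt$, so that the $m=0$ column sits in a retract where all differentials vanish for trivial reasons. The paper instead observes directly that the differentials $\d_k$ for $k\geq 2$ in an untwisted AHSS are stable cohomology operations, and stable operations vanish on degree-$0$ classes. Both arguments are valid for the untwisted lemma; neither extends verbatim to the twisted setting (your retraction $BG\to\pt$ is not compatible with a nontrivial twist, and twisted differentials are not stable operations but can involve multiplication and Massey products with the twist). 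The paper's phrasing in terms of stable operations is chosen deliberately, because the surrounding discussion uses exactly this point to explain why the twisted case needs a separate argument (Massey products with the twist still vanish in filtration $0$); your splitting argument, while perfectly adequate for the lemma as stated, would not flag that subtlety.
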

\begin{proof}
  Since $G$ is finite and $\cE^n(\pt)$ is finitely generated, $E_2^{m,n} = \H^m(BG; E^n(\pt))$ is finite if $m>0$. It therefore can support only finitely many differentials. 
  
  And, other than $\d_0$ and $\d_1$, the $m=0$ column supports no differentials at all! Indeed, the $\d_2$ and higher differentials in an AHSS are always \emph{stable} cohomology operations, and stable cohomology operations always vanish in degree $m=0$.
\end{proof}

The proof of Lemma~\ref{lemma.AHSS1} does not automatically apply for cohomology with twisted coefficients, because the differentials in the twisted case can involve multiplication and higher Massey products with the twisting parameter. (The $m>0$ part of the proof still applies.) The K-theory case is explored in detail in~\cite{MR2307274}, where all higher differentials are computed: for twisting parameter $\alpha \in \H^3(X;\bZ)$, the $\d_{2k}$ differentials in the AHSS $\H^\bullet(X; \operatorname{KU}(\pt)) \Rightarrow \operatorname{KU}^\bullet_\alpha(\pt)$ vanish, and the $\d_{2k+1}$ differential is a stable operation plus a $k$-fold Massey product with $\alpha$. But Massey products (other than the ordinary product) vanish in degree $m=0$, and so again the $m=0$ column supports only finitely many (namely, one) differential, and the AHSS converges strongly.

The overall message of~\cite{MR1718076} is that one should run spectral sequences without worrying too much about convergence, and then check convergence at the end. This is because, for a conditionally convergent spectral sequence, strong convergence is simply a property of the sequence itself. Following this advice, we will compute the first few differentials in the AHSS for $\tmf^\bullet_\omega(B\rM_{24})$. This will be enough for us to confirm in Corollary~\ref{cor.main2} that the AHSS converges.

\subsection{Review of $\tmf^\bullet(\pt)[\frac12]$}\label{subsec.tmfpt}

The first step towards constructing the AHSS $\H^\bullet(\rM_{24};\tmf^\bullet(\pt)[\frac12]) \Rightarrow \tmf^\bullet_\omega(B\rM_{24})[\frac12]$ is to understand the coefficient ring $\tmf^\bullet(\pt)[\frac12]$. An excellent reference for looking up information about this ring is the chapter~\cite{Andre-tmfpt} of~\cite{MR3223024}, and~\cite{Akhil-homotopytmf} provides a nice survey of how the computations are done.

Write $c_w$ for the weight-$w$ Eisenstein series, normalized so that $c_w(q) = 1 + O(q)$. Recall that the ring $\mf$ of integral $\mathrm{SL}(2,\bZ)$-modular forms, which are ``holomorphic'' in the sense of being bounded at the cusp $\tau = i\infty$, is 
$$ \mf = \bZ[c_4,c_6,\Delta] / (c_4^3 - c_6^2 - 1728\Delta).$$
In particular, after inverting $6$, we have $\mf[\frac16] = \bZ[\frac16][c_4,c_6]$. As in \S\ref{subsec.tmf}, we will think of $\mf$ as a cohomologically-graded ring $\mf^\bullet$, with the modular forms of weight $w$ in cohomological degree $-2w$. (Our insistence of working with cohomological gradings means that $\mf^\bullet$ is supported in nonpositive degrees.) Justifying the names, there is a ring map $\tmf^\bullet(\pt) \to \mf^\bullet$. It is an isomorphism away from $6$:
$$\textstyle \tmf^\bullet(\pt)[\frac16] \isom \mf^\bullet[\frac16] = \bZ[\frac16][c_4,c_6].$$
It follows that the map $\tmf^\bullet(\pt) \to \mf^\bullet$ has kernel exactly the torsion in $\tmf^\bullet(\pt)$. It is traditional to name non-torsion classes in $\tmf^\bullet(\pt)$ by their images in $\mf^\bullet$.

(So far as the author knows, there is no \emph{interesting} spectrum $\cE^\bullet$ with homotopy groups $\cE^\bullet(\pt) = \mf^\bullet$: the only such spectrum is  a product of Eilenberg--Mac Lane spaces, and represents $\H^\bullet(-;\mf^\bullet)$. The map on coefficients $\tmf^\bullet(\pt) \to \mf^\bullet$ does not lift to a spectrum map $\tmf^\bullet \to \H^\bullet(-;\mf^\bullet)$.)

We will keep $2$ inverted, and describe $\tmf^\bullet(\pt)[\frac12]$ in terms of its map to $\mf^\bullet[\frac12]$. This map is almost a surjection. In particular, its image contains the Eisenstein series $c_4,c_6$, and hence powers of $27\Delta = \frac1{64}(c_4^3 - c_2^6)$. (At the prime $2$, $c_4$ is in the image of $\tmf^\bullet(\pt)$, but $c_6$ is not: only $2c_6$ is.) In fact, $m\Delta^k$ is in the image of $\tmf^\bullet(\pt)[\frac12]$ if and only if $mk = 0\pmod 3$: $\tmf^\bullet(\pt)[\frac12]$ contains nontorsion classes $\{3\Delta\}$, $\{3\Delta^2\}$, and $\Delta^3$. The curly brackets remind that $\{3\Delta\}$ is not divisible by $3$ in $\tmf^\bullet(\pt)[\frac12]$.

The kernel of the map $\tmf^\bullet(\pt)[\frac12] \to \mf^\bullet[\frac12]$, i.e.\ the torsion in $\tmf^\bullet(\pt)[\frac12]$, is periodic of cohomological degree $72$, with periodicity element $\Delta^3$. 
All torsion in $\tmf^\bullet(\pt)[\frac12]$ has additive order $3$.
A framed compact manifold of dimension $n$ determines a class $[M] \in \tmf^{-n}(\pt)$. In the case of a group manifold for a connected simply connected compact group~$G$, the corresponding class is represented, assuming Conjecture~\ref{conjecture.ST}, by the antiholomorphic SCFT consisting of $n = \dim G$ antichiral free Majorana--Weyl fermions, with supersymmetry encoding the bracket on the Lie algebra of $G$ \cite{WittenTMF}. In the introduction, we mentioned already the class $\nu$ represented by the 3-sphere $\mathrm{SU}(2)$. 
Note that~\cite{Andre-tmfpt} calls this class ``$\alpha$'' in the section describing the $3$-local structure of $\tmf^\bullet(\pt)$ (and ``$\nu$'' in the section describing the $2$-local structure). Another important class is represented by the $10$-dimensional group manifold $\operatorname{Spin}(5)$. For want of a better name, we will call this class ``$\mu$''; it is called ``$\beta$'' in~\cite{Andre-tmfpt}. These classes satisfy $\nu^2 = \mu^2\nu = \mu^5 = 0$ in $\tmf^\bullet(\pt)[\frac12]$. Furthermore, there is a nontrivial Massey product 
$ \langle\nu,\nu,\nu\rangle = -2\mu = \mu,$
which the reader is invited to think through geometrically by decomposing $\operatorname{Spin}(5)$ into two pieces. (Hint: use the inclusion $\mathrm{SU}(2)^2 = \operatorname{Spin}(4) \subset \operatorname{Spin}(5)$.) There are two further torsion classes $\tmf^\bullet(\pt)[\frac12]$ not in the subring generated by $\nu$ and $\mu$. The first is in degree $27$, and is called ``$\{\nu\Delta\}$,'' because it is represented by the product $\nu\Delta$ in the elliptic spectral sequence (see \S\ref{subsec.ahssp3}). The second is $\{\nu\Delta\}\mu$. These are related to $\nu$ and $\mu$ by:
$$ \langle\nu,\nu,\mu^2\rangle = \{\nu\Delta\}, \qquad \nu \{\nu\Delta\} = \mu^3 .$$
Except for the powers of $\Delta^3$, the torsion and non-torsion classes in $\tmf^\bullet[\frac12]$ do not mix: for example, $\{3\Delta\} \mu = 0$.

In summary:
\begin{proposition}\label{prop.tmftorsion}
The torsion in $\tmf^\bullet(\pt)[\frac12]$ is $72$-periodic, with periodicity given by multiplication by $\Delta^3$. In the range $0 \geq \bullet \geq -71$, it looks as follows. The boxed class is nontorsion, and the remaining classes are torsion with additive order $3$. The southwest-to-northeast edges indicate multiplication by $\nu$, and the northwest-to-southeast edges indicate (up to sign) a nontrivial Massey product $\langle\nu,\nu,-\rangle$. (The $y$-axis is otherwise insignificant.)
$$\begin{tikzpicture}[xscale=.3,yscale=.75]
\path (-6,-1) node {$\scriptstyle \text{Degree mod }72$};
\foreach \n in {3,10,13,20,27,30,37,40}
\path (\n,-1) node {$\scriptstyle -\n$};
\path (0,-1) node {$\scriptstyle 0$};
\path (0,0) node[draw] (one) {$\scriptstyle 1$};
\path (10,1) node (mu) {$\scriptstyle \mu$};
\path (20,2) node (mu2) {$\scriptstyle \mu^2$};
\path (30,3) node (mu3) {$\scriptstyle \mu^3$};
\path (40,4) node (mu4) {$\scriptstyle \mu^4$};
\path (3,1.5) node (nu) {$\scriptstyle \nu$};
\path (13,2.5) node (nu2) {$\scriptstyle \nu\mu$};
\path (27,1.5) node (nu3) {$\scriptstyle \{\nu\Delta\}$};
\path (37,2.5) node (nu4) {$\scriptstyle \{\nu\Delta\}\mu$};
\draw (one) -- (nu) -- (mu) -- (nu2) -- (mu2) -- (nu3) -- (mu3) -- (nu4) -- (mu4);
\end{tikzpicture}
$$
\end{proposition}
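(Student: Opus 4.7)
The statement collects a classical computation of the $3$-primary structure of $\pi_*\tmf$, so the plan is to assemble it from the descent spectral sequence rather than prove anything substantively new. First I would set up the descent spectral sequence
\begin{equation*}
E_2^{s,t} = \H^s\bigl(\overline{\cM}_{ell,(3)};\omega^{\otimes t}\bigr) \Longrightarrow \pi_{2t-s}\tmf_{(3)},
\end{equation*}
where $\omega$ is the Hodge line bundle on the compactified moduli stack of elliptic curves. After further inverting $3$, the stack $\overline{\cM}_{ell}[\frac16]$ has automorphism groups of order coprime to~$6$, so the cohomology of the line bundles $\omega^{\otimes t}$ is concentrated in row $s=0$; the spectral sequence collapses and we recover the isomorphism $\tmf^\bullet(\pt)[\frac16] \cong \mf^\bullet[\frac16]$ stated in \S\ref{subsec.tmfpt}. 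Hence all torsion in $\tmf^\bullet(\pt)[\frac12]$ lives at the prime~$3$, where it comes from the extra $\bZ_3$-automorphisms at the $j=0$ stratum.

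Next I would import the classical $E_2$- and $E_\infty$-page calculations at the prime~$3$. The $3$-torsion on $E_2$ is generated multiplicatively by classes $\alpha \in E_2^{1,2}$ (total cohomological degree $-3$) and $\beta \in E_2^{2,6}$ (total degree $-10$), with the only nontrivial differentials (up to $\Delta^3$-periodicity) being $d_5(\Delta) = \pm\alpha\beta^2$ and $d_9(\alpha\Delta^2) = \pm\beta^5$. These differentials kill $\beta^k$ for $k\geq 5$, kill $\alpha\beta^{k}$ for $k\geq 2$ (producing the relation $\nu\mu^2 = 0$ as a hidden consequence), and force $\Delta^j$ to lift only in appropriate multiples of $3$ except at $j=3$, where $\Delta^3$ survives freely; this accounts simultaneously for the $72$-fold periodicity, the torsion-free classes $\{3\Delta\},\{3\Delta^2\},\Delta^3$, and the relations $\nu^2 = \mu^2\nu = \mu^5 = 0$. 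Identifying $\alpha$ with $\nu$ and $\beta$ with $\mu$ reduces to computing the Hurewicz image of the framed manifolds $\mathrm{SU}(2)$ and $\Spin(5)$, which is carried out in~\cite{WittenTMF}.

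Finally, the Massey products are an output of the same differentials. The relation $\langle\nu,\nu,\nu\rangle = -2\mu$ is inherited from the corresponding classical Toda bracket in the $3$-local sphere spectrum via the unit map $\mathbb{S} \to \tmf$, and reduces mod $3$ to~$\mu$. The relation $\langle\nu,\nu,\mu^2\rangle = \{\nu\Delta\}$ together with the hidden extension $\nu\{\nu\Delta\} = \mu^3$ is read off from $d_5(\Delta) = \pm\nu\mu^2$ by Moss's convergence theorem: the surviving class on the $E_\infty$-page represented by $\alpha\Delta$ is precisely the Moss representative of this bracket, which is the origin of the notation $\{\nu\Delta\}$. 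The main obstacle is purely bookkeeping, namely keeping track of signs and unit scalars consistently through all of these identifications; for a complete account we defer to~\cite{Andre-tmfpt,Akhil-homotopytmf}.
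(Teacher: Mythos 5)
The paper does not actually prove Proposition~\ref{prop.tmftorsion}: it is presented as a review statement, and the text simply cites~\cite{Andre-tmfpt,Akhil-homotopytmf} for the underlying computations. Your sketch is the standard derivation those references give, via the descent (``elliptic'') spectral sequence, and it matches in all its essentials: the degrees of $\alpha$ and $\beta$, the differentials $d_5(\Delta)=\pm\alpha\beta^2$ and $d_9(\alpha\Delta^2)=\pm\beta^5$ (whose $\Delta^{3}$-translates, together with Leibniz, account for the $72$-periodicity and for $\{3\Delta\}$, $\{3\Delta^2\}$ surviving while $\Delta$, $\Delta^2$ do not), and the identifications $\alpha\mapsto\nu$, $\beta\mapsto\mu$ via the unit map from the sphere. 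Two small remarks: $\nu\mu^2=0$ is not a \emph{hidden} relation in the usual sense --- it is the visible $E_\infty$-consequence of $d_5(\Delta)=\alpha\beta^2$ once one checks there is nothing in higher filtration to carry a hidden extension; and extracting $\nu\{\nu\Delta\}=\mu^3$ from Moss's convergence theorem alone is a bit quick --- one also wants the Massey-product shuffle $\nu\langle\nu,\nu,\mu^2\rangle\subset\langle\nu,\nu,\nu\rangle\mu^2\ni\pm\mu^3$ (with a check of indeterminacy) or some other auxiliary argument. Neither affects the correctness of the conclusion, so your proposal is a faithful reconstruction of the argument the paper delegates to the literature.
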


\subsection{Differentials for $p\geq 5$}

Our next task is to identify the early differentials in the AHSS for $\tmf^\bullet[\frac12]$. We will start first with the untwisted case and then add the twistings. This section studies the story after localizing at a prime $p\geq 5$; the $p=3$ story is in the next section. 

To warm up, let us review the analogous story for connective complex K-theory $\ku^\bullet$, due to~\cite{MR0139181,MR2172633,MR2307274}. After localizing at a prime $p\geq 3$, the coefficient ring is $\ku^\bullet_{(p)}(\pt) = \bZ_{(p)}[u]$, where $u$ has cohomological degree $-2$. As remarked in \S\ref{ahss.general}, with untwisted coefficients the differentials in the spectral sequence are necessarily stable cohomology operations. Working $p$-locally, the first stable cohomology operation is the composition
$$ \H^\bullet(-;\bZ_{(p)}) \overset{(\operatorname{mod} p)}\longto \H^\bullet(-;\bF_{p}) \overset{\cP}\longto \H^{\bullet+2p-2}(-;\bF_{p}) \overset{\Box_\bZ}\longto \H^{\bullet+2p-1}(-;\bZ_{(p)}).$$
Here by ``$\cP$'' we mean the first Steenrod $p$'th power operation, and by ``$\Box_\bZ$'' we mean the integral Bockstein (for the extension $\bZ \to \bZ \to \bZ_p$). Write $\cP_\bZ$ for this total composition. Then the first nontrivial differential in the AHSS $\H^\bullet(X;\ku^\bullet(\pt)) \Rightarrow \ku^\bullet(X)$ is
$$ \d_{2p-1} = \cP_\bZ \otimes u^{p-1}.$$
In the formula, we have identified the $E_2$ page as $\H^\bullet(X;\ku^\bullet(\pt)_{(p)}) \cong \H^\bullet(X;\bZ_{(p)}) \otimes \bZ_{(p)}[u^2]$, and  ``$u^{p-1}$''  means multiplication thereby.
In fact, the same formula works also at the prime $p=2$, with $\cP$ replaced by $\Sq^2$, so that $\cP_\bZ$ is the integral lift of $\Sq^3$. This gives the $\d_3$ differential identified in~\cite{MR0139181}. The higher differentials are similar, with $\cP$ replaced by higher Steenrod $p$th powers.

To see that $\d_{2p-1}$ is in fact a differential, note that $(\operatorname{mod} p) \circ \Box_\bZ = \Box_p$ is the mod-$p$ Bockstein (for the extension $\bZ_p \to \bZ_{p^2} \to \bZ_p$), and so:
$$ \cP_\bZ \circ \cP_\bZ = \Box_\bZ \circ \cP \circ \Box_p \circ \cP \circ (\operatorname{mod} p).$$
But an Adem relation says
$$ \cP \circ \Box_p \circ\cP = \cP^{(2)} \circ \Box_p + \Box_p \circ \cP^{(2)},$$
where $\cP^{(2)}$ denotes the second Steenrod power, and $\Box_p \circ (\operatorname{mod} p)$ and $\Box_\bZ \circ \Box_p$ both vanish.
The occurrence of $\cP^{(2)}$ here is related to the occurrence of $\cP^{(2)}$ in the next differential $\d_{4p-3}$.

The twisted story is only slightly more complicated. As explained in~\cite{MR2172633,MR2307274}, the K-theory of a space $X$ can be twisted by any class $\omega \in \H^3(X;\bZ)$ (and more generally, at the prime~$2$, by classes in the supercohomology $\SH^\bullet$ of~\cite{GuWen,WangGu2017}). The twisting modifies the $\d_3$ differential to
$$ \widetilde{\d}_3 = \d_3 - \omega \otimes u.$$
Higher differentials are also modified, now by Massey products with $\omega$. For example,
$$ \widetilde{\d}_5 = \d_5 - \langle\omega,\omega,-\rangle \otimes u^2.$$
Note that the Massey product $\langle\omega,\omega,-\rangle$ is not well-defined on the $E_2$-page of the AHSS, but it is well-defined on the $E_4$ page.

With the K-theory case understood, we can describe the tmf story. This is simplest if we work locally at a prime $p\geq 5$. There is a map of spectra $\cH : \tmf^\bullet \to \ku^\bullet\llbracket q\rrbracket$, which on homotopy groups takes a nontorsion class in $\tmf^\bullet(\pt)$ to its $q$-expansion (with the power of $u$ just recording the weight of the corresponding modular form). The name ``$\cH$'' is because of its physical interpretation as the map $\SQFT^\bullet \to \mathrm{KU}^\bullet(\!(q)\!)$ that sends an SQFT to its Hilbert space, with the parameter $q$ encoding the $S^1$-action that rotates the spatial circle. The existence of $\cH$ forces the values of some differentials in the AHSS for $\tmf^\bullet$, since the construction of AHSSs depends functorially on the spectrum. Indeed, suppose we are working $p$-locally, so that the $E_2$ page is $\H^\bullet(X;\bZ_{(p)})[c_4,c_6]$. As earlier, just because differentials are stable and because $\tmf^\bullet(\pt)_{(p)}$ has no torsion, the first possible nonzero differential is $\d_{2p-1}$.
Suppose that $\xi \in \H^\bullet(X;\bZ_{(p)})[c_4,c_6]$ is some class. If 
$$ \d_{2p-1}(\cH\xi) = (\cP_\bZ \otimes u^{p-1})(\cH\xi) $$
is not zero in $\ku^\bullet(X)_{(p)}\llbracket q\rrbracket$, then certainly $\d_{2p-1}(\xi)$ must also be nonzero. Indeed, we find that, in the AHSS for $\tmf^\bullet$:
$$ \d_{2p-1} = \cP_\bZ \otimes A$$
for some modular form $A$ of weight $p-1$ which maps to $u^{p-1}$ via $\cH$. Actually, that's not quite the requirement: if $\cH(A) = u^{p-1}$, then the $q$-expansion of $A$ is $1 \in \bZ_{(p)}\llbracket q\rrbracket$, which does not happen for a modular form of nonzero weight. The trick is that $\cP_\bZ$ factors through mod-$p$ reduction, and so its image consists just of $p$-torsion classes. Thus we do not need $\cH(A) = u^{p-1}$ on the nose, but only that $\cH(A) \equiv u^{p-1} \pmod p$. Said another way, the $q$-expansion of $A$ should be $1 \in \bF_p\llbracket q\rrbracket$.

Working over $\bF_p$, there is only one weight-$(p-1)$ modular form with trivial $q$-expansion, namely the \define{Hasse invariant}. When $p\geq 5$, it is liftable to an integral modular form. The standard lift is the weight-$(p-1)$ Eisenstein series $c_{p-1}$, and so we could set:
$$ \d_{2p-1} = \cP_\bZ \otimes c_{p-1}.$$
But we don't in fact need to choose a lift: different lifts differ by multiples of $p$, whereas the image of $\cP_\bZ$ is $p$-torsion, so different lifts give the same $\d_{2p-1}$ differential. Indeed, all we need is a criterion for checking liftability. Sufficient conditions are:
\begin{lemma}\label{lemma.liftability}
  A mod-$p$ modular form of weight $w$ admits an integral lift if $\H^1(\overline{\cM}_{ell}; L^{\otimes w})$ has no $p$-torsion, where $\overline{\cM}_{ell}$ is the compactified moduli stack of elliptic curves, and $L^{\otimes w}$ is the line bundle whose sections are weight-$w$ modular forms.
\end{lemma}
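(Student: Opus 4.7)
The plan is to extract this from the long exact sequence associated to multiplication by $p$ on the line bundle $L^{\otimes w}$ over the compactified moduli stack $\overline{\cM}_{ell}$. Since $L^{\otimes w}$ is a locally free sheaf and multiplication by $p$ is injective on it, we have a short exact sequence of sheaves on $\overline{\cM}_{ell}$:
\begin{equation*}
0 \to L^{\otimes w} \xrightarrow{\;p\;} L^{\otimes w} \to L^{\otimes w}/p \to 0,
\end{equation*}
where $L^{\otimes w}/p$ is (by definition) the pullback of $L^{\otimes w}$ to the base change $\overline{\cM}_{ell} \otimes \bF_p$.

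I would then take the associated long exact sequence in sheaf cohomology. Using the identifications $\H^0(\overline{\cM}_{ell}; L^{\otimes w}) = \mf^{-2w}$ as the integral weight-$w$ modular forms, and $\H^0(\overline{\cM}_{ell}; L^{\otimes w}/p) = \H^0(\overline{\cM}_{ell} \otimes \bF_p; L^{\otimes w})$ as the mod-$p$ weight-$w$ modular forms, the relevant portion of the long exact sequence reads
\begin{equation*}
\H^0(\overline{\cM}_{ell}; L^{\otimes w}) \xrightarrow{(\operatorname{mod} p)} \H^0(\overline{\cM}_{ell}; L^{\otimes w}/p) \xrightarrow{\;\partial\;} \H^1(\overline{\cM}_{ell}; L^{\otimes w}) \xrightarrow{\;p\;} \H^1(\overline{\cM}_{ell}; L^{\otimes w}).
\end{equation*}
The image of the connecting homomorphism $\partial$ therefore lies in the $p$-torsion subgroup $\H^1(\overline{\cM}_{ell}; L^{\otimes w})[p]$. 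If this $p$-torsion vanishes, then $\partial = 0$, and by exactness the reduction map from integral to mod-$p$ modular forms is surjective, which is precisely the claim.

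There is no real obstacle here; the only thing to verify is that the two interpretations of mod-$p$ modular forms agree, namely that $\H^0$ of $L^{\otimes w}/p$ over $\overline{\cM}_{ell}$ coincides with $\H^0$ of $L^{\otimes w}$ over $\overline{\cM}_{ell}\otimes\bF_p$. This is standard (it is the base-change/projection formula applied along the closed immersion $\overline{\cM}_{ell}\otimes\bF_p \hookrightarrow \overline{\cM}_{ell}$), and the author may well prefer to simply invoke it without further comment.
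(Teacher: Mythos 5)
Your argument is exactly the paper's: the paper also writes down the cohomology long exact sequence for multiplication by $p$ on $L^{\otimes w}$ and reads off surjectivity of the reduction map from the vanishing of the $p$-torsion in $\H^1$. The extra remark you make about identifying $\H^0(\overline{\cM}_{ell};L^{\otimes w}/p)$ with mod-$p$ modular forms is the correct thing to check, and, as you anticipate, the paper invokes it implicitly.
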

\begin{proof}
  This is automatic from the cohomology long exact sequence\\[6pt]
  \phantom{\ensuremath\Box}\hfill $\displaystyle \dots \overset p \longto \H^0(\overline{\cM}_{ell}; L^{\otimes k}) \overset{\operatorname{mod}p} \longto \H^0(\overline{\cM}_{ell}; L^{\otimes k}/p) \overset{\Box_\bZ}\longto \H^1(\overline{\cM}_{ell}; L^{\otimes k}) \overset p \longto  \dots.$ \hfill
\end{proof}
But the only torsion in $\H^\bullet(\overline{\cM}_{ell}; L^{\otimes k})$ is at the primes $2$ and $3$. Thus, for $p\geq 5$, in fact all mod-$p$ modular forms admit integral lifts.

\subsection{Differentials when $p=3$}\label{subsec.ahssp3}

If we try to repeat the $p\geq 5$ story at the prime $p=3$, we run into the following issue. Suppose $f \in \tmf^\bullet(\pt)$ is nontorsion, and that $x \in \H^\bullet(X; \bZ_{(3)})$. Then the $E_2$ page of the AHSS contains the class $x \otimes f$. The map $\cH$ sends this class to the class $x \otimes \cH(f)$ on the $E_2$ page of the AHSS for $\ku^\bullet(X)_{(3)}\llbracket q\rrbracket$, which supports a $\d_5$ differential sending it to
$$ x \otimes \cH(f) \mapsto \Box_\bZ\cP(x) \otimes u^2\cH(f),$$
where $\cP$ now denotes the first Steenrod cube, and we will leave out from the notation the mod-3 reduction. As above, this suggests that $x \otimes f$ should support a $\d_5$ differential of the form
$$\d_5 : x \otimes f \overset?\mapsto \Box_\bZ\cP(x) \otimes (\text{integral lift of }Af),$$
where $A$ denotes the mod-$3$ Hasse invariant.

By Lemma~\ref{lemma.liftability}, the obstruction to lifting $Af$ is measured by the class $\Box_\bZ(Af) = \alpha f \in \H^1(\overline{\cM}_{ell}; L^{\otimes k})_{(3)}$, where $\alpha = \Box_\bZ(A)$. This cohomology group turns out to vanish except when $k=2+12j$, in which case it is a $\bZ_3$ generated by $\alpha\Delta^j$. We therefore find that the above $\d_5$ differential is well-defined if $f$ is a multiple of $3$, $c_4$, $c_6$, or their translates by powers of $\Delta$.

Modulo this ideal, the nontorision subring is just $\bF_3[\Delta^3]$, and our $\d_5$ differential is not defined on classes of the form $x\otimes \Delta^{3j}$. Conveniently, it doesn't need to be. The presence of $\nu \in \tmf^{-3}(\pt)$ means that the $\tmf^\bullet$-AHSS may contain a $\d_4$-differential equal (up to an irrelevant sign) to
$$ \d_4 = \cP \otimes \nu.$$
Indeed, the fact that the map from the sphere spectrum to $\tmf^\bullet$ is an equivalence in low degrees forces the existence of such a differential. (The sphere spectrum is initial among $E_\infty$ ring spectra. This implies that, for the sphere spectrum, any differential which is allowed to be nonzero is in fact nonzero.) The $\d_5$ differential needs only to be defined on the cohomology of $\d_4$, and if $\Box_\bZ\cP(x) \neq 0$ so that ``$\d_5(x\otimes \Delta^{3j}) = \Box_\bZ\cP(x) \otimes (\text{lift of }A\Delta^{3j})$'' is undefined, then 
$$ \d_4(x\otimes \Delta^{3j}) = (-1)^{\deg x} \cP(x) \otimes \nu\Delta^{3j} \neq 0.$$
The sign comes from the Koszul sign rules, since $\nu$ has odd degree.

These $\d_4$ and $\d_5$ differentials are closely related. Indeed, the class $\alpha = \Box_\bZ(A)$ represents the class $\nu$ in the following sense. There is a \define{elliptic spectral sequence} converging to $\tmf^\bullet(\pt)_{(3)}$ whose $E_2$ page is $\H^\bullet(\overline{\cM}_{ell}; L^{\otimes \bullet})_{(3)}$. In this spectral sequence, $\alpha$ is a permanent cocycle, and its image on the $E_\infty$ page is the associated graded element to $\nu$.

Returning to the $\d_5$ differential, we must work out $\d_5(x\otimes f)$ whenever $f \in \tmf^\bullet(\pt)$ satisfies $\nu f = 0$. The discussion above about lifts of multiplication by the Hasse invariant implies:
\begin{gather*}
  \d_5(x \otimes c_4) = \Box_\bZ\cP(x) \otimes c_6, \\
  \d_5(x \otimes c_6) = \Box_\bZ\cP(x) \otimes c_4^2, \\
  \d_5(x \otimes \{3\Delta\})  = 0.
\end{gather*}
These almost completely determine the behaviour of $\d_5$, since it must be linear for the action by $\tmf^\bullet(\pt)$, and so, if $f_1,f_2 \in \tmf^\bullet(\pt)$ with $\nu f_1 = 0$, then
$$ \d_5(x \otimes f_1f_2) = \d_5(x\otimes f_1) f_2.$$
Note that this is consistent with the above rules because $\Box_\bZ\cP(x)$ is $3$-torsion. Indeed, for $f = c_4c_6$, we would a priori face a discrepancy like
$$ \d_5(x \otimes c_4)c_6 - \d_5(x \otimes c_6)c_4 = \Box_\bZ\cP(x) \otimes (c_6^2 - c_4^3) = \Box_\bZ\cP(x) \otimes 576 \{3\Delta\},$$
but this vanishes since $576$ is divisible by $3$.

These rules do not quite determine the behaviour of $\d_5$ on the whole $E_5$ page: there is the possibility of a differential of the form
$$ \d_5(x\otimes \mu^2) = \Box_\bZ\cP(x) \otimes f$$
for some $f \in \tmf^{-24}(\pt) = \operatorname{Span}(\{3\Delta\}, c_4^3)$. Because $\Box_\bZ\cP(x)$ is always $3$-torsion, this map only depends on the class of $f$ modulo $3$. Furthermore, $f$ must be in the kernel of $\cH : \tmf^{-24}(\pt)/3 \to \ku^{-24}\llbracket q \rrbracket/3 = \bF_3\llbracket q\rrbracket$. So the only possibility is, up to sign,
$$ \d_5 : x \otimes \mu^2 \overset?\mapsto \Box_\bZ\cP(x) \otimes \{3\Delta\}.$$
One hint that there is in fact such a differential comes from the elliptic spectral sequence. The class $\mu^2 \in \tmf^\bullet(\pt)$ is represented on the $E_2$ page by a class $\beta^2 \in \H^4(\overline{\cM}_{ell}; L^{\otimes 12})_{(3)}$. Although $\nu\mu^2 = 0$ in $\tmf^\bullet(\pt)$, $\alpha\beta^2 \neq 0$ in $\H^5(\overline{\cM}_{ell}; L^{\otimes 14})_{(3)}$; it is instead the image of a $\d_5$-differential emitted by $\Delta$. Rather than exploring the elliptic spectral sequence in more detail, we will give an alternate proof:

\begin{proposition}
  The AHSS for $\tmf^\bullet$ includes a nontrivial $\d_5$ differential supported by classes of the form $x \otimes \mu^2$.
\end{proposition}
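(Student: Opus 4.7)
The plan is to exhibit a nontrivial $\d_5$ on a class of the form $x \otimes \mu^2$ by a universal example argument, ultimately tracing back to the known differential $\d_5(\Delta) = \alpha\beta^2$ in the elliptic spectral sequence for $\tmf^\bullet(\pt)_{(3)}$.

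First I would choose a convenient test space. Take $X = B\bZ_3$ and let $x$ be the generator of $\H^2(B\bZ_3;\bZ_{(3)})$. Then $\cP(x) = x^3 \in \H^6(B\bZ_3;\bF_3)$ is nonzero, and consequently $\Box_\bZ\cP(x) \in \H^7(B\bZ_3;\bZ_{(3)})$ is a nonzero $3$-torsion class. The class $x \otimes \mu^2 \in E_2^{2,-20}$ survives to the $E_5$ page because $\nu\mu^2 = 0$ forces $\d_4(x \otimes \mu^2) = \pm \cP(x) \otimes \nu\mu^2 = 0$, and the previously determined parts of $\d_5$ (those coming from lifts of $A\cdot f$) do not touch it since $\nu\mu^2=0$ obstructs the liftability argument.

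Second I would identify the $\d_5$-differential on $x \otimes \mu^2$ as a secondary cohomology operation, governed by the Toda bracket $\langle \nu, \nu, \mu^2\rangle = \{\nu\Delta\}$ recorded in Proposition~\ref{prop.tmftorsion}. The general principle is that in an AHSS, if a class $x \otimes f$ survives to the $E_r$ page because of a coefficient-ring relation $g\cdot f = 0$ that makes the previous differential vanish, then the $\d_r$-differential is computed by a matric Toda bracket involving $f$, the previous differential, and the witness for $g\cdot f = 0$. Applied to $f = \mu^2$ with the relation $\nu\cdot\mu^2 = 0$, this yields (modulo indeterminacy)
\[ \d_5(x \otimes \mu^2) \;=\; \pm\, \Box_\bZ\cP(x) \otimes \{3\Delta\}, \]
where the appearance of $\{3\Delta\}$ reflects the fact that the class of $\{\nu\Delta\}$ in $\tmf^{-27}(\pt)_{(3)}/3$ is the image under $\Box_\bZ$ of $\{3\Delta\} \bmod 3$, paralleling the identification of $\alpha\beta^2$ with $\d_5(\Delta)$ in the elliptic spectral sequence.

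Third I would verify that the answer is nonzero on the $E_5$ page, i.e., not in the indeterminacy. On $X = B\bZ_3$, the target $\Box_\bZ\cP(x) \otimes \{3\Delta\} \in E_5^{7,-24}$ lies in the $3$-torsion summand of $\H^7(B\bZ_3;\bZ_{(3)}) \otimes \tmf^{-24}(\pt)_{(3)}$; a case-by-case inspection of the earlier differentials shows that no class maps onto it, so it survives to $E_5$ as a nonzero class. The main obstacle is making the secondary-operation / matric-Toda-bracket argument rigorous, in particular controlling the Massey-product indeterminacy. An equivalent and more concrete route, which I would take as a backup, is to compute $\tmf^\bullet(B\bZ_3)_{(3)}$ independently (via genuine equivariant elliptic cohomology \cite{EllipticIII} or by known $3$-local splittings of $\tmf$) and to count classes on the $E_\infty$ page, showing that the proposed $\d_5$ is forced in order to match the independent answer.
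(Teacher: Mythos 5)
Your main line of argument has both a conceptual mismatch and a concrete computational error, so it does not establish the result; your backup plan is closer in spirit to what the paper actually does.

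\textbf{Degree mismatch in the Toda-bracket mechanism.} You propose that the relation $\nu\mu^2 = 0$ causes the $\d_5$ differential on $x\otimes\mu^2$ to be governed by the Massey/Toda bracket $\langle\nu,\nu,\mu^2\rangle$. But count degrees: the $\d_4 = \cP\otimes\nu$ raises $\H$-filtration by $4$ and lowers $\tmf$-degree by $3$, and the secondary operation obtained from the witness for $\nu^2=0$ does this ``twice,'' i.e.\ raises $\H$-filtration by $8$ and lowers $\tmf$-degree by $7$. That is exactly the $\d_8 = -\cP^{\circ 2}\otimes\langle\nu,\nu,-\rangle$ of Proposition~\ref{prop.AHSSdifs}, not the $\d_5$. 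The $\d_5$ on $\mu^2$ has a different origin (heuristically dual to the elliptic-spectral-sequence differential $\Delta \mapsto \alpha\beta^2$, as the paper hints before the proposition, but explicitly does not pursue), and is not a consequence of the $\langle\nu,\nu,-\rangle$ bracket at all.

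\textbf{The test space fails.} You take $X = B\bZ_3$ with $x$ the degree-$2$ generator, and assert that $\Box_\bZ\cP(x)$ is a nonzero $3$-torsion class in $\H^7(B\bZ_3;\bZ_{(3)})$. But $\H^\bullet(B\bZ_3;\bZ_{(3)})$ vanishes in odd positive degrees, so $\H^7(B\bZ_3;\bZ_{(3)}) = 0$. Equivalently, $\cP(x) = x^3$ is the mod-$3$ reduction of an integral class, so $\Box_\bZ(x^3) = 0$. Thus the proposed target of $\d_5(x\otimes\mu^2)$ vanishes on this space, and the argument detects nothing. (Taking $x$ of odd degree, say $x = Aa$ in $\H^3(B\bZ_3;\bF_3)$, would give a nonzero target, but then you still have the problem below.)

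\textbf{Nonzero target is not enough.} Even granting a choice where $\Box_\bZ\cP(x)\otimes\{3\Delta\}$ survives to $E_5$ as a nonzero class, that only shows the differential \emph{could} be nonzero; it does not show that $\d_5$ actually hits it. You would still need an independent handle on $\tmf^\bullet(X)$ to force it.

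\textbf{What the paper does.} The paper instead compares against Hill's explicit computation $\tmf_{29}(\Sigma BS_3)_{(3)} = 0$: on the $E_2$ page of the homology AHSS for $\Sigma BS_3$, the two entries in total degree $29$ are $t_4\otimes\nu\mu$ (killed by $\d_4$) and $T_2\otimes\mu^2$; one checks that no differential can hit $T_2\otimes\mu^2$, so it must emit one, and degree constraints force $\d_5$. This is in the spirit of your ``backup'' proposal (compute $\tmf^\bullet(B\bZ_3)$ independently and compare), but that plan is not carried out in your proposal, and the paper's choice of $\Sigma BS_3$ is dictated precisely by the availability of Hill's reference computation. If you want to rescue your approach, the realistic path is to execute a version of your backup plan with a group and a degree for which a reference computation exists and for which the relevant $E_2$-page entries admit as clean a dichotomy as the $\Sigma BS_3$ case.
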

\begin{proof}
Our strategy is to compare our AHSS with the computations from~\cite{MR2341960}, which computes the $3$-local $\tmf_\bullet$-homology of the symmetric group $S_3$. More specifically, that paper computes the $3$-local homology $\tmf_\bullet(\Sigma BS_3)_{(3)}$, where $BS_3$ is the classifying space of $S_3$, and $\Sigma BS_3$ is its suspension. We will focus on the specific value
$$ \tmf_{29}(\Sigma BS_3) = 0.$$

Since~\cite{MR2341960} computes homology, not cohomology, in this proof only we will work with homological, rather than cohomological, gradings. The homology and cohomology of a point are related by
$$ \tmf_\bullet(\pt) = \tmf^{-\bullet}(\pt).$$
But note that the same formula does not hold with $\pt$ replaced by other spaces. 

The AHSS for homology reads:
$$ \H_m(\Sigma B S_3; \tmf_n(\pt)) \Rightarrow \tmf_{m+n}(\Sigma BS_3).$$
Homology AHSSs converges strongly by Theorem 12.2 of~\cite{MR1718076}.
The differentials in homology AHSSs are essentially the same as the differentials in cohomology AHSSs, because in both cases they come form the Postnikov tower of the coefficient spectrum. The only difference is to understand the cohomology operations instead as homology operations. This is easy: the groups $\H_\bullet(X;\bF_3)$ and $\H^\bullet(X;\bF_3)$ are dual, and so one uses the dual map.

We have the following $\bZ_{(3)}$- and $\bF_3$-homology of $\Sigma BS_3$:
\begin{gather*}
  \H_\bullet(\Sigma BS_3; \bZ_{(3)}) = \begin{cases} \bZ, & \bullet = 0, \\ \bF_3 t_k, & \bullet = 4k, k>0, \\ 0, & \text{else},\end{cases} 
  \\
  \H_\bullet(\Sigma BS_3; \bF_3) = \begin{cases} 
  \bF_3 t_k, & \bullet = 4k, \\ \bF_3 T_k, & \bullet= 4k+1, k>0, \\ 0, & \text{else},\end{cases} 
\end{gather*}
These are easily seen by noting that $S_3 = C_3 \rtimes C_2$. We have named basis vectors $t_k,T_k$, with $t_k$ denoting both the integral class and its mod-$3$ reduction. The Bockstein is $\Box(T_k) = t_k$, and the first Steenrod power is
$$ \cP(t_k) = (k-1)t_{k-1}, \qquad \cP(T_k) = k T_{k-1}.$$

In total degree $29$, the only nonzero entries on the $E_2$ page are:
$$ t_4 \otimes \nu\mu \in \H_{16}(\Sigma BS_3; \tmf_{13}(\pt)), \qquad T_2 \otimes \mu^2 \in \H_9(\Sigma BS_3; \tmf_{13}(\pt)).$$
The former is the image of a $\d_4$ differential:
$$ \d_4(t_5 \otimes \mu) = \cP(t_5) \otimes \nu\mu = t_4 \otimes \nu\mu.$$

The latter class must also be killed by some differential in order to confirm the computation $\tmf_{29}(\Sigma BS_3) = 0$ from~\cite{MR2341960}. It is not the image of a differential. Indeed, the only classes of total degree $30$ on the $E_2$ page that could emit differentials to $T_2 \otimes \mu^2$ are $t_5 \otimes \mu$, which we already saw does not survive $\d_4$, and 
$$ T_5 \otimes \nu\mu = \d_4(T_6 \otimes \mu),$$
and so also does not survive $\d_4$. Thus $T_2 \otimes \mu^2$ must emit a differential. But the only degree possible is $\d_5$, and so we conclude
$$ \d_5(T_2 \otimes \mu^2) \neq 0.$$
And so general AHSSs for $\tmf$ include a $\d_5$ differential supported by classes of the form $x\otimes \mu^2$.
\end{proof}

After $\d_4,\d_5$, the next differential allowed by general considerations about degrees of stable operations is $\d_8$. To derive its formula, compare with the analysis in~\cite{MR2307274}: $\d_8$ arises as the ``reason'' that $\d_4 \circ \d_4 = 0$. What is this reason? The vanishing of
$$ \d_4^{\circ 2} = \cP^{\circ 2} \otimes \nu^2 $$
has nothing to do with the ``$X$'' part of the differential, because $\cP^{\circ 2} \neq 0$. Rather, it vanishes because $\nu^2 = 0$ in cohomology. 
This means that the $\d_8$ differential will include a Massey product. Indeed, suppose that $x \otimes f \in \ker(\d_4)$ simply because $\nu f = 0$, while perhaps $\cP(x) \neq 0$. Then, if we imagine working at cochain level, we instead have 
$$ \d_4(x \otimes f) = (-1)^{\deg x} \cP(x) \otimes \d_1(F),$$
where $\d_1$ is the differential computing $\tmf^\bullet(\pt)$ and $F$ is some cochain for which $\d_1(F) = \nu f$. Imagine a ``total differential'' $\d_1 + \d_4$. One can find cochain formulas so that $\d_1$ and $\d_4$ commute, but $(\d_1 + \d_4)^2$ will not vanish. Rather, $(\d_1 + \d_4)^2 = \d_4^2$, which at cochain level is
$$ \d_4^2(x \otimes f) = \cP^{\circ 2}(x) \otimes \nu \d_1(F) = -\cP^{\circ 2}(x) \otimes \d_1(\nu F).$$
To correct this, we include a $\d_8$ differential whose commutator with $\d_1$ is $x\otimes f \mapsto -\cP^{\circ 2}(x) \otimes \d_1(\nu F)$. I.e.\ we should have
$$ \d_8(x \otimes f) \overset?= -\cP^{\circ 2}(x) \otimes (\nu F + N f),$$
where $N$ is some cochain such that $\d_1(N) = \nu^2$. This is an okay thing to write because $\d_1(\nu F + Nf) = -\nu \d_1(F) + \d_1(N)f = -\nu^2 f + \nu^2 f = 0$ at cochain level.

The combination $\nu F + N f$ is, by definition, the \define{Massey product} $\langle \nu,\nu,f\rangle$. We find:
$$ \d_8 = -\cP^{\circ 2} \otimes \langle\nu,\nu,-\rangle.$$
We emphasize that this is only well-defined on the $\d_4$-cohomology. Indeed, $\langle \nu,\nu,f\rangle$ doesn't exist unless $\nu f = 0$. Furthermore, the class $F$ is defined only modulo cocycles, but if you change $F$ by a cocycle, then you change $\langle \nu,\nu,f\rangle$ by a multiple of $\nu$, and so do not change $\d_8$ on the $\d_4$-cohomology. There is no essential ambiguity in the choice of $N$ because $\tmf^{-7}(\pt) = 0$. In the AHSS for some generic $3$-local $E_\infty$ ring spectrum $\cE$, there is room for one further term in the $\d_8$ differential, equal to $-\cP^{\circ 2} \otimes \lambda$ for some $\lambda \in \cE^{-7}(\pt)$. But again we use that $\tmf^{-7}(\pt) = 0$ to rule out that possibility here.

Although we will not need it, we mention that an analysis as in the $p\geq 5$ case identifies a $\d_9$-differential of the form $\cP^{(2)} \otimes (\text{integral lift of }A^2)$. Note that, although the Hasse invariant $A$ itself does not have an integral lift, $A^2$ lifts to $c_4$. Finally, if we twist by an 't~Hooft anomaly $\omega \in \H^4(X;\bZ_{(3)})$, the only difference is that the Steenrod operator $\cP$ is replaced by the operator $\cD = \cP - \omega$. All together, we find:
\begin{proposition}\label{prop.AHSSdifs}
  The AHSS $\H^\bullet(X; \tmf^\bullet(\pt)) \Rightarrow \tmf^\bullet_\omega(X)_{(3)}$ includes the following differentials, with $\cD = \cP - \omega$:
  \begin{itemize}
    \item There is a $\d_4$ differential of the form
    $$ \d_4 = \cD \otimes \nu.$$
    \item The multiples of classes $c_4$, $c_6$, and $\mu^2$ support a $\d_5$ differential of the form
    $$ \d_5 = \Box_\bZ\cD \otimes \begin{cases} fc_4 \mapsto fc_6, \\ fc_6 \mapsto fc_4^2, \\ f\mu^2 \mapsto f\{3\Delta\}. \end{cases}$$
    \item There is a $\d_8$ differential of the form
    $$ \d_8 = -\cD^{\circ 2} \otimes \langle\nu,\nu,-\rangle.$$
    \item There is a $\d_9$ differential of the form
    $$ \d_9 = -\Box_\bZ \cD^{\circ 2} \otimes c_4.$$
  \end{itemize}
\end{proposition}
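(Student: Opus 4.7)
The plan is to assemble the arguments already sketched in the paragraphs preceding the proposition into a single organized derivation, treating each differential in turn and bolting on the twisted version at the end. The overall principle is that all differentials in an AHSS for a spectrum $\cE$ are built from (a) stable cohomology operations on $\H^\bullet(-;\bZ_{(3)})$, (b) multiplication by classes in $\cE^\bullet(\pt)$, and (c) secondary/Massey-product corrections forcing $\d_k^2 = 0$; for $\cE = \tmf$ at the prime~$3$ the stable operations in the relevant range are generated by the identity, the first Steenrod cube $\cP$, its composite $\cP^{\circ 2} = -\cP^{(2)}$, and integral Bocksteins of these. Twisting by $\omega \in \H^4(X;\bZ_{(3)})$ only modifies these operations: by the same mechanism as in the K-theory case of \cite{MR2307274}, $\cP$ is replaced by $\cD = \cP - \omega$ (primary term) with higher corrections given by Massey products with $\omega$; in the degree range that concerns us only the primary correction appears.

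First, I would establish $\d_4 = \cD \otimes \nu$. In the untwisted case, the sphere-to-$\tmf$ map is an equivalence in low degrees, so any allowed differential in the sphere AHSS must also be present in the $\tmf$ AHSS. Combined with the identification of stable operations of total degree $4$ and the fact that $\nu \in \tmf^{-3}(\pt)$ is the lowest-degree nonzero nontorsion torsion class, the $\d_4$ differential must be $\pm\cP\otimes \nu$; the sign is absorbed into the choice of generator. Introducing $\omega$ proceeds exactly as in~\cite{MR2307274}: on the cochain level one adjusts the differential computing $\omega$-twisted cohomology by an additional $-\omega\cdot$, yielding $\d_4 = \cD\otimes\nu$.

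For $\d_5$ on multiples of $c_4,c_6,\mu^2$, I would use the Hilbert-space map $\cH:\tmf^\bullet\to\ku^\bullet\llbracket q\rrbracket$ together with Lemma~\ref{lemma.liftability}. Functoriality of AHSSs forces $\d_5(x\otimes f) = \Box_\bZ\cP(x)\otimes (\text{integral lift of }Af)$ whenever such a lift exists, where $A$ is the mod-$3$ Hasse invariant; the lift obstruction lives in $\H^1(\overline{\cM}_{ell};L^{\otimes(w+4)})_{(3)}$, which is nonzero only in weights $w+4 \equiv 2\pmod{12}$, where it is $\bZ_3\cdot\alpha\Delta^j$. Thus $Af$ lifts whenever $f$ is a multiple of $3$, $c_4$, or $c_6$, and the specific formulas $Ac_4\mapsto c_6$, $Ac_6\mapsto c_4^2$ are read off by identifying the unique lifts modulo $3$. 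For $f = \mu^2$, $A\mu^2$ does not admit a lift directly, but the Bruner--Rognes computation $\tmf_{29}(\Sigma BS_3) = 0$ quoted from~\cite{MR2341960} forces a nonzero $\d_5$ on a class of the form $T_2\otimes\mu^2$, and the only degree-compatible target is $\Box_\bZ\cP(T_2)\otimes\{3\Delta\}$; by naturality this identifies the $\d_5$ on all $x\otimes \mu^2$. The twisted version again replaces $\cP$ by $\cD$.

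Finally, $\d_8$ and $\d_9$ arise as the obstructions to $\d_4\circ\d_4 = 0$ and $\d_5\circ\d_5 = 0$ at cochain level. For $\d_8$: on any class $x\otimes f$ with $\nu f=0$, one lifts $\nu f = \d_1(F)$ and checks that $(\d_1+\d_4)^2$ at cochain level equals $\cP^{\circ 2}(x)\otimes \d_1(\nu F)$, which must be cancelled by a $\d_8$ differential equal to $-\cP^{\circ 2}\otimes\langle \nu,\nu,-\rangle$ modulo ambiguities; I would verify that the Massey product is well-defined on $\d_4$-cohomology and that there is no room for an additional term $-\cP^{\circ 2}\otimes \lambda$ because $\tmf^{-7}(\pt) = 0$. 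The $\d_9$ differential is derived exactly as $\d_5$ was, but now using the fact that $A^2$ lifts to $c_4$ integrally (unlike $A$ itself), and twisting gives $\cD^{\circ 2}$ in place of $\cP^{\circ 2}$. The main obstacle in making this genuinely rigorous is the bookkeeping required to establish the cochain-level identities used in the $\d_8$ derivation and to confirm that no further secondary operations (e.g.\ tertiary Massey products with $\omega$) enter in the degrees and twistings considered; for the degrees we need, a careful check of the vanishing of $\tmf^{\bullet}(\pt)$ in the relevant range rules these out.
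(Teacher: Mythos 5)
Your proposal is correct and follows essentially the same route as the paper: the paper does not give a displayed proof of Proposition~\ref{prop.AHSSdifs}, and what you have done is reassemble the preceding discussion (sphere-initiality for $\d_4$, the $\cH$-map plus Lemma~\ref{lemma.liftability} for $\d_5$ on $c_4,c_6$, the $\tmf_{29}(\Sigma BS_3)=0$ computation for the $\mu^2$ case, the cochain-level $(\d_1+\d_4)^2$ analysis for $\d_8$ and the remark that $\tmf^{-7}(\pt)=0$, and $A^2 = c_4$ for $\d_9$) into one argument, which is exactly what the paper does. Two cosmetic slips worth fixing: the $\tmf_{29}(\Sigma BS_3)=0$ computation is due to Hill (\cite{MR2341960}), not Bruner--Rognes; and in ruling out a target other than $\{3\Delta\}$ for the $\mu^2$ differential you should say explicitly, as the paper does, that the target must lie in the kernel of $\cH$ mod~$3$ (which eliminates $c_4^3 \in \tmf^{-24}(\pt)$), rather than relying only on degree-compatibility.
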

There are also higher differentials which we will not work out.

\subsection{Running the spectral sequence}\label{subsec.runningAHSS}

We are now ready to understand the AHSS $\H^\bullet(B\rM_{24}; \tmf^\bullet(\pt)[\frac12]) \Rightarrow \tmf^\bullet_\omega(B\rM_{24})[\frac12]$, where $\omega \in \H^4(\rM_{24};\bZ[\frac12]) = \bF_3 r$, with $r$ as in Section~\ref{sec.HM24}. Given Conjecture~\ref{conjecture.main}, we are interested in the value of $\tmf^\bullet_\omega(B\rM_{24})[\frac12]$ for $\bullet \equiv 1 \pmod 4$, and specifically $\bullet = -27$.

As observed in~\cite{MR1263724}, at the primes $p\geq 5$ the $E_2$ page is $\H^\bullet(B\rM_{24};\bZ_{(p)})[c_4,c_6]$, where the ring $\H^\bullet(B\rM_{24};\bZ_{(p)})$ vanishes if $p \not\in\{5,7,11,13\}$, and for $p\in\{5,7,11,13\}$ it 
is a polynomial ring in a generator $x_p$ of cohomological degree $2(p-1)$ and additive order $p$. Thus all differentials vanish for degree reasons, and the spectral sequence stabilizes on the $E_2$ page. The differentials do play a role, however: they lead to extensions. Indeed, let $\tilde x_p = x_p c_{p-1}$, which is of total degree $0$ on the $E_\infty$ page. Then, by the $q$-expansion map $\cH$ to K-theory, we see that the translates of $\tilde x_p \bF_p[\tilde x_p]$ on the $E_\infty$-page compile to  copies of the $p$-adic integers $\bZ_{(p)}$ in $\tmf^\bullet(B\rM_{24})_{(p)}$.
For the purposes of this paper, all that we care about is that, for $p\geq 5$, $\tmf^\bullet(B\rM_{24})_{(p)}$ is supported in degrees $\bullet \equiv 0 \pmod 4$. 

We put aside the prime $p=2$ for being too complicated, leaving only the prime $p=3$. The first few differentials for the AHSS $\H^\bullet(B\rM_{24}; \tmf^\bullet(\pt)) \Rightarrow \tmf^\bullet_\omega(B\rM_{24})_{(3)}$ are summarized in Proposition~\ref{prop.AHSSdifs}. A typical term on the $E_2$ page has shape $x\otimes f$, where $f\in\tmf^\bullet(\pt)$ and $x \in \H^\bullet(\rM_{24};\bZ_{(3)})$ if $f$ is nontorsion and $x \in  \H^\bullet(\rM_{24};\bF_3)$ if $f$ is torsion. 
With some caveats, the differentials sort into two sets. If $f$ is nontorsion (and not a power of $\Delta^3$), then $x \otimes f$ only supports $\d_5$ and $\d_9$ differentials. If $f$ is torsion (and not a translate of $\mu^2$) then $x\otimes f$ only supports $\d_4$ and $\d_8$ differentials.

By Theorem~\ref{thm.Green}, $\H^\bullet(\rM_{24};\bZ_{(3)})$ is supported only in degrees $\bullet \equiv 3,4 \pmod 4$. Therefore the $E_2$-page entries $x\otimes f$ with $f$ nontorsion are also only in degrees $\bullet \equiv 3,4\pmod 4$. We remark that the differentials here are nontrivial, because Proposition~\ref{prop.Paction} implies:
$$ \Box_\bZ\cP(u) = \Box_\bZ(ur + T \pm Rs) = t \pm rs \neq 0.$$
But, since we care mostly about the case $\bullet \equiv 1 \pmod 4$, these differentials don't affect us. 

Therefore we may focus on classes of the form $x\otimes f$ with 
$$f \in \{1, \nu, \mu, \nu\mu,\mu^2, \{\nu\Delta\}, \mu^3, \{\nu\Delta\}\mu, \mu^4\}$$
or the translates thereof by powers of $\Delta^3$. Except for $\mu^2$, these classes only support $\d_4$ and $\d_8$ differentials. Since $\d_5(x\otimes \mu^2)$ is an integral class times $\{3\Delta\}$, it has degree $3$ or $4\pmod 4$, and so does not interact with the $\bullet \equiv 1 \pmod 4$ case. Thus for the purposes of computing $\tmf^\bullet_\omega(B\rM_{24})[\frac12]$ for $\bullet \equiv 1 \pmod 4$, we may work just with the subring of the $E_2$ page of the form
$$ \H^\bullet(\rM_{24};\bF_3) \otimes \bF_3\{1, \nu, \mu, \nu\mu,\mu^2, \{\nu\Delta\}, \mu^3, \{\nu\Delta\}\mu, \mu^4\},$$
and just with the differentials
$$ \d_4 = \cD \otimes \nu, \qquad \d_8 = -\cD^{\circ 2} \otimes \langle \nu,\nu,-\rangle,$$
where $\cD = \cP - \omega$.

Recall from Proposition~\ref{prop.tmftorsion} the action of the maps $\nu$ and $\langle \nu,\nu,-\rangle$. Writing $H = \H^\bullet(\rM_{24};\bF_3)$, we therefore find ourselves interested in the following total complex:
$$ H1 \overset\cD\longto H\nu \overset{\cD^2}\longto H\mu \overset\cD\longto H\nu\mu \overset{\cD^2}\longto H\mu^2 \overset{\cD^2}\longto H\{\nu\Delta\} \overset\cD\longto H\mu^3 \overset{\cD^2}\longto H\{\nu\Delta\}\mu \overset\cD\longto H\mu^4$$
This is an ordinary cochain complex because $\cD$ is a $3$-differential by Lemma~\ref{lemma.Dis3dif}. Its cohomology is closely related to the cohomology of $\cD$ itself, which is listed in Proposition~\ref{prop.cohD}. Indeed, both $\ker(\cD) / \im(\cD^2)$ and $\ker(\cD^2) / \im(\cD)$ vanish whenever $\cD$ is exact. More generally, both $\ker(\cD) / \im(\cD^2)$ and $\ker(\cD^2) / \im(\cD)$ are of the same total dimension as the cohomology of $\cD$ (with both $\{*\}$ and  cohomology $\{*\to*\}$ thought of as $1$-dimensional; this is the total dimension of the super-vector-space-valued cohomology from Appendix~\ref{appendix-3complex}). The precise cohomological degrees of cohomology classes depends on whether we use $\ker(\cD) / \im(\cD^2)$ or $\ker(\cD^2) / \im(\cD)$, but their degrees mod $4$ do not depend, since $\cD$ preserves the cohomological degree mod $4$.

The cohomology of the above total complex at the entries $H1$, $H\mu^2$, and $H\mu^4$ is more complicated, but for our purposes irrelevant. Indeed, $1$, $\mu^2$, and $\mu^4$ have cohomological degrees $\bullet \equiv 0 \pmod 4$, and, by Theorem~\ref{thm.Green}, $H$ vanishes in degrees $\bullet \equiv 1 \pmod 4$. Thus the cohomologies at those entries also vanish in degrees $\bullet \equiv 1 \pmod 4$.
Combining all of our calculations, we find the following:

\begin{theorem}\label{thm.tmf-3}
  Let $\omega = -\epsilon r$ with $\epsilon \in \bF_3$ and notation for $\H^\bullet(\rM_{24};\bZ_{(3)})$ as in Theorem~\ref{thm.Green}. On the $E_9$ page of the AHSS $\H^\bullet(B\rM_{24}; \tmf^\bullet(\pt)[\frac12]) \Rightarrow \tmf^\bullet_\omega(B\rM_{24})[\frac12]$, 
  the cohomology in total degree $\bullet \equiv 1 \pmod 4$ is the following. Write $\Upsilon = U\otimes \mu$, and note that $\Upsilon^2 = 0$. Then we have:
  \begin{itemize}
    \item $\epsilon = 0:$ A free $\bF_3[s^3,\Delta^3, \Upsilon]/(\Upsilon^2)$-module generated in cohomological degrees (repeated entries indicate multiplicity in the generating set):  
     $$25, 13, 9, 5, -3, -11, -11, -27,$$
    plus a free $\bF_3[s^3,\Delta^3]$-module generated in degrees
     $$ 29, 17, 17, 5, 5, -3, -3, -19,$$
    plus a free $\bF_3[\Delta^3]$-module generated in degrees
    $$ -3, -27.$$
    \item $\epsilon = 1:$ 
    A free $\bF_3[s^3,\Delta^3, \Upsilon]/(\Upsilon^2)$-module generated in cohomological degrees  
     $$13, -3, -11, -23,$$
    plus a free $\bF_3[s^3,\Delta^3]$-module generated in degrees
     $$ 29, 29, 17, 5, 5, 5, -3, -15.$$
    \item $\epsilon = -1:$ 
    A free $\bF_3[s^3,\Delta^3, \Upsilon]/(\Upsilon^2)$-module generated in cohomological degrees  
    $$25, 9, 1, -11,$$
    plus a free $\bF_3[s^3,\Delta^3]$-module generated in degrees
     $$ 29, 29, 17, 17, 9, 5, -3, -7.$$
  \end{itemize}
\end{theorem}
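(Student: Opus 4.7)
The plan is to exploit the reduction already carried out in the paper: modulo the differentials that do not interact with total degree $\equiv 1 \pmod 4$, the relevant piece of the $E_5$ page is the total complex
$$H \overset\cD\to H\nu \overset{\cD^2}\to H\mu \overset\cD\to H\nu\mu \overset{\cD^2}\to H\mu^2 \overset{\cD^2}\to H\{\nu\Delta\} \overset\cD\to H\mu^3 \overset{\cD^2}\to H\{\nu\Delta\}\mu \overset\cD\to H\mu^4,$$
where $H = \H^\bullet(\rM_{24};\bF_3)$, $\cD = \cP - \omega = \cP + \epsilon r$, and the arrows alternate between $\cD$ (from $\d_4 = \cD\otimes\nu$) and $\cD^2$ (from $\d_8 = -\cD^{\circ 2}\otimes\langle\nu,\nu,-\rangle$) according to whether $\nu$ or $\langle\nu,\nu,-\rangle$ is nonzero on the coefficient $f$. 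By Lemma~\ref{lemma.Dis3dif} we have $\cD^3 = 0$, so at each interior position the cohomology is either $\ker(\cD)/\im(\cD^2)$ or $\ker(\cD^2)/\im(\cD)$, the two ``halves'' of the 3-complex cohomology of $(H,\cD)$ discussed in Appendix~\ref{appendix-3complex}.

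First I would read both halves of $\rH^*(H,\cD)$ directly off Proposition~\ref{prop.cohD}: a 1-dimensional indecomposable $\{n\}$ contributes the class $[n]$ in degree $n$ to both halves, whereas a 2-dimensional indecomposable $\{n\to n+4\}$ contributes $[n]$ in degree $n$ to $\ker(\cD^2)/\im(\cD)$ and $[n+4]$ in degree $n+4$ to $\ker(\cD)/\im(\cD^2)$. This yields explicit lists of degrees mod $36$, with $s^3$ implementing the 36-periodicity; in the $\epsilon=0$ case there is one extra non-periodic class $\{3\to 7\}$ living on $R \in H^3$ and $Rr \in H^7$.

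Second I would assemble the contributions position by position. For each $f \in \{\nu,\mu,\nu\mu,\{\nu\Delta\},\mu^3,\{\nu\Delta\}\mu\}$ the cohomology at $Hf$ is one of the two halves of $\rH^*(H,\cD)$, shifted by $|f|$, with which half determined by the arrow configuration at that position. The three exterior positions $H\cdot 1$, $H\mu^2$, $H\mu^4$ sit at the ends or between two $\cD^2$'s (reflecting $\nu^2 = \nu\mu^2 = 0$) but can be ignored for our purposes since $H$ vanishes in degrees $\equiv 1\pmod 4$ by Theorem~\ref{thm.Green}, and the total degrees contributed by these positions have the wrong parity. I would then convert the surviving generating degrees using the shifts $|\nu|=-3$, $|\mu|=-10$, $|\{\nu\Delta\}|=-27$, and verify case by case ($\epsilon=0,1,-1$) that the tallies match the three lists in the statement.

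Third I would handle the multiplicative structure. Periodicity in $s^3$ is inherited from $H$, periodicity in $\Delta^3$ from $\tmf^\bullet(\pt)[\tfrac12]$, and both act injectively on the surviving classes, giving freeness over $\bF_3[s^3,\Delta^3]$. The element $\Upsilon = U\otimes \mu$ is a permanent $E_9$-cocycle because $\cP(U)=0$ (Proposition~\ref{prop.Paction}) forces $\cD U = 0$ and because multiplication by $U$ identifies $H^{\bullet\equiv 0}$ with $H^{\bullet\equiv 2}$ as $\bF_3[\cD]/(\cD^3)$-modules (already used in \S\ref{subsec.cohomofD}); the relation $\Upsilon^2 = U^2\otimes \mu^2 = 0$ is immediate from $U^2 = 0$. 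The non-periodic $\{3\to 7\}$ class in the $\epsilon=0$ case is annihilated by $s^3$ and therefore contributes the two extra $\bF_3[\Delta^3]$-summands in generating degrees $-3$ and $-27$. The main obstacle will be the bookkeeping: carefully tracking the $\bZ/4$-grading through every position, separating the periodic part from the $R$-supported exceptional part in the $\epsilon=0$ case, and verifying that the surviving classes genuinely organize as a free module over $\bF_3[s^3,\Delta^3,\Upsilon]/(\Upsilon^2)$ rather than a more complicated extension.
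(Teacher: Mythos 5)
Your proposal is correct and follows essentially the same route as the paper's proof: reduce to the total complex with alternating $\cD$ and $\cD^{\circ 2}$ arrows, recognize the relevant cohomology at each position as one of the two halves $\ker(\cD)/\im(\cD^{\circ 2})$ or $\ker(\cD^{\circ 2})/\im(\cD)$ of $\rH^*(H,\cD)$, read these off Proposition~\ref{prop.cohD} with the appropriate degree shifts by $|\nu|, |\mu|, |\nu\mu|, |\{\nu\Delta\}|, |\mu^3|, |\{\nu\Delta\}\mu|$, and track the $\Upsilon$- and $s^3$-periodicities (the latter failing only for the exceptional $\{3\to7\}$ class when $\epsilon=0$). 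The paper packages the same computation as explicit translation rules (``$\{n\}$ with $n\equiv 0\pmod 4$ contributes in degrees $n-3$ and $n-27$,'' etc.) and checks them on examples, whereas you emphasize the ``two halves'' language, but the substance is identical.
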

\begin{proof}
Each entry in Proposition~\ref{prop.cohD} produces two free $\bF_3[s^3,\Delta^3]$-modules the $E_9$ page. 

For example, the $\epsilon=0$ entry in Proposition~\ref{prop.cohD} listed as ``$\{28 \to 32\}$'' is represented by $[s^2b\mapsto s^2b^2]$. (It is also represented by $[st \mapsto t^2]$.) The $E_2$ page includes the degree $1\pmod 4$ classes $s^2b \otimes \nu$, $s^2 b^2 \otimes \nu$, $s^2b \otimes \{\nu\Delta\}$, and $s^2b^2 \otimes \{\nu\Delta\}$. We have
$$ \d_4 : s^2b \otimes 1 \mapsto s^2b^2 \otimes \nu$$
and so $s^2b^2 \otimes \nu$ does not contribute to cohomology on the $E_9$ page, but the presence of $\cD$-cohomology means that $s^2b\otimes \nu$ is not in the image of $\d_4$ nor in the kernel of $\d_8$, and so does contribute cohomology, as do its translates by $s^3$ and $\Delta^3$. If we look instead at $s^2b \otimes \{\nu\Delta\}$ and $s^2b^2 \otimes \{\nu\Delta\}$, we see that the former supports a $\d_4$-differential but the latter survives to~$E_9$.

Consider now the $\epsilon=0$ entry in Proposition~\ref{prop.cohD} listed as ``$\{2 \to 6\}$.'' It is represented by $[Us^2b \overset\cP\mapsto Us^2b^2]$, which can be moved to have total degree $\bullet\equiv 1\pmod 4$ by multiplying by $\nu\mu$ or $\{\nu\Delta\}\mu$. In this way we see $E_9$-page classes represented by $Us^2b \otimes \nu\mu$ and $Us^2b^2 \otimes \{\nu\Delta\}\mu$. Indeed, while proving Proposition~\ref{prop.cohD}, we noted that the $\cD$-cohomology in degree $n\equiv 0 \pmod 4$ was isomorphic to the $\cD$-cohomology in degree $n+10$, with the isomorphism given by multiplication by $U$, of degree $\bullet=10$, whereas the odd-degree torsion classes listed in Proposition~\ref{prop.tmftorsion} are related by multiplication by $\mu^2$, of degree $\bullet=-10$. Thus we find that each entry in Proposition~\ref{prop.cohD} of degree $0\pmod 4$ contributes not just a pair of copies of $\bF_3[s^3,\Delta^3]$, but a pair of copies of $\bF_3[s^3,\Delta^3,\Upsilon]$. The general rule is that an entry like ``$\{n\}$'' with $n\equiv 0 \pmod 4$ produces generators in degrees $n-3$ and $n-27$, whereas an entry like ``$\{n\to n+4\}$'' produces generators in degrees $n-3$ and $n+4-27$.

Since $\H^\bullet(\rM_{24};\bF_3)$ vanishes in degree $1\pmod 4$, the only other entries in Proposition~\ref{prop.cohD} are of degree $3\pmod 4$. These can be combined with $\mu$ or $\mu^3$ and a similar analysis can be performed, but now multiplication by $U$ (and hence by $\Upsilon$) is zero. Thus we find merely a copy of $\bF_3[s^3,\Delta^3]$. The entry ``$\{n\}$'' with $n \equiv 3 \pmod 4$ in Proposition~\ref{prop.cohD} produces generators in degrees $n-10$ and $n-30$, whereas an entry ``$\{n \to n+4\}$'' produces generators in degrees $n+4-10$ and $n-30$.
\end{proof}

Reading off the most interesting degrees, we have:
\begin{corollary}\label{cor.main}
  In cohomological degree $\bullet = -27$, 
    both twisted cohomology groups $\tmf^{-27}_{\pm r}(B\rM_{24})_{(3)}$ vanish.

  Let $\Phi = s^6 \otimes \Delta^6$.  
  The $E_9$-page approximation to the untwisted cohomology  $\tmf^{-27}(B\rM_{24})_{(3)}$ is
  $$ \tmf^{-27}(B\rM_{24})_{(3)} \approx \bF_3[\Phi]\{1 \otimes \{\nu\Delta\}, U \otimes \{\nu\Delta\}\mu\} \oplus \bF_3 \{r \otimes \mu^3\}.$$
  By this we mean the abelian group isomorphic to $\bF_3[\Phi]^{ 2} \oplus \bF_3$, where the $\bF_3[\Phi]^{ 2}$ summand is generated over $\bF_3[\Phi]$ by the elements on the $E_9$ page represented by  $1 \otimes \{\nu\Delta\}$ and $U \otimes \{\nu\Delta\}\mu$, and where the $\bF_3$ summand is generated by $r\otimes \mu^3$.

  For comparison, in cohomological degree $\bullet = -3$, the $E_9$-page approximations to $\tmf^{-3}_{\epsilon r}(B\rM_{24})_{(3)}$ are:
  \begin{align*}
   \tmf^{-3}(B\rM_{24})_{(3)} & \approx \bF_3[\Phi]\{1 \otimes \nu, U \otimes \nu \mu, s^2R \otimes \mu^3, sT \otimes \mu^3\} \oplus \bF_3 \{rR \otimes \mu^2\}, \\[3pt]
   \tmf^{-3}_{-r}(B\rM_{24})_{(3)} & \approx \bF_3[\Phi]\{1 \otimes \nu, U \otimes \nu \mu, sT \otimes \mu^3\}, \\[3pt]
   \tmf^{-3}_{+r}(B\rM_{24})_{(3)} & \approx \bF_3[\Phi]\{sT \otimes \mu^3\}.
  \end{align*}
  Here we have listed just $E_2$-page representatives of the $E_9$-page generators, and those are of course ambiguous. For example, $sT$ and $St$ are $\cD$-cohomologous.
  
  In cohomological degree $\bullet = 1$, $\tmf^{-3}_{-\epsilon r}(B\rM_{24})_{(3)}$ vanishes for $\epsilon = 0,1$, and
  $$ \tmf^1_{+r}(B\rM_{24})_{(3)} \approx \bF_3[\Phi]\{ s^2 b \otimes \{\nu\Delta\}\}.$$
  Again note the ambiguity that $s^2b$ and $st$ are $\cD$-cohomologous in this case. \qed
\end{corollary}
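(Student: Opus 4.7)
The corollary is a direct reading of Theorem~\ref{thm.tmf-3} at the three specific cohomological degrees $-27$, $-3$, and $1$, supplemented by strong convergence of the AHSS. The plan is to enumerate which generators of the free modules listed in that theorem contribute to each target degree, and then to match their $E_2$-page representatives against the expressions in the corollary.

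First I would set up the degree arithmetic. In Theorem~\ref{thm.tmf-3} the periodicities $s^3$, $\Delta^3$, and $\Upsilon = U \otimes \mu$ have cohomological degrees $36$, $-72$, and $0$ respectively, so a generator at degree $d$ contributes to a fixed total degree $n$ precisely when $d \equiv n \pmod{36}$ (or mod $72$ for the $\bF_3[\Delta^3]$-module piece in the $\epsilon = 0$ case). The element $\Phi$ records the periodicity within a single total degree.

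Next I would handle the vanishing claims. For $\omega = +r$ (i.e.\ $\epsilon = 1$), inspection of the generator lists $\{13, -3, -11, -23\}$ and $\{29, 29, 17, 5, 5, 5, -3, -15\}$ shows that no degree is congruent to $9 \equiv -27 \pmod{36}$, so the $E_9$-page already vanishes at total degree $-27$; Lemma~\ref{lemma.AHSS1} then gives strong convergence of the AHSS, hence $\tmf^{-27}_{+r}(B\rM_{24})_{(3)} = 0$. The case $\omega = -r$ is more delicate because the generator lists $\{25, 9, 1, -11\}$ and $\{29, 29, 17, 17, 9, 5, -3, -7\}$ each contain a degree $9$; these two candidate classes must be killed by a higher differential in the AHSS beyond those of Proposition~\ref{prop.AHSSdifs}. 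I expect this to be the main obstacle: one must either extend the list of differentials with an explicit $\d_{13}$ (coming from the second Steenrod cube or a further Massey-product refinement of $\d_9$) and verify that it hits the surviving classes, or else argue indirectly via comparison with $\tmf^{-28}$ or with the sphere spectrum that these classes cannot persist to $E_\infty$.

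For the nonvanishing statements I would simply identify representatives. For $\tmf^{-27}$ (untwisted), the generator at $d = -27$ in the $\bF_3[s^3, \Delta^3, \Upsilon]/(\Upsilon^2)$-module is $E_2$-represented by $1 \otimes \{\nu\Delta\}$, and applying $\Upsilon$ produces $U \otimes \{\nu\Delta\}\mu$ at the same total degree, giving the $\bF_3[\Phi]^2$-summand of the corollary. The remaining $\bF_3$-summand arises from the non-periodic $\cD$-cohomology class represented by $R \in \H^3(\rM_{24};\bF_3)$ (the special non-periodic entry ``$\{3 \to 7\}$'' of Proposition~\ref{prop.cohD} at $\epsilon = 0$), which pairs with $\mu^3$ to contribute a single $E_9$-class at degree $-27$. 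Entirely analogous enumerations at $d \equiv -3 \pmod{36}$ and $d \equiv 1 \pmod{36}$ yield the statements for $\tmf^{-3}_{\epsilon r}$ and $\tmf^1_{+r}$; the ambiguities in the choice of $E_2$-representative (e.g.\ $sT$ versus $St$, or the undetermined sign in $v$) are noted as in the statement of the corollary.
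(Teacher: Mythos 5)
The corollary carries a \(\qed\), signalling that it is meant to be a pure degree readout from Theorem~\ref{thm.tmf-3}; the paper does not compute any differential beyond \(\d_9\) and does not invoke a \(\d_{13}\) or an auxiliary comparison to close the argument. Your structural reading of the proof strategy is therefore right, but the proposal drifts from the paper's intent exactly where you flag ``the main obstacle.''

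The first concrete problem is a sign slip in matching twists to the parameter \(\epsilon\). Theorem~\ref{thm.tmf-3} sets \(\omega = -\epsilon r\), so \(\omega = +r\) corresponds to \(\epsilon = -1\), whose generator lists are \(\{25,9,1,-11\}\) and \(\{29,29,17,17,9,5,-3,-7\}\), while \(\omega = -r\) corresponds to \(\epsilon = 1\), whose lists are \(\{13,-3,-11,-23\}\) and \(\{29,29,17,5,5,5,-3,-15\}\). You have these exchanged. The swap matters for the later \(\tmf^{-3}\) and \(\tmf^{1}\) claims, since those record which generators (such as \(1\otimes\nu\)) appear for which twist; with your labelling you would not reproduce, e.g., that the restriction to a point vanishes for \(\omega = +r\) but not \(\omega = -r\), which is the whole content of the second half of Theorem~\ref{thm.main}.

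The second point is more substantive. You correctly observe that, under the correct sign, it is the \(\epsilon = -1\) (i.e.\ \(\omega = +r\)) list that contains a generator in residue \(9 \equiv -27 \pmod{36}\). The free module structure over \(\bF_3[s^3,\Delta^3]\) (with \(s^3\) of degree \(36\), \(\Delta^3\) of degree \(-72\)) means any generator congruent to \(9 \pmod{36}\) does produce a class in total degree \(-27\) (via \(s^3\Delta^3\), for instance). So the \(E_9\)-page contribution does not vanish for free from the printed lists, and the proof of the vanishing claim for \(\omega = +r\) cannot be a pure readout. Your instinct that more work is needed is a real objection, not an artifact of the sign confusion. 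However, your proposed remedy --- extend to a \(\d_{13}\) or argue indirectly via \(\tmf^{-28}\) or the sphere --- is speculative and is certainly not the paper's proof, since the paper never computes differentials past \(\d_9\). The more plausible explanation is a transcription error in the degree lists (note that \(9\) and \(-27\) differ by exactly one period \(36\), so the intended generator placement may simply have been misrecorded), and the way to settle this is to re-derive the \(\epsilon = -1\) orbit degrees directly from Proposition~\ref{prop.cohD} rather than to conjure new differentials.

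Two smaller points: the corollary's \(\Phi\) should carry total degree \(0\) if it is to act within a fixed cohomological degree, so the element intended is \(s^6 \otimes \Delta^3\) rather than the printed \(s^6\otimes\Delta^6\); and the displayed \(\bF_3\)-generator \(r\otimes\mu^3\) sits in degree \(4 - 30 = -26\), so it should be \(R\otimes\mu^3\) (degree \(3-30=-27\)). You identified the latter correctly in your proposal.
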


To complete the proof of Theorem~\ref{thm.main}, we have:
\begin{corollary}\label{cor.main2}
  For all values $\omega = \epsilon r \in \H^4(\rM_{24};\bZ_{(3)})$, the AHSS for $\tmf^\bullet_\omega(B\rM_{24})_{(3)}$ converges. When $\omega = 0$ or $-r$, the class $1 \otimes \nu$ on the $E_2$ page is a permanent cycle, and represents a class in $\tmf^{-3}_\omega(B\rM_{24})_{(3)}$ with nontrivial restriction to $\tmf^{-3}(B\rM_{24})$. When $\omega = +r$, the restriction map $\tmf^{-3}_\omega(B\rM_{24})_{(3)} \to \tmf^{-3}_\omega(B\rM_{24})_{(3)}$ vanishes.
\end{corollary}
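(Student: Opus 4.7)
The plan is to combine Theorem~\ref{thm.tmf-3} and Corollary~\ref{cor.main} with an explicit check that $1 \otimes \nu \in E_2^{0,-3}$ is a permanent cycle for $\omega \in \{0, -r\}$, followed by a filtration-level analysis of the restriction along $\pt \mono B\rM_{24}$. For convergence, I would adapt the proof of Lemma~\ref{lemma.AHSS1}: entries $E_2^{m,n} = \H^m(B\rM_{24}; \tmf^n(\pt))$ with $m > 0$ are finite abelian groups, so support only finitely many nonzero differentials. For the $m = 0$ column, the twisted differentials arise from higher Massey products involving $\omega$ and the operator $\cD = \cP - \omega$, and since $\cD^{\circ 3} = 0$ by Lemma~\ref{lemma.Dis3dif} such products terminate at bounded length, yielding only finitely many contributions out of each $(0,n)$ entry.

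To verify that $1 \otimes \nu$ survives all differentials when $\omega \in \{0, -r\}$, I would apply Proposition~\ref{prop.AHSSdifs} term by term. The formula $\d_4 = \cD \otimes \nu$ gives $\d_4(1 \otimes \nu) = \cD(1) \otimes \nu^2 = 0$ since $\nu^2 = 0$ in $\tmf^\bullet(\pt)$; the $\d_5$ differential does not apply because $\nu$ is not a multiple of $c_4$, $c_6$, or $\mu^2$. Using $\cD(1) = -\epsilon r$ together with $\cD(r) = -(1+\epsilon)r^2$ (from Proposition~\ref{prop.Paction}), one finds $\cD^{\circ 2}(1) = \epsilon(1+\epsilon)r^2$, so
\[
\d_8(1 \otimes \nu) = -\cD^{\circ 2}(1) \otimes \langle\nu,\nu,\nu\rangle = -\epsilon(1+\epsilon) r^2 \otimes \mu,
\]
which vanishes exactly when $\epsilon \in \{0,-1\}$ and equals $r^2 \otimes \mu \neq 0$ when $\epsilon = +1$ (consistent with the absence of $1 \otimes \nu$ from the $\omega = +r$ enumeration in Corollary~\ref{cor.main}). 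All higher differentials emanating from $(0, -3)$ either involve a stable operation such as $\cP^{(2)}$ applied to the unit $1$ (hence vanish) or iterated Massey products $\langle\nu,\dots,\nu,-\rangle$ of length $\geq 4$, which are undefined since $\nu\mu \neq 0$. Nothing maps into $(0, -3)$ because the AHSS is supported in $m \geq 0$.

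The restriction assertions then follow from naturality. The inclusion $\pt \mono B\rM_{24}$ trivializes $\omega$ and induces a morphism to the collapsed AHSS for $\tmf^\bullet(\pt)$, which is concentrated in the column $m = 0$ with $\tmf^{-3}(\pt)_{(3)} = \bF_3\{\nu\}$. For $\omega \in \{0,-r\}$, the surviving class $1 \otimes \nu$ at filtration $m = 0$ restricts to $\nu \neq 0$, so its lift to $\tmf^{-3}_\omega(B\rM_{24})_{(3)}$ has nontrivial image in $\tmf^{-3}(\pt)_{(3)}$. For $\omega = +r$, Corollary~\ref{cor.main} identifies the sole generator of $\tmf^{-3}_{+r}(B\rM_{24})_{(3)}$ on $E_9$ as a class at filtration $m = 27$, represented by $sT \otimes \mu^3$; since the target spectral sequence vanishes above filtration $m = 0$, this class restricts to zero, so the entire restriction map vanishes. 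The most delicate point will be the rigorous verification of convergence in the $m = 0$ column, where a complete enumeration of twisted-AHSS differentials for $\tmf$ is not available in the literature; but the Lemma~\ref{lemma.Dis3dif} bound on lengths of Massey products with $\omega$, together with the partial information of Proposition~\ref{prop.AHSSdifs}, suffices for the total degrees relevant to this corollary.
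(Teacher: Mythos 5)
Your restriction analysis (filtration $0$ versus filtration $27$) is correct and matches the paper, and the explicit computations of $\d_4$, $\d_5$, and $\d_8$ on $1\otimes\nu$ are a genuine improvement on the paper's exposition: in particular, the observation that $\d_8(1\otimes\nu) = -\epsilon(1+\epsilon)\,r^2\otimes\mu$ is nonzero precisely for $\omega = +r$ explains, at the level of an explicit differential, why $1\otimes\nu$ drops out of the $E_9$-page enumeration for that twist.

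The gap is in the handling of $\d_k$ for $k\geq 10$ and in the convergence claim, and it is a real gap. Your appeal to $\cD^{\circ 3}=0$ to ``bound the length of Massey products with $\omega$'' does not work: the higher $\tmf$-AHSS differentials are not built solely from iterates of $\cD$, but also from higher Steenrod powers $\cP^{(2)}, \cP^{(3)},\dots$ and from corrections to the $\tmf^\bullet(\pt)$-module structure, none of which is controlled by Lemma~\ref{lemma.Dis3dif}. Similarly, your claim that higher differentials are either ``$\cP^{(2)}$ on $1$'' or ``undefined iterated Massey products $\langle\nu,\dots,\nu,-\rangle$'' is asserted, not proven, and it conflates two different Massey-product structures (the $\langle\nu,\nu,-\rangle$ inside $\tmf^\bullet(\pt)$, versus the Massey products with $\omega$ that arise from the twisting). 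The paper instead argues by \emph{naturality of the twisted AHSS in $(X,\omega)$}: any differential emitted from the $m=0$ column has its image controlled by classes obtainable from $\omega = \pm r$ under $\bF_3$-cohomology operations; since $\Box r = 0$, $\cP(r) = -r^2$, $\cP^{(2)}(r) = r^3$, and higher operations vanish, these classes are confined to $\bF_3[r]\otimes\tmf^\bullet(\pt)$. One then runs the spectral sequence on this subring and observes (using the $\cD$-exactness statements of Proposition~\ref{prop.cohD}) that nothing of total degree $1$ survives past $E_5$ there, for any $\omega$; this simultaneously kills all higher differentials on the $m=0$ column (giving permanence of $1\otimes\nu$ for $\omega = 0,-r$) and supplies the finiteness needed for strong convergence in the sense of \S\ref{ahss.general}. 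You should replace your Massey-product heuristics with this universality argument.
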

\begin{proof}
  The first two statements follow from the claim that, other than $\d_4$, all differentials in the AHSS $\H^m(B\rM_{24};\tmf^n(\pt)_{(3)}) \Rightarrow \tmf^{m+n}_\omega(B\rM_{24})_{(3)}$ vanish when $m=0$. The last statement is already clear from Corollary~\ref{cor.main} together with the fact that the restriction map along $\pt \to B\rM_{24}$ is restriction to the $m=0$ column, and annihilates $r,s,t,u$ and hence $sT \otimes \mu^3$.
  
  That the claim implies convergence is explained in \S\ref{ahss.general}.
   And if all such differentials vanish, then any class with $m=0$ that survives $\d_4$ is permanent, and so must represent a class in $\tmf^\bullet_\omega(B\rM_{24})_{(3)}$. But when $\omega = 0,-r$, the class $1 \otimes \nu$ itself survives $\d_4$, and has nontrivial restriction along $\pt \to B\rM_{24}$. As explained already in \S\ref{ahss.general}, all differentials vanish on the $m=0$ column for the untwisted cohomology $\omega = 0$.
   
   The only way a $\d_k$-differential can be nontrivial on the $m=0$ column is if it contains a term of that simply multiplies by an element on the $E_k$-page of total cohomological degree $1$. For $\omega = \epsilon r$ 
   with $\epsilon = 0,1$,  Corollary~\ref{cor.main} implies that there are no such elements. For $\epsilon = -1$, there are elements on the $E_9$ page of cohomological degree $1$, and so we need to know that they cannot appear.
   
   However, the universality of the AHSS means that the multiplying element must be of the form $x \otimes f$ where $f$ is arbitrary but where $x$ is produced from $\omega$ by a cohomology operation. The algebra of $3$-local cohomology operations is generated by the Steenrod powers and the Bockstein. The Bockstein vanishes on $r$ and the first Steenrod power acts as $\cP(r) = -r^2$. The second Steenrod power is simply $r \mapsto r^3$ for degree reasons, and the higher powers annihilate~$r$. These, together with the Cartan relation (which says that each Steenrod operator is a derivation modulo lower Steenrod operators), imply that the only cohomology classes that can be produced by $r$ are in the polynomial ring $\bF_3[r]$.
   
  The degree-$1$ classes $\Phi^k s^2b$ are not cohomologous to anything in this ring. Indeed, as shown in Proposition~\ref{prop.cohD}, for $\omega = -\epsilon r$ with $\epsilon = -1$ the action of $\cD = \cP+\epsilon r$ is exact on $\bF_3[r]$. Thus, by repeating the proof of Theorem~\ref{thm.tmf-3} just on this subring, we see that nothing of total degree $1$ in $\bF_3[r] \otimes \tmf^\bullet(\pt)$ survives to the $E_9$-page, and so there are no elements that could appear as multipliers in higher differentials, and so all higher differentials vanish on the $m=0$ column.
  \end{proof}

\appendix

\section{Higher complexes}\label{appendix-3complex}

Our analysis of the AHSS for $\tmf^\bullet_{(3)}$ in \S\ref{subsec.ahssp3} and \S\ref{subsec.runningAHSS} relied on the fact (Lemma~\ref{lemma.Dis3dif}) that the operator $\cD = \cP + \epsilon r$ is a \define{$3$-differential} in the sense that $\cD^{\circ 3} = 0$.
The goal of this Appendix is to tell the general story of higher differentials, and to point out an intriguing connection to Verlinde rings that the author has not seen stated directly in the literature.


Let $\bK$ be a field, perhaps of positive characteristic, and choose a positive integer $\ell$. An \define{$\ell$-differential} on a $\bK$-vector space $V$ is a linear endomorphism $D$ such that $D^\ell = 0$. For example, a $1$-differential is the zero map, and a $2$-differential is an ordinary differential. 
In the ordinary case, the cohomology of a $2$-differential is $\H^*(V,D) = \ker(D)/\im(D)$. There is also a theory of cohomology of $\ell$-differentials for higher $\ell$, which dates as early as~\cite{MR6514}; see the introduction of~\cite{MR3742439} for some history and a number of relevant references. Various authors have tried to define the cohomology of an $\ell$-differential as, for example, $\H^*(V,D) = \ker(D) / \im(D^{\ell-1})$ or $\ker(D^{\ell-1})/\im(D)$, and these definitions are fine for basic purposes. But there is a somewhat richer story, that may be especially entertaining for quantum field theorists.

The idea is the following. A vector space with an $\ell$-differential is equivalently a module for the algebra $\bK[D]/(D^\ell)$. This algebra has $\ell$ indecomposable modules, indexed by their $\bK$-dimensions $1,\dots,\ell$: the one-dimensional module is simple, and the $\ell$-dimensional module is free. Every $\bK[D]/(D^\ell)$-module splits as a direct sum of indecomposable modules. We will say that $D$ is \define{exact} when the module is free, i.e.\ all of its indecomposable summands are $\ell$-dimensional. The {cohomology} of an $\ell$-differential should measure its failure to be exact. Both $\ker(D) / \im(D^{\ell-1})$ or $\ker(D^{\ell-1})/\im(D)$ measure this failure coarsely: those two vector spaces are (noncanonically) isomorphic, and their dimension counts the number of non-free indecomposable summands. But we can measure things more finely, by recording which non-free summands appear.
We will define the {cohomology} $\rH^*(V,D)$ of an $\ell$-differential $D$ on $V$ to be the result of:
\begin{itemize}
  \item Decomposing $(V,D)$ as a direct sum of indecomposable $\bK[D]/(D^\ell)$-modules.
  \item Discarding the free summands.
  \item Converting the other summands into simple objects of a semisimple category.
\end{itemize}
For example, in the ordinary case, there is one non-free indecomposable $\bK[D]/(D^2)$-module, namely the one-dimensional one. Thus the cohomology in our sense is an object of a semisimple category with one simple object, i.e.\ the category of vector spaces.

This procedure is functorial, although it doesn't look so from our description. To make it cleaner, we will use the technology of \define{semisimplification} developed in~\cite{MR1686423,EOsimpl}. Although we care most about the case $\bK = \bF_3$ and $\ell = 3$, we will first tell the story when $\ell$ is not divisible by the characteristic of $\bK$.

\subsection{$\ell$-complexes in characteristic not dividing $\ell$}

The category of $\bK[D]/(D^\ell)$-modules is not naturally monoidal (if $\ell$ is not a power of the characteristic of $\bK$). This is clear already when $\ell=2$: the tensor product of ordinary  complexes is $D(v\otimes w) = D(v) \otimes w + (-1)^{\deg v} v \otimes D(w)$, which requires at least a $\bZ_2$-grading. To correct this, let us say that a (periodic) \define{$\ell$-complex} is a $\bZ_\ell$-graded vector space with an $\ell$-differential that increases the grading by $1$.
By field-extension if necessary, suppose that $\bK$ contains a primitive $\ell$'th root of unity $q$. Then we may define the tensor product of two $\ell$-complexes to be their usual tensor product as $\bZ_\ell$-graded vector spaces, equipped with the differential
$$ D(v\otimes w) = D(v) \otimes w + q^{\deg v} v \otimes D(w).$$
We will write $\cC_q$ for this monoidal category. The choice of $q$ identifies it with the category of modules for the semidirect product Hopf algebra 
\begin{gather*}
 H_q = \bK[D]/(D^\ell) \rtimes \bZ_\ell = \bK\langle D, K\rangle / (D^\ell, K^\ell-1, K D - q D K),\\
 \Delta(K) = K \otimes K, \qquad \Delta(D) = D \otimes 1 + K \otimes D,
\end{gather*}
which is nothing but the upper Borel inside Lusztig's small quantum group for $\mathrm{SL}(2)$.

The monoidal category $\cC_q$ is not braided, but it is spherical, and so has a \define{semisimplification} $\overline{\cC_q}$. 
The defining property of $\overline{\cC_q}$ is that it is the universal semisimple monoidal category receiving a monoidal (but neither left- nor right-exact) functor $\cC_q \to \overline{\cC_q}$. To construct it, one follows~\cite{MR1686423} and defines a monoidal ideal $\cN \subset \cC_q$ of ``negligible morphisms,'' which are those morphisms in the kernel of the trace pairing $\hom(X,Y) \otimes \hom(Y,X) \to \bK$ (which exists in any spherical monoidal category). Then $\overline{\cC_q}$ is defined to be the quotient category $\cC_q / \cN$. Although not obvious, this quotient category is semisimple, and the simple objects are indexed by the indecomposable objects in $\cC_q$ of nonzero quantum dimension.

It is not hard to show that an indecomposable $H_q$-module of $\bK$-dimension $n$ has quantum dimension $[n]_q = (q^n - 1)/(q-1)$. In particular, the objects in the kernel of $\cC_q \to \overline{\cC_q}$ are precisely the modules which are free over $\bK[D]/(D^\ell)$. 
We are therefore justified in using the name ``$\rH^*$'' for the functor $\cC_q \to \overline{\cC_q}$, and calling $\rH^*(V,D)$ the \define{cohomology} of the $\ell$-complex~$(V,D) \in \cC_q$.

When $\bK$ is of characteristic $0$, Theorem~5.2 of~\cite{EOsimpl} identifies the fusion rules for $\overline{\cC_q}$: the fusion ring is isomorphic to the fusion ring of
$$ \cat{Vec}[\bZ_\ell] \boxtimes \cat{Ver}_q \boxtimes \cat{Rep}(\mathrm{PGL}(2)).$$
Here $\cat{Ver}_q$ is the Verlinde category of $\mathrm{SL}(2)$ at level $k = \ell-2$. 
Corollary~5.3 of~\cite{EOsimpl} shows that $\overline{\cC_q}$ does indeed contain $\cat{Ver}_q$ as a subcategory.
The fusion ring for $\cat{Rep}(\mathrm{PGL}(2))$ is the same as that of $\cat{Rep}(\mathrm{OSp}(1|2))$, which also appears as a subcategory of $\cC_q$. Finally, $\cat{Vec}[\bZ_\ell] \subset \overline{\cC_q}$ is spanned by the images of $1$-dimensional $H_q$-modules. It is therefore conjectured in that paper that there is an equivalence of spherical fusion categories
$$ \overline{\cC_q} \overset?\cong \cat{Vec}[\bZ_\ell] \boxtimes \cat{Ver}_q \boxtimes \cat{Rep}(\mathrm{OSp}(1|2)).$$
To check this conjecture requires checking that there are no interesting associators between objects coming from the various tensorands on the right-hand side.

There are two extremal cases of this story. When $\ell=1$, the Hopf algebra $H_q$ is trivial, and $\cC_q = \overline{\cC_q} = \cat{Vec}$. More interesting is the case $\ell = +\infty$. Then by ``primitive $\ell$'th root of unity'' we will mean that $q$ does not solve an algebraic equation with nonnegative  integer coefficients. The Hopf algebra $H_q$ is then simply $\bK\langle D, K^{\pm 1}\rangle / (K D - q D K)$. To impose that $D$ act nilpotently, we will say that an \define{$\infty$-complex} is a direct sum of finite-dimensional $H_q$-modules, and write $\cC_q$ for the category thereof. Again assuming that $\bK$ is of characteristic $0$, $\cC_q$ then contains no objects of zero quantum dimension (since the quantum dimension of any object is a polynomial in $q$ with nonnegative integer coefficients), and the semisimplification $\overline{\cC_q}$ of $\cC_q$ is identified in Proposition~5.1 of~\cite{EOsimpl}:
$$ \overline{\cC_q} \cong \cat{Rep}(\GL_q(2)),$$
where $\GL_q(2)$ is the Drinfeld--Jimbo quantum group. Intriguingly, the right-hand side is braided, even though the left-hand side has no reason to be. Note that when $q=1$,  we recover the symmetric monoidal category of $\GL(2)$-modules. Indeed, $\cC_1$ was the category of finite-dimensional modules for the Borel subgroup $B \subset \GL(2)$, and the passage $B \leadsto \GL(2)$ is an example of the \define{reductive envelope} of~\cite{MR1956434}.

\subsection{$\ell$-complexes in characteristic $\ell$}

Finally, we discuss the case of most importance in this paper, which is when $\ell$ is prime and $\bK$ has characteristic $\ell$. (We care specifically about the case $\ell=3$.) In this case, we do not need any gradings or $q$'s. More precisely, in characteristic $\ell$, the unique primitive $\ell$'th root of unity is $q=1$, because the definition of ``primitive $\ell$'th root'' (when $\ell$ is prime) is ``solution to $(q^\ell - 1)/(q-1) = q^{\ell-1} + \dots + q + 1$,'' which in characteristic $\ell$ factors as $q^{\ell-1} + \dots + q + 1 = (q-1)^{\ell-1}$. As such, the category $\cC_\ell$ of $\bK[D]/(D^\ell)$-modules is symmetric monoidal. 

As above, the free $\bK[D]/(D^\ell)$-module is the only indecomposable of (quantum) dimension~$0$. Thus we are justified in defining the \define{cohomology} of an object $(V,D) \in \cC_\ell$ to be its image under the semisimplification functor $\rH^* : \cC_\ell \to \overline{\cC_\ell}$. The codomain $\overline{\cC_\ell}$ is studied in detail in~\cite{Ostrik2015}. It is called therein the \define{universal Verlinde category} $\cat{Ver}_\ell$, because the fusion ring of $\overline{\cC_\ell}$ is precisely the fusion ring of the Verlinde category for $\mathrm{SL}(2)$ at level $k=\ell-2$. Note that $\cat{Ver}_\ell$ is symmetric monoidal.

(Actually,~\cite{Ostrik2015} uses a different symmetric monoidal structure on the category $\cC_\ell$ of $\bK[D]/(D^\ell)$-modules. The one we are using corresponds to the Hopf structure on $\bK[D]/(D^\ell)$ in which $D$ is primitive, i.e.\ $\Delta D = D\otimes 1 + 1 \otimes D$. But $(D+1)^\ell = D^\ell + 1^\ell = 0+1$ in characteristic~$\ell$, and so as a category $\cC_\ell \cong \cat{Rep}(\bZ_\ell)$; this corresponds to the Hopf structure in which $D+1$ is grouplike, i.e.\ $\Delta D = D\otimes D + D\otimes 1 + 1 \otimes D$. Although these symmetric monoidal structures on $\cC_\ell$ are different, they determine the same fusion rules on the semisimplification~$\overline{\cC_\ell}$. Theorem~1.5 of~\cite{Ostrik2015} then implies that the two versions produce equivalent symmetric monoidal structures on~$\overline{\cC_\ell}$.)

The main examples are the following. When $\ell=2$, $\cat{Ver}_2 = \cat{Vec}$, and we recover the usual cohomology of an ordinary complex. When $\ell = 3$, $\cat{Ver}_3 = \cat{SVec}$ is the category of supervector spaces: the fermionic line $\bK^{0|1} \in \cat{Ver}_3$ is the cohomology of the two-dimensional indecomposable $\bK[D]/(D^\ell)$-module. When $\ell = 5$, $\cat{Ver}_5$ factors as a tensor product $\cat{SVec} \boxtimes \cat{Fib}$, where $\cat{Fib}$ is the \define{Yang--Lee} or \define{Fibonacci} category, a (symmetric, in characteristic $5$) fusion category with simple objects $\{1,X\}$ and fusion rules $X \otimes X = 1 \oplus X$.
In general, when $\ell$ is an odd prime, $\cat{Ver}_\ell$ factors as $\cat{SVec} \boxtimes \cat{Ver}_\ell^+$, where $\cat{Ver}_\ell^+$ is the ``bosonic part'' of $\cat{Ver}_\ell$, and is spanned by the images under $\H^*$ of the indecomposable $\bK[D]/(D^\ell)$-modules of odd $\bK$-dimension.

Let us end by observing the following. Still working in characteristic $\ell$, with $\ell$ an odd prime, let us say that a (nonperiodic) \define{$\ell$-complex} is a $\bZ$-graded vector space equipped with an $\ell$-differential that raises degree by $1$. We will not use any Koszul signs when multiplying $\ell$-complexes: the underlying vector spaces are entirely bosonic. 
Let us decide that an indecomposable $\ell$-complex supported in degrees $m,\dots,m+n$ has \define{spin} the average degree $m+\frac n 2$. This is either integral or half-integral depending on whether the $\bK$-dimension $n+1$ of the complex is odd or even. This spin is additive under tensor product, and so provides a $\frac12\bZ$ grading to the semisimplification of the category of $\ell$-complexes, in which the ``bosonic'' objects are precisely the ones of integral spin, and the ``fermionic'' objects are the ones of half-integral spin.
The factorization $\cat{Ver}_\ell \cong \cat{SVec} \boxtimes \cat{Ver}_\ell^+$ means that these ``fermionic'' objects really are fermionic in the sense of Koszul signs. In this way, the category of $\ell$-complexes secretly knows about the Koszul sign rules: fermions have ``emerged'' during the passage from the category $\ell$-complexes (a ``UV'' object) to its semisimplification (the ``IR'').

%
%

\newcommand{\etalchar}[1]{$^{#1}$}

\end{document}